\numberwithin{equation}{section}
\numberwithin{theorem}{section}
\numberwithin{corollary}{section}
\numberwithin{proposition}{section}
\numberwithin{conjecture}{section}
\numberwithin{lemma}{section}
\numberwithin{definition}{section}
\numberwithin{remark}{section}
\numberwithin{example}{section}
\newcommand{\nc}{\newcommand}
\nc{\cH}{\mathcal{H}} \nc{\cA}{\mathcal{A}} \nc{\cG}{\mathcal{G}}
\nc{\cC}{\mathcal{C}}
\nc{\cO}{\mathcal{O}}
\nc{\cI}{\mathcal{I}}
\nc{\cB}{\mathcal{B}} \nc{\cY}{\mathcal{Y}} \nc{\cK}{\mathcal{K}}
\nc{\cX}{\mathcal{X}} \nc{\cS}{\mathcal{S}} \nc{\cE}{\mathcal{E}}
\nc{\cF}{\mathcal{F}} \nc{\cZ}{\mathcal{Z}} \nc{\cQ}{\mathcal{Q}}
\nc{\cN}{\mathcal{N}} \nc{\cP}{\mathcal{P}} \nc{\cL}{\mathcal{L}}
\nc{\cM}{\mathcal{M}} \nc{\cR}{\mathcal{R}} \nc{\cT}{\mathcal{T}}
\nc{\cW}{\mathcal{W}} \nc{\cU}{\mathcal{U}} \nc{\cD}{\mathcal{D}}
\nc{\cJ}{\mathcal{J}} \nc{\cV}{\mathcal{V}}
\nc{\fr}{{\rightarrow}}
\nc{\rd}{red.deg}
\newcommand{\Q}{\mathbb{Q}}
\newcommand{\pr}{\mathbb P}
\newcommand{\sym}{\mbox{\upshape{Sym}}}
\newcommand{\rk}{\mbox{\upshape{rank}}}
\newcommand{\surjarrow}{\rightarrow\!\!\!\!\rightarrow}
\newcommand{\kapp}{\mathbb C}
\begin{document}

\title*{Stability conditions and positivity of invariants of fibrations}
\author{M.A. Barja
and L. Stoppino}
\institute{M. A. Barja \at Departament de Matem\`atica  Aplicada I, Universitat Polit\`ecnica de Catalunya, ETSEIB Avda. Diagonal, 08028 Barcelona (Spain) \email{Miguel.Angel.Barja@upc.edu}. Partially supported by MICINN-MTM2009-14163-C02-02/FEDER, MINECO-MTM2012-38122-C03-01 and by Generalitat de Catalunya 2005SGR00557
\and L. Stoppino \at Dipartimento di Scienza ed Alta Tecnologia, Universit\`a dell'Insubria, Via Valleggio 11  Como  (Italy) \email{lidia.stoppino@uninsubria.it}.
Partially supported by PRIN 2009 ``Spazi di moduli e Teoria di Lie'', by FAR 2011 (Uninsubria) and by G.N.S.A.G.A.--I.N.d.A.M.}
%

%
\maketitle

\abstract{We study three methods that prove the positivity of a natural numerical invariant associated to $1$-parameter families of polarized varieties. All these methods involve different stability conditions. In dimension 2 we prove that there is a natural connection between them, related to a yet another stability condition, the linear stability. Finally we make some speculations and prove new results in higher dimension.
}

\section*{Introduction}
The general topic of this paper regards how stability conditions in algebraic geometry imply positivity.
One of the first results in this direction is due to Hartshorne  \cite{HarAmple}: a $\mu$-semistable vector bundle of positive degree over a curve is ample.
Other seminal results  are Bogomolov Instability Theorem \cite{B2} and Miyaoka's Theorem on the nef cone of projective bundles over a curve \cite{miyaoka}. These theorems -not accidentally- are recalled and used in this paper (Theorem \ref{BIT} and Theorem \ref{teomiyaoka}).

\medskip
An important example of this kind of result is provided by the various proofs of  the so-called {\em slope inequality}
for a non-locally trivial relatively minimal fibred surface $f\colon S \longrightarrow B$, with general fibre $F$ of genus $g \geq 2$:
$$K_f^2 \geq 4 \frac{g-1}{g}\chi_f.$$

There are at least 3 different proofs of this result.
One is due  to Cornalba and Harris for the Deligne-Mumford non-hyperelliptic stable case \cite{CH} (generalized to the general case by the second author \cite{LS}), and uses the Hilbert stability of the canonical morphism of the general fibre of $f$. In \cite{bost} Bost proves a similar result assuming  Chow stability.
Although the proofs of Cornalba-Harris and Bost are different, the results are almost identical, being Chow and Hilbert stability very close (Remark \ref{Chow-Hilbert}).
Another proof of the slope inequality, due to  Xiao  \cite{X}, uses  the Clifford Theorem on the canonical system of the general fibre combined with the Harder-Narashiman filtration of the vector bundle $f_*\omega _f$.
A third approach has been introduced more recently by Moriwaki in \cite{Mor1}; this method uses the $\mu$-stability of the kernel of the relative evaluation map $f^*f_*\omega _f \longrightarrow \omega _f$ restricted on the general fibres.
In \cite{A-K} there is a good account of the last two proofs.
Miyaoka's Theorem is a key tool in the proof of Xiao, and Bogomolov Theorem is the main ingredient of Moriwaki's approach.
So we see at least two stabilities conditions involved in the proof of the slope inequality for fibred surfaces: Hilbert (or Chow) stability and $\mu$-stability.

\medskip
In this paper we study  these three methods in a general setting.  Firstly we present them  with arbitrary line bundles -instead of the relative canonical one- and in arbitrary dimension, when possible. Then we make a comparison between them, finding that in dimension 2 there is  a yet another stability condition, the {\em linear stability}, that connects them. Finally  we make some speculations about the higher dimensional case, and we prove a couple of new applications.

\medskip

Let us describe in more detail the contents of the paper.
We consider the following setting.
Let  $f\colon X \longrightarrow B$ a fibred variety,
 ${\cL}$ a line bundle on $X$, and let $\cG\subseteq f_*\cL$ be a subsheaf of rank $r$.
A great deal of the results presented in the paper  are in a more general setting, but let us assume here for the sake of simplicity that the general fibre of $\cG$ is generating
and that $\cL$ is nef.
Following \cite{CH}, we consider the number
$e(\cL, \cG)=rL^n-n \deg\cG (L_{|F})^{n-1}$, which is an invariant of the fibration (Remark \ref{class-invariant}).
We introduce the following notation (Definition \ref{def-f-positivity}): we say that $(\cL, \cG)$ is {\em $f$-positive} when $e(\cL, \cG)\geq 0$.
In the case $n=2$, choosing $\cL=\omega_f$, the slope inequality is equivalent to $f$-positivity of $(\omega_f, f_*\omega_f)$.

The structure of the paper is the following.
In Section \ref{risultati}, after giving the first definitions, we  make some useful computations via the Grothendieck-Riemann-Roch Theorem (Theorem \ref{conto} and Propositions \ref{conto1,5} and \ref{conto2}): the number $e(\cL, \cG)$ appears as the leading term of a polynomial expression  associated to the relative Noether morphism
$$\gamma_h\colon \sym^h \cG\longrightarrow f_*\cL^{\otimes h}, \mbox{ for }h\gg 0.$$
We then give a new elementary proof of a consequence of Miyaoka's result (Theorem \ref{nonfaschifo}): if $\cL$ is nef and $\cG$ is sheaf semistable, then $(\cL,\cG)$ is $f$-positive.
This is the first case we see where a stability condition  implies $f$-positivity.

In Section \ref{METHODS} we describe the three methods, adding here and there some new contribution.
As an illustration we re-prove  along the way the slope inequality for fibred surfaces via the three methods (Examples \ref{slopeCH}, \ref{slopeX}, \ref{slopeM}).
The neat idea would be to extend them so that they all give as an output $f$-positivity of the couple $(\cL, \cG)$, under some suitable assumptions. The Cornalba-Harris and Bost methods are originally stated in the general setting; we present them providing a slight generalization of the first one. They prove $f$-stability with the assumption that the fibre over general
$t\in B$ is Hilbert or Chow semistable together with the morphism defined by the fibre $G_t:=\cG\otimes \mathbb C(t)$ (Theorem \ref{corCH} and Theorem \ref{teobost}).

After discussing these methods, we make in \ref{risultati-hilbert-stability} a digression on some applications that are specific to the Cornalba-Harris method. In particular we give in Proposition \ref{hilb-finito-prop} a  bound on the canonical slope of the fibred surfaces such that the $k$-th Hilbert point of $(F,\omega_F)$ is semistable  for {\em fixed } $k$. This suggests a possible meaningful stratification of the moduli space of curves $\cM_g$.

The method of Xiao was extended in higher dimensions by Konno \cite{konnotrig} and Ohno \cite{ohno}. We give a general compact version (Propositon \ref{xiaoinequality}). Xiao's method does not provide in general $f$-positivity; it gives an inequality between the invariants $L^n$ and $\deg \cG$ that has to be interpreted case by case.

Moriwaki's method is described in \ref{MORIWAKI}. It only works in dimension 2, and it gives $f$-positivity if the restriction of  the kernel sheaf $\ker(f^*\cG\longrightarrow \cL)$ is $\mu$-semistable on the general fibres.
We also provide a new condition for $f$-positivity, independent from the one of the theorem of Moriwaki (Theorem \ref{f-positivity-high-stability}).

It is natural to try and make a comparison between these results, and  between their assumptions: in particular, in the case of fibred surfaces all the three methods work because the canonical system enjoys many different properties or is there a red thread binding the three approaches?
In Section \ref{thread} we study the $2$-dimensional case. It turns out that  there is a yet another stability concept, the {\em linear stability}, playing a central role in all three methods. Indeed, we observe the following:
\begin{itemize}
\item Section \ref{LS&CH}:  linear (semi-)stability can be assumed as hypotesis in the Cornalba Harris method, as it implies Chow (semi-)stability (Mumford and others).
\item Section \ref{LS&X}: linear (semi-)stability is the key assumptions that assures that the method of Xiao produces $f$-positivity.
\item Section \ref{LS&M}: linear (semi-)stability is implied by the stability assumption needed in Moriwaki's method and in a large class of cases is equivalent to it (Mistretta-Stoppino).
\end{itemize}
So the picture goes as as follows:
$$
\xymatrix{
& \framebox{$\begin{matrix} \mbox{Linear stability of} \\(F, G_t) \mbox{ for general }t\end{matrix}$} \ar@{=>}[ld]  \ar@{=>}^{Xiao}[dd] &\\
\mbox{\framebox{$\begin{matrix} \mbox{Chow stability of}\\ (F, G_t) \mbox{ for general }t\end{matrix}$}} \ar@{=>}^{Bost}_{Cornalba-Harris}[rd] &  &
\framebox{$\begin{matrix} \mbox{$\mu$-stability of }\\ \ker(f^*\cG\rightarrow \cL)_{|F}\end{matrix}$} \ar@{=>}[lu]  \ar@{=>}^{Moriwaki}[ld]\\
&\framebox{$f$-positivity} &
}
$$
In Section \ref{LS&X} we also prove some positivity results that can be proved via Xiao's method with weaker assumptions.

Finally in Section \ref{resultsHIGH} we consider the  higher dimensional case.
At this state of art, there is no hope to reproduce in higher dimension the beautiful connection between the three methods described for dimension 2.
First of all, the method of Moriwaki seemingly can not even be extended to dimension higher than $2$ (Remark \ref{bogomolov-high-dim}).
However, we provide some results regarding the other two methods.
Firstly we prove that the hypotesis of linear stability still implies a positivity result via Xiao's method (Proposition \ref{Xiao-high}).
In Section \ref{CH-high}, using known stability results, we can  prove new inequalities for families of abelian varieties  and of K3 surfaces via the Cornalba-Harris and Bost methods.
Moreover, we conjecture a higher-dimensional slope inequality to hold for fibred varieties whose relative canonical sheaf is relatively nef and ample (Conjecture \ref{conj-slope}).
We end the paper with an application of the (conjectured) slope inequality in higher dimension: using the techniques of Pardini \cite{Parda} it is possible to derive from the slope inequality a sharp Severi inequality $K_X^n\geq 2n!\chi(\omega_X)$ for $n$-dimensional varieties with maximal Albanese dimension (Proposition \ref{slope-severi}).
It is worth noticing that in \cite{Severi} the first author proves this Severi inequality, and Severi type inequalities for any nef line bundle, independently of such conjectured slope inequality.


\section{First results}\label{risultati}

\subsection{First definitions and motivation}\label{firstdefinitions}
We work over the complex field.
All varieties, unless differently specified, will be normal and projective.
Given a line bundle $\cL$ on a variety $X$, we call $L$ any (Cartier) divisor associated.
It is possible to develop the major part of the theory for reflexive sheaves associated to Weil $\mathbb{Q}$-Cartier divisors, but in order to avoid cumbersome arguments, we will stitch to this setting.

Let $X$ be a variety of dimension $n$, and  $B$ a smooth projective curve.
Let $f\colon X\longrightarrow B$ be a flat proper morphism with connected fibres.
Throughout the paper we shall call this data $f\colon X\longrightarrow B$ a {\em fibred variety}.

Let $\cL$ be a line bundle on $X$.
The pushforward  $f_*\cL$ is a torsion free coherent sheaf on the base $B$, hence it is locally free because $B$ is smooth 1-dimensional.
Let $\cG\subseteq f_*\cL$ be a subsheaf of rank $r$.
The sheaf $\cG$ defines a family of $r$-dimensional linear systems on the fibres of $f$,
$$G_t:=\cG\otimes \kapp (t)\subseteq H^0(F, {\cal L}_{|F}),$$
where $t\in B$ and $F=f^*(t)$.
Let us recall that the evaluation morphism 
$${\it ev} \colon f^*\cG\longrightarrow \cL$$
is surjective at every point of $X$  if and only if it induces a morphism $\varphi$ from $X$ to the relative projective bundle $\pr:={\mathbb{P}}_B({\cG})$ over $B$

$$
\xymatrix{
X\ar^{\varphi\quad\quad}[r] \ar[d]^f &
{\mathbb{P}}_B({\cG}):={\mathbb{P}}
\ar[dl]^\pi \\
B}
$$

\smallskip

\noindent such that $\cL=\varphi^*(\cO_\pr(1))$.
We will denote the surjectivity condition for $ev$ by saying that the sheaf $\cG$ is {\em generating} for $\cL$.
If ${\it ev}$ is only generically surjective, it defines a rational map $\varphi\colon X\dashrightarrow \pr$.
In this case, let $D$ be the unique  effective divisor such that $f^*\cG \longrightarrow  \cL( - D)$ is surjective in codimension 1. The divisor $D$ is called the {\em fixed locus} of $\cG$ in $X$.
Clearly the evaluation morphism $f^*\cG\longrightarrow \cL(-D)$ is surjective in codimension 1.

Moreover, by Hironaka's Theorem, there exist a desingularization $\nu\colon \widetilde X\longrightarrow X$ and a  morphism $\widetilde\varphi \colon \widetilde X\longrightarrow \pr$ such that  $\widetilde \varphi=\varphi\circ\nu$, and an effective $\nu$-exceptional  divisor  $E$ on  $\widetilde X$ such that
$$\widetilde\varphi^*(\cO_\pr(1))\cong \nu^*(\cL(-D))\otimes\cO_{\widetilde X}(-E).$$
See \cite[Lemma 1.1]{ohno} for a detailed proof of these facts. 
%
Define $\cM:= \widetilde\varphi^*\cO_\pr(1)\subseteq \nu^*\cL$; following \cite{ohno} we call this the {\em moving part} of the couple $(\cL, \cG)$, and we define the {\em fixed part} of $(\cL, \cG)$ on $\widetilde X$ to be $Z:=\nu^* (D)+E$.
Call $\widetilde f:= f\circ \nu$ the induced fibration.
Clearly the evaluation homomorphism $\widetilde f^*\cG\longrightarrow  \cM$ is surjective at every point of $\widetilde X$, i.e. $\cG$ is generating for $\cM$ on $\widetilde X$.

\begin{example}\label{moving-surfaces}
Let $f\colon S\longrightarrow B$ be a fibred surface, assuming for simplicity that $S$ is smooth. Let $\omega_f=\omega_S\otimes f^*\omega_B^{-1}$ be the relative dualizing sheaf of $f$.
Let $g$ be the (arithmetic) genus of the fibres.
The general fibres are smooth curves of genus $g$. Let us assume that $g\geq 2$: then the restriction of $\omega_f$ on the general fibres is ample. Hence the base divisor $D$ is vertical with respect to $f$.
Moreover, the line bundle $\omega_f$ has negative degree only on the $(-1)-$curves contained in the fibres.
So, all the vertical $(-1)-$curves of $S$ are contained in $D$.
It is possible to contract these curves preserving the fibration, and obtaining a unique {\em relatively minimal} fibration associated whose relative dualizing sheaf is $f$-nef.
However, there could still be a divisorial fixed locus, as we see now for the case of nodal fibrations.

Let us suppose that $f$ is a nodal fibration, i.e. that any fibre of $f$ is a reduced curve with only nodes as singularities.
We now describe explicitly the moving and the fixed part of $(\omega_f, f_*\omega_f)$.
Let us first recall the following simple result, that can be found in
\cite[Prop. 2.1.3]{Mor1}. If  $C$ is a nodal curve, the base locus of $\omega_C$ is given by all the disconnecting nodes and all the smooth rational components of $C$ that are attached to the rest of the fibre only by disconnecting nodes; following \cite{Mor1} we call these components of {\em socket type}.

The fixed locus of $(\omega_f, f_*\omega_f)$ is the union $D$ of all components of socket type.
Indeed, by what observed above the evaluation homomorphism $ev\colon f^*f_*\omega_f\longrightarrow \omega_f$ factors through $\omega_f(-D)$. On the other hand, it is easy to verify that the restriction of $\omega_f(-D)$ on any fibre is well defined except that on the disconnecting nodes not lying on components of socket type, so $f^*f_*\omega_f(-D) \longrightarrow \omega_f(-D)$ is surjective in codimension one.

Let  $\nu\colon \widetilde S\longrightarrow S$ be the blow up of all the base points of the map induced by $f_*\omega_f(-D)$; call $E$ the exceptional divisor, and $\widetilde f= f\circ \nu$ the induced fibration on $\widetilde S$.
Then we have that all the components of $E$ are of socket type for the corresponding fibre, and that the union of all the components of socket type of the fibres of $\widetilde f$ is $\widetilde D+E$, where $\widetilde D$ is the inverse image of $D$.
Thus $\widetilde D+E$ is the fixed part of $(\omega_{\widetilde f}, \widetilde f_*\omega_{\widetilde f})$, and the evaluation homomorphism 
$$\widetilde f^*\widetilde f_*\omega_{\widetilde f}(-\widetilde D-E) \longrightarrow \omega_{\widetilde f}(-\widetilde D-E)$$ 
is surjective at every point.
Noting that $\omega_{\widetilde f}\cong \nu^* (\omega_f)\otimes\cO_{\widetilde S}(E)$  (see for instance \cite[Chap.1, Theorem 9.1]{BHPVdV}),  we have that
$$ \omega_{\widetilde f}(-\widetilde D-E)\cong \nu^*(\omega_f)\otimes \cO_{\widetilde S}(-\widetilde D)\cong  \nu^*(\omega_f(-D))\otimes \cO_{\widetilde S}(-E).$$
So  the moving part of $(\omega_f, f_*\omega_f)$ is $\cM\cong \nu^*(\omega_f(-D))\otimes \cO_{\widetilde S}(-E)$.


\end{example}

\medskip

Let us now come to the definition of the main characters of the play.
%
\begin{definition}
With the above notation, define the  {\it Cornalba-Harris invariant}
$$e(\cL,\cG):=r L^n- n\deg \cG (L_{|F})^{n-1},$$
where  $L$ is a divisor such that ${\cal L}\cong {\cal O}_X(L)$, and $F$ is a general fibre.
\end{definition}

\begin{remark}\label{class-invariant}
The number $e(\cL,\cG)$ is indeed invariant by twists of line bundles from the base curve $B$.
Indeed, if $\cA$ is a line bundle on $B$ we have
 $$\rk (\cG\otimes \cA)=\rk \cG=r ,\quad \deg(\cG\otimes \cA)=\deg \cG+r \deg \cA,$$
$$(L +f^*A)^n=L^n+n  \deg \cA L_{|F}^{n-1}, \quad (\cL\otimes f^*\cA)_{|F}\cong \cL_{|F}.$$
It is therefore immediate to verify that $e(\cL\otimes f^*\cA,\cG\otimes \cA)=e(\cL,\cG)$.
\end{remark}

\begin{remark}\label{topselfint}
There is another significant incarnation of the C-H invariant: the number $r^{n-1}e(\cL,\cG)$
is the top self-intersection of the divisor  $r L-\deg\cG F$.
\end{remark}

Let us now consider again a fibred surface $f\colon S\longrightarrow B$ as in Example \ref{moving-surfaces}.
Let $g\geq 2$ be the genus of the fibres and $b$ the genus of the base curve $B$.
The main relative invariants for $f$ are
$K^2_f= K_S^2-8(b-1)(g-1)$ and  $\chi_f=\chi(\cO_S)-\chi(\cO_B)\chi(\cO_F)=\chi(\cO_S)-(g-1)(b-1)$.
By Leray's spectral sequence and Riemann-Roch one sees that $\chi_f=\deg f_*\omega_f$.
The {\em canonical slope} $s_f$ of the fibration is defined as the ratio between $K_f^2$ and $\chi_f$.
The  slope $s_f$ have been extensively studied in the literature (see \cite{X}, \cite{A-K}, \cite{BS1}).


%
%

In a more general setting, given a line bundle $\cL$ on $X$ and a subsheaf $\cG\subseteq f_*\cL$, one can consider, when possible,  the ratio between $L^{n}$ and $\deg\cG$, as follows.

\begin{definition}
With the same notation as above, let us suppose moreover that $\deg \cG>0$.
We define the {\it slope of the couple $({\cal L},\cG)$} as
$$s_f({\cal L},\cG):=\frac{L^n}{{\rm deg}{{\cal G}}}.$$
When $\cG=f_*\cL$, we shall use the notation $s_f(\cL)$.
\end{definition}
There is a rich literature about the search of lower bounds for the slope, in particular about the canonical one. The most general result is the following (see \cite{Barja3folds}).
\begin{proposition}
Assume that $\cL$ and $f_*{\cL}$ are nef. Then $s_f(\cL) \geq 1$.
\end{proposition}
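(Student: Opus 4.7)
\textit{Plan.}
The strategy is to interpret the intersection number $L^n$ via the relative projective bundle $\pi\colon \mathbb{P} := \mathbb{P}_B(\cG) \to B$, where $\cG := f_*\cL$, together with its tautological line bundle $H := \cO_\mathbb{P}(1)$. Since $\cG$ is nef on the smooth curve $B$, Theorem~\ref{teomiyaoka} yields that $H$ is nef on $\mathbb{P}$. Moreover, $B$ being one-dimensional forces $c_i(\cG) = 0$ for $i \geq 2$, and the standard Chern class relation on a projective bundle collapses to
$$H^r \;=\; \pi^* c_1(\cG) \cdot H^{r-1} \;=\; \deg \cG, \qquad r := \rk \cG.$$

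Let $\varphi\colon X \dashrightarrow \mathbb{P}$ be the rational map induced by the evaluation $f^*\cG \to \cL$. Passing to a resolution as in the construction of the moving part gives a morphism $\tilde\varphi\colon \widetilde X \to \mathbb{P}$ and $\nu^*\cL = \cM + Z$, with $\cM = \tilde\varphi^* H$ nef and $Z$ effective. Using the telescoping identity
$$(\nu^*L)^n - M^n \;=\; Z \cdot \sum_{k=0}^{n-1} (\nu^*L)^{n-1-k} M^k,$$
together with the nefness of both $\nu^*L$ and $M$ and the effectivity of $Z$ (each summand restricts on the components of $Z$ to a top-intersection of nef classes), yields $L^n \geq M^n$. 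It therefore suffices to prove $M^n \geq \deg \cG$. By the projection formula, $M^n = H^n \cdot \tilde\varphi_*[\widetilde X]$. Assuming the natural additional hypothesis that $\cL$ is $f$-big (so that $\tilde\varphi$ is generically finite onto its image $V$ of dimension $n$, with generic degree $d \geq 1$), this reduces to showing $d(H|_V)^n \geq \deg \cG$.

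The surjective case $V = \mathbb{P}$ (i.e.\ $n = r$) is immediate: $d(H|_V)^n = d \cdot H^r = d\deg \cG \geq \deg \cG$. The main obstacle is the non-surjective case $n < r$. Decompose
$$[V] \;=\; a H^{r-n} + b F H^{r-n-1} \quad \text{in } A^*(\mathbb{P}), \qquad F := \pi^*[\mathrm{pt}].$$
Intersecting with $F$ (using $F^2 = 0$) identifies $a$ with the degree $\deg V_t \geq 1$ of a general fibre of $V \to B$ inside $\mathbb{P}^{r-1}$, and intersecting with $H^n$ gives $(H|_V)^n = a \deg \cG + b$. Thus one must bound the ``fibre-coefficient'' $b$ from below; the critical case $a = d = 1$ requires $b \geq 0$. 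I would tackle this via the Fulton--Lazarsfeld positivity of Schur polynomials in the Chern classes of the nef bundle $\cG$, applied to the effective cycle $[V] \subset \mathbb{P}_B(\cG)$.
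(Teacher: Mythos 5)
First, a point of comparison: the paper does not actually prove this proposition — it is quoted from \cite{Barja3folds} — so there is no internal argument to measure yours against, and it has to stand on its own. Your reductions are correct as far as they go: $H^r=\deg\cG$ on $\pr_B(\cG)$, nefness of $H$ from nefness of $f_*\cL$ (though this is the definition of a nef bundle rather than a consequence of Theorem~\ref{teomiyaoka}), and the telescoping argument giving $L^n\geq M^n=d\,H^n\cdot[V]$. But note that you have silently strengthened the hypotheses by assuming $\cL$ is $f$-big so that $\widetilde\varphi$ is generically finite; some such assumption is genuinely needed (for $\cL=f^*\cB$ with $\cB$ ample one has $L^n=0$ while $\deg f_*\cL=\deg\cB>0$, so the literal statement fails), but as written your argument simply gives nothing when $M^n=0$, and you neither prove the remaining cases nor flag the statement as requiring the extra hypothesis.

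The more serious problem is that the case $n<r$ — which is the entire content of the proposition — is left as a plan, and the tool you propose cannot close it. Fulton--Lazarsfeld positivity applied to the effective cycle $[V]\subset\pr_B(\cG)$ uses only that $\cG$ is nef and $[V]$ is effective, and that is not enough to give $H^n\cdot[V]\geq\deg\cG$: take $\cG=\cO_B(p)\oplus\cO_B^{\oplus(r-1)}$ and $V=\pr_B(\cO_B^{\oplus n})\subset\pr_B(\cG)$, an effective $n$-dimensional subvariety dominating $B$ with $H^n\cdot V=0<1=\deg\cG$; in your notation $a=1$ and $b=-1<0$. What excludes this for the actual image $V=\widetilde\varphi(\widetilde X)$ is that its general fibre $V_t\subset\pr^{r-1}$ is \emph{non-degenerate}, because $G_t=H^0(F,\cL_{|F})$ is the complete linear system; this forces $a=\deg V_t\geq r-n+1$ (degree at least codimension plus one for an irreducible non-degenerate variety), so in particular your ``critical case $a=d=1$ with $n<r$'' never occurs. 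That non-degeneracy, which your plan never invokes, is the essential geometric input, and it still has to be combined with a genuine argument (either careful intersection-theoretic bookkeeping on $\pr_B(\cG)$, or the Harder--Narasimhan/Xiao machinery of Section~\ref{XIAO}, which is how inequalities of this type are proved in \cite{Barja3folds}) to control the coefficient $b$. As it stands, the proof has a gap precisely at the step where the statement becomes nontrivial.
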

This bound is attained by a projective bundle on $B$ and its tautological line bundle.

\begin{remark}
The slope is {\em not} invariant by twists of line bundles.
Indeed, let $F=f^*(t)$ be a general fibre, and $G_t:=\cG\otimes \kapp (t)\subseteq H^0(F, {\cal L}_{|F}).$
Attached to the triple $(f,{\cal G},{\cal L})$ a $\it natural$ ratio appears, which depends on the geometry of the triple $(F, G_t, {\cal L}_{|F})$.
Indeed, consider the line bundle ${\cal L}(kF)$ obtained by ``perturbing'' ${\cal L}$ with $kF$ for $k \in \mathbb{N}$, and the corresponding perturbed sheaf $ {\cal G}\otimes {\cal O}_B(kt)\subseteq f_*({\cal L}(kF))\cong f_*\cL\otimes \cO_B(kt)$. Then we have that
     $$s_f(\cL, \cG)(k):=s_f({\cal L}(kF),{\cal G}\otimes {\cal O}_B(kt))=\frac{(L+kF)^n}{\deg{\cal G}\otimes {\cal O}_B(kt)}=\frac{L^n + kn(L_{|F})^{n-1}}{{\deg}({\cal G})+k\, \rk {\cal G}}.$$
Hence
$$\lim_{k\rightarrow \infty}s_f(\cL, \cG)(k)= n \frac{(L_{|F})^{n-1}}{\rk {\cal G}}.$$
This asymptotic ratio is related to  $e(\cL, \cG)$ as follows; we have that
\begin{equation}\label{asymptotic}
s_f(\cL,\cG)\geq n \frac{(L_{|F})^{n-1}}{\rk {\cal G}}\iff e(\cL,\cG)\geq 0.
\end{equation}
The positivity of the Cornalba-Harris invariant thus coincides with this natural bound on $s_f(\cL, \cG)$.
\end{remark}

\begin{remark}\label{slopeinequality}
Let us consider inequality (\ref{asymptotic}) in the case of a fibred  surface   $f\colon S \longrightarrow B$ of genus $g\geq 1$. It becomes
$$K_f^2 \geq 2 \frac{\deg\omega_F}{\rk f_*\omega_f}\deg f_*\omega_f = 4 \frac{g-1}{g}\chi_f.$$
This bound is the famous {\em slope inequality} for fibred surfaces mentioned in the introduction.
It holds true for non locally trivial relatively minimal fibred surfaces of genus $g\geq 2$ (\cite{CH} and \cite{LS}, \cite{X}, \cite{Mor1}).

The case of surfaces allows us to single out some positivity conditions on the family that seem to be necessary in general.
\begin{itemize}
\item the genus $g$ of the fibration is $\geq 2$ $\iff$ $\omega_f$ is ample on the general fibres of $f$;
\item $f$ is non-locally trivial $\iff$ $\chi_f>0$;
\item $f$ is relatively minimal $\iff$ the divisor $K_f$ is nef (Arakelov).
\end{itemize}
In particular, if the fibration is not relatively minimal, the slope inequality is easily seen to be false.
We see that indeed in order to prove the positivity of $e(\cL, \cG)$ we will often need similar conditions, in particular the relative nefness of $\cL$.
In \ref{CH-high}  we conjecture and discuss a natural slope inequality in higher dimension.\end{remark}


By now we have seen how the condition of positivity of $e(\cL,\cG)$ is very natural and produces significant bounds for the geometry of the fibration.
We shall thus give a name to this phenomenon:
\begin{definition}\label{def-f-positivity}
The couple $(\cL,\cG)$ is said to be {\em $f$-positive} (resp. {\em strictly $f$-positive}) if $e(\cL,\cG)\geq 0$ (resp. $>0$).
\end{definition}



\subsection{Some intersection theoretic computations}\label{firstresults}

As above, let $f\colon X\longrightarrow B$ be a fibred variety over a curve $B$.
Let $\cL$ be a line bundle on $X$ and $\cG\subseteq f_*\cL$ a subsheaf of rank $r$.
Consider the natural morphism of sheaves
$$
\gamma_h \colon \sym^h \cG \longrightarrow f_*\cL^{\otimes h},
$$
for $h\geq 1$.
The fibres of this morphism on general $t\in B$ are just the multiplication maps
$$
\gamma_h \otimes \kapp(t) \colon \sym^h G_t =H^0(\pr^{r-1}, \cO_{\pr^{r-1}}(h)) \longrightarrow H^0(F, \cL_{|F}^{\otimes h}),
$$
where  $F=f^*(t)$ and $G_t=\cG\otimes\kapp(t)\subseteq H^0(F,\cL_{|F})$.
Call $\cG_h$ the image sheaf, and $\cK_h$ the kernel of $\gamma_h$.
If $ \cG$ is relatively ample then  for $h\gg 0$  we have that $\cG_h=f_*\cL^{\otimes h}$ and that $\cK_h$ is just $\cI_{X/\pr}(h)$, the ideal sheaf of the image of $X$ in the relative projective space $\pr$ twisted by $\cO_{\pr}(h)$.

\begin{remark}\label{immagine}
Suppose now that $\cG$ is generating.
Let $\overline X:=\varphi(X)\stackrel{j}{\hookrightarrow}  \pr$ be the image of $X$, let $\overline f\colon \overline X\longrightarrow B$ the induced fibration, and let  $\overline \cL=j^*(\cO_\pr(1))$. Then if $\alpha\colon X\rightarrow \overline X$ is the restriction of $\varphi$, we have that $\cL=\alpha^*\overline \cL$.
Clearly, for $h\gg0$ the sheaf $\cG_h$ coincides with $\overline f_*\overline\cL^{\otimes h}$, and $\cK_h $ with $\cI_{\overline X/\pr}(h)$.
\end{remark}

Let us recall that the {\em slope}\footnote{Unfortunately this crash of terminology seems unavoidable, as both the notations are well established.} of a vector bundle $\cF$ on a smooth curve $C$ is the following rational number $\mu(\cF)=\deg\cF /\rk(\cF)$.

\begin{remark}\label{upperbound}
Note that $f$-positivity is equivalent to an upper bound on the slope of the sheaf $\cG$, namely
$$\mu(\cG)\leq \frac{L^n}{n(L_{|F})^{n-1}}.$$
\end{remark}
We can now prove a  simple condition for $f$-positivity.

\begin{theorem}
Suppose that there exists an integer $m\geq 1$ such that
\begin{itemize}
\item[(i)]  the couple $(\cL^{\otimes m}, \cG_m)$ is $f$-positive;
\item[(ii)] $m \mu (\cG)\leq \mu(\cG_m).$
\end{itemize}
Then $(\cL, \cG)$ is $f$-positive.
\end{theorem}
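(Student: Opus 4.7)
The plan is to translate everything into the slope form of $f$-positivity provided by Remark \ref{upperbound} and then chain two inequalities.

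First I would note that $f$-positivity of $(\cL^{\otimes m},\cG_m)$ is, by Remark \ref{upperbound} applied to the line bundle $\cL^{\otimes m}$, equivalent to
\[
\mu(\cG_m)\leq \frac{(mL)^n}{n((mL)_{|F})^{n-1}} = m\cdot\frac{L^n}{n(L_{|F})^{n-1}},
\]
since the numerator scales as $m^n$ and the denominator as $m^{n-1}$. Thus hypothesis (i) is exactly
\[
\frac{\mu(\cG_m)}{m}\leq \frac{L^n}{n(L_{|F})^{n-1}}.
\]

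Next I would use hypothesis (ii), namely $m\mu(\cG)\leq \mu(\cG_m)$, which rewrites as $\mu(\cG)\leq \mu(\cG_m)/m$. Combining the two displayed inequalities gives
\[
\mu(\cG)\leq \frac{\mu(\cG_m)}{m}\leq \frac{L^n}{n(L_{|F})^{n-1}},
\]
which, again by Remark \ref{upperbound}, is the definition of $f$-positivity of $(\cL,\cG)$.

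The only subtlety is that Remark \ref{upperbound} implicitly requires $(L_{|F})^{n-1}>0$ in order to divide by it, but this is harmless in the working setting (where $\cL$ is nef and $\cG$ generating on the general fibre, so the image of $\varphi_{|F}$ is positive-dimensional). There is no real obstacle here: once the two hypotheses are recast via the Cornalba--Harris invariant's reinterpretation as a slope bound, the statement is a one-line transitivity of inequalities.
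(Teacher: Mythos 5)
Your proof is correct and is essentially the paper's own argument: the paper likewise reads hypothesis (i) as the slope bound $\mu(\cG_m)\leq mL^n/(nL_{|F}^{n-1})$ and chains it with (ii). You have merely written out the $m^n/m^{n-1}$ scaling and the appeal to Remark \ref{upperbound} explicitly, which the paper leaves implicit.
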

\begin{proof}
Assumption $(i)$  tells us that
$$
\mu(\cG_m)\leq \frac{m L^{n}}{n L_{|F}^{n-1}}.
$$
which, combined with  $(ii)$, gives the desired inequality.
\end{proof}

We see below that the C-H class appears naturally as the leading term of the expression
$$r \deg \cG_h-h \deg \cG\rk \cG_h$$ when computed as a polynomial in $h$.
This produces the following condition for $f$-positivity in terms of the slope of $\cG$ and of the one of $\cG_h$.

\begin{theorem}\label{conto}
With the above notation,
suppose that the sheaf $\cG\subseteq f_*\cL$ is generating and such that the morphism $\varphi$ it induces is generically finite on its image.

Then the following implications hold
\begin{itemize}
\item[(1)] If $\mu(\cG_h)\geq h\mu(\cG)$ for infinitely many $h> 0$, then $(\cL,\cG)$ is $f$-positive.
\item[(2)] If $(\cL,\cG)$ is strictly $f$-positive, then $\mu(\cG_h)\geq h\mu(\cG)$  for $h\gg 0$.
\end{itemize}
\end{theorem}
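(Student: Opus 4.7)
The key idea is that the quantity whose sign we are analysing,
\[
P(h) := r \deg \cG_h - h\deg\cG\,\rk\cG_h,
\]
is an integer-valued expression that (after dividing by the ranks) equals $r\cdot\rk\cG_h\bigl(\mu(\cG_h) - h\mu(\cG)\bigr)$, so its sign controls the inequality in the statement. My plan is to show that for $h\gg 0$, $P(h)$ is a polynomial of degree $n$ whose leading coefficient is a positive multiple of $e(\cL,\cG)$; both (1) and (2) then fall out immediately by looking at the leading term.

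\textbf{Reduction to the image.} Since $\cG$ is generating and the induced $\varphi\colon X\to\pr$ is generically finite onto its image, Remark \ref{immagine} lets us replace $\cG_h$ by $\overline f_*\overline\cL^{\otimes h}$ for $h\gg 0$, where $\overline f\colon\overline X\to B$ is the image fibration, $\overline\cL=j^*\cO_\pr(1)$ is $\overline f$-very ample, and $\alpha\colon X\to\overline X$ has finite degree $d:=\deg\alpha>0$. The projection formula for $\alpha$ on both the total space and the general fibre gives
\[
L^n = d\,\overline L^n,\qquad (L_{|F})^{n-1}=d\,(\overline L_{|\overline F})^{n-1},
\]
so $e(\cL,\cG)=d\cdot e(\overline\cL,\cG)$.

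\textbf{Asymptotic estimates.} Because $\overline\cL$ is $\overline f$-very ample, $R^i\overline f_*\overline\cL^{\otimes h}=0$ for $i>0$ and $h\gg 0$; combined with base change and Serre vanishing on the general fibre, I get
\[
\rk\cG_h = h^0(\overline F,\overline\cL_{|\overline F}^{\otimes h}) = \frac{(\overline L_{|\overline F})^{n-1}}{(n-1)!}h^{n-1}+O(h^{n-2})
\]
by asymptotic Riemann–Roch on $\overline F$. Leray plus Riemann–Roch on $B$ give
\[
\chi(\overline X,\overline\cL^{\otimes h})=\chi(B,\overline f_*\overline\cL^{\otimes h})=\deg\cG_h+\rk\cG_h(1-b),
\]
so using asymptotic Riemann–Roch on $\overline X$:
\[
\deg\cG_h = \frac{\overline L^n}{n!}h^n + O(h^{n-1}).
\]

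\textbf{Leading term.} Substituting into $P(h)$ and collecting,
\[
P(h) = \frac{1}{n!}\bigl[r\overline L^n - n\deg\cG\,(\overline L_{|\overline F})^{n-1}\bigr]h^n+O(h^{n-1}) = \frac{e(\cL,\cG)}{d\cdot n!}h^n+O(h^{n-1}),
\]
using the projection formula identity above for the last equality.

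\textbf{Conclusions.} For (1): if $P(h)\geq 0$ for infinitely many $h$, the leading coefficient $e(\cL,\cG)/(d\cdot n!)$ cannot be negative, hence $e(\cL,\cG)\geq 0$. For (2): if $e(\cL,\cG)>0$ the leading coefficient is positive, so $P(h)>0$ for all $h\gg 0$, giving $\mu(\cG_h)\geq h\mu(\cG)$ for $h\gg 0$.

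The only delicate point is the asymptotic computation of $\deg\cG_h$, which hinges on the vanishing $R^i\overline f_*\overline\cL^{\otimes h}=0$ for $h\gg 0$ and on the projection formula holding for the generically finite map $\alpha$; both are standard once one passes to the image $\overline X$, so no real obstacle arises beyond carefully tracking the factor $d$ coming from the degree of $\alpha$.
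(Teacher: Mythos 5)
Your proposal is correct and follows essentially the same route as the paper: reduce to the image fibration $\overline X$ via Remark \ref{immagine}, use Serre vanishing plus (Grothendieck–)Riemann–Roch to identify the leading coefficient of $r\deg\cG_h - h\deg\cG\,\rk\cG_h$ as $e(\cL,\cG)/(d\cdot n!)$, and read off both implications from the sign of that leading term. The only cosmetic difference is that you assemble $\deg\cG_h$ from Leray and asymptotic Riemann–Roch on $\overline X$ and $B$ rather than quoting Grothendieck–Riemann–Roch for $\overline f$ directly, which amounts to the same computation.
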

\begin{proof}
As in Remark \ref{immagine}, let $\overline X:=\varphi(X)\stackrel{j}{\hookrightarrow}  \pr$ be the image of $X$, let $\overline f\colon \overline X\longrightarrow B$ be the induced fibration, and let  $\overline \cL=j^*(\cO_\pr(1))$.
As observed in the remark,  the sheaf $\cG_h$ coincides with $\overline f_*\overline \cL^{\otimes h}$ for $h\gg 0$.
By Grothendieck-Riemann-Roch theorem  we have that
$$\deg \cG_h=\deg{\overline f_* \overline \cL^{\otimes h}}=h^n\frac{(\overline L)^n}{n!}+  \sum_{i\geq 1}(-1)^{i+1}\deg R^i\overline f_*\overline \cL^{\otimes h} +\cO(h^{n-1}),$$
and that
$$\rk \cG_h=\rk \overline f_*\overline \cL^{\otimes h}=h^0(F, \overline \cL_{|F}^{\otimes h})=$$$$=h^{n-1}\frac{(\overline L_{|F})^{n-1}}{(n-1)!}+\sum_{i\geq 1}(-1)^ih^i(F,\overline \cL_{|F}^{\otimes h})+ \cO(h^{n-2}).$$
Moreover, $\cG$ is relatively very ample as a subsheaf of $\overline f_*\overline\cL$, and so by Serre's vanishing theorem
$\deg R^i\overline f_*\overline \cL^{\otimes h} =0 $ and $h^i(F, \overline \cL_{|F}^{\otimes h})=0$ for $h\gg0$, and $i\geq 1$.
By the assumption, the map $\alpha \colon X\longrightarrow \overline X$ is generically finite of degree say $d$. Hence
$$
L^n=(\alpha^*\overline L)^n=d (\overline L)^n \,\,\mbox{ and} \quad(\overline L_{|F})^{n-1}=(\alpha^*\overline L_{|F})^{n-1}=
d(\overline L_{|F})^{n-1}.
$$
Putting all together, we have
\begin{equation}\label{GRR}
\begin{array}{ll}
\displaystyle{ \rk \cG \deg \cG_h-h \deg \cG\rk \cG_h}& =\displaystyle{\frac{h^n}{d(n!)}\left(\rk \cG L^n - \deg \cG L_{|F}^{n-1}\right)+\cO(h^{n-1})=}\\
&\\
 &=\displaystyle{\frac{h^n}{d(n!)}e(\cL,\cG)+\cO(h^{n-1}).}\\
\end{array}
\end{equation}
So, if we have that $\mu(\cG_h)\geq h\mu(\cG)$ for infinitely many  $h>0$, then the leading term of $\rk \cG \deg \cG_h-h \deg \cG\rk \cG_h$ as a polynomial in $h$ must be non-negative (in  particular inequality $\mu(\cG_h)\geq \mu(\cG)$ is satisfied for $h\gg 0$).
Vice-versa, if the leading term is strictly positive, then $\mu(\cG_h)\geq h\mu(\cG)$ for $h\gg0$.
\end{proof}

\begin{remark}\label{terminedopo}
If  we have that $e(\cL, \cG) $ is zero, then of course we cannot conclude that
$$\rk \cG \deg \cG_h-h \deg \cG\rk \cG_h\geq 0 \,\,\mbox{ for } \,\,h\gg 0.$$
However, we can in this case consider the term in $h^{n-1}$, which is
$$\frac{h^{n-1}}{(n-1)!}\left( (n-1)\deg \cG L_{|F}^{n-2}K_F- L^{n-1}K_f\rk \cG\right).$$
Using the equality $\rk \cG L^n =n \deg \cG L_{|F}^{n-1}$, this term becomes
$$
\frac{r h^{n-1}}{(n-1)!}\left( \frac{n-1}{n}\frac{L_{|F}^{n-2}K_F}{L_{|F}^{n-1}}L^n- L^{n-1}K_f\right).
$$
Note that in case $\cL=\omega_f$ we obtain $-\frac{1}{n} K_f^n$, so that we can observe that
if $K_f^n>0$ and $\omega_f$ is ample on the general fibres, then if $\mu(\cG_h)\geq h\mu(\cG)$  for infinitely many $h> 0$, $(\omega_f,f_*\omega_f)$ is  strictly $f$-positive.
\end{remark}

\begin{remark}\label{bah}
We can observe the following.
Consider the function $\psi(h):=\mu(\cG_h)/h$, and assume the same hypotesis as Theorem \ref{conto}.
Then, by the very same computations contained in the proof of Theorem \ref{conto}, we see that
$$\
\lim_{h\to\infty}\psi(h)= \frac{L^n}{n (L_{|F}^{n-1})}.
$$
Moreover observe that, for any $h\geq 1$
$$
(\cL^{\otimes h}, \cG_h) \mbox{ is $f$-positive }\iff \psi(h)\leq \frac{L^n}{n (L_{|F}^{n-1})}.
$$
Theorem \ref{conto} can thus be rephrased as the following behavior of the function $\psi$.
\begin{itemize}
\item[(1)] If $\psi (h)\geq \psi (1)$ for infinitely many $h$, then $\psi(1)\leq L^n/(nL_{|F}^{n-1})$.
\item[(2)] If $\psi(1)<  L^n/(nL_{|F}^{n-1})$, then $\psi (h)\geq \psi (1)$  for $h\gg 0$.
\end{itemize}

\end{remark}

\medskip

We state now a couple of results along the lines of Theorem \ref{conto}, when we  weaken as much as possible the assumptions needed in order to obtain $f$-positivity.

\begin{proposition}\label{conto1,5}
With the same notation as above, suppose that  the line bundle $\cL$ is nef on $X$ and that the base locus of $\cG$ is concentrated on fibres.

If $\mu(\cG_h)\geq h\mu(\cG)$ for infinitely many $h$, then $(\cL,\cG)$ is $f$-positive.
\end{proposition}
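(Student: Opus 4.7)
The plan is to reduce to the generating, generically finite case handled by Theorem~\ref{conto} by passing to the desingularization $\nu\colon \widetilde X \to X$ described in Section~\ref{firstdefinitions}. Write $\widetilde f = f\circ \nu$, let $\cM \subseteq \nu^*\cL$ be the moving part, $Z = \nu^* D + E$ the fixed part, and $\widetilde\varphi\colon \widetilde X \to \pr$ the morphism defined by $\cG$ through $\cM$. The hypothesis that the base locus of $\cG$ is concentrated on fibres forces $D$, and also the $\nu$-exceptional divisor $E$ (which lies above the non-divisorial part of the base locus), to be vertical, so $Z$ is contained in fibres of $\widetilde f$; moreover $\cM$ is $\widetilde f$-generated by $\widetilde f^*\cG$.

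The first step is to compare the numerical data of $\cL$ and $\cM$. Since $Z$ restricts to zero on a general fibre $F$, one has $M_{|F}^{n-1} = L_{|F}^{n-1}$. For top self-intersections, writing $\nu^*L = M + Z$ and telescoping gives
$$L^n - M^n = Z\cdot\sum_{k=0}^{n-1}(\nu^*L)^k M^{n-1-k}.$$
Each irreducible component $Z_i$ of $Z$ sits inside some fibre $\widetilde f^{-1}(b)$, where $\cM$ is globally generated, hence nef, while $\nu^*\cL$ is nef globally. The top intersection on the $(n-1)$-dimensional cycle $Z_i$ of a product of nef line bundles is therefore non-negative, so $L^n \geq M^n$. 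This is the most delicate point, because $\cM$ need not be nef globally on $\widetilde X$ — it is the verticality of $Z$ that lets one get away with only the relative nefness of $\cM$.

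The second step splits into two cases according to whether $\widetilde\varphi$ is generically finite onto its image. When it is, the Grothendieck--Riemann--Roch computation from the proof of Theorem~\ref{conto} applies on $\widetilde X$ to the triple $(\widetilde f, \cM, \cG)$ essentially verbatim, producing
$$r\deg \cG_h - h\deg \cG\,\rk \cG_h = \frac{h^n}{d_0\,n!}\bigl(rM^n - n\deg \cG\, M_{|F}^{n-1}\bigr) + O(h^{n-1}),$$
where $d_0$ is the degree of $\widetilde\varphi$ onto its image. The hypothesis $\mu(\cG_h)\geq h\mu(\cG)$ for infinitely many $h$ forces the leading coefficient to be non-negative; combined with $L^n\geq M^n$ and $L_{|F}^{n-1} = M_{|F}^{n-1}$, this gives $e(\cL,\cG)\geq 0$. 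When instead $\widetilde\varphi$ is not generically finite, the image of a general fibre under $\widetilde\varphi$ has dimension strictly less than $n-1$, so $M_{|F}^{n-1} = 0 = L_{|F}^{n-1}$, and then $e(\cL,\cG) = rL^n\geq 0$ follows at once from the nefness of $\cL$.
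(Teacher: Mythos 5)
Your proof is correct and follows essentially the same route as the paper: pass to the moving part $\cM$ on the desingularization, run the Grothendieck--Riemann--Roch computation of Theorem~\ref{conto} for $(\widetilde f,\cM,\cG)$, and transfer the conclusion back to $\cL$ via $M_{|F}^{n-1}=L_{|F}^{n-1}$ and $L^n\geq M^n$. The only real difference is that where the paper simply asserts that $\cM$ is nef to get $L^n\geq M^n$, your telescoping of $L^n-M^n$ uses only the nefness of $\nu^*\cL$, the verticality of $Z$, and the global generation of $\cM$ on fibres --- a more careful justification --- and similarly in the non-generically-finite case your conclusion $e(\cL,\cG)=rL^n\geq 0$ (from $L_{|F}^{n-1}=0$ and nefness of $\cL$) is the accurate version of the paper's claim that $e(\cL,\cG)=0$.
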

\begin{proof}
If the map $\varphi$ induced by $\cG$ is not generically finite on its image then $e(\cL, \cG)=0$, hence $f$-positivity is trivially satisfied. If on the contrary $\varphi$ is finite on its image, we can apply Theorem \ref{conto} using, instead of $\cL$, the moving part of $(\cL, \cG)$
$$\cM=\nu^*(\cL(-D))\otimes\cO_{\widetilde X}(-E),$$
where we follow the notation of Section \ref{risultati}.
Let $M$ be a divisor associated to $\cM$.
By Theorem \ref{conto}, we have that the assumption $\mu(\cG_h)\geq h\mu(\cG)$ for $h\gg 0$ implies that $(\cM, \cG)$ is $f$-positive, so that $M^n\geq n\mu(\cG)(M_{|F})^{n-1}$.
By the assumption on the base locus of $\cG$, we have that $M_{|F}\sim L_{|F}$. Moreover, as $\cL$ and $\cM$ are nef and $\cM$ is $\cL$ minus an effective divisor, we have that $L^n\geq M^n$.
Summing up, we have $L^n-n\mu(\cG)L_{|F}^{n-1}\geq M^n-n\mu(\cG)(M_{|F})^{n-1}\geq 0$, and so we are done.
\end{proof}

\begin{remark}\label{nef-rel-nef}
It is worth noticing that in the statement of Proposition \ref{conto1,5} above, we could replace the assumption of $\cL$ being nef with $\cL$ being {\em relatively} nef.
Indeed, as $e(\cL, \cG)$ is invariant by twists with pullback of line bundles on the base (Remark \ref{class-invariant}), we can always replace a relatively nef line bundle with a nef one, by twisting with the pullback of a sufficiently ample line bundle on $B$.
\end{remark}

\begin{proposition}\label{conto2}
With the same notation as above, suppose that

($\star$) for $h\gg 0$ and $i\geq 1$
 $\deg R^if_*\cL^{\otimes h}=\cO(h^{n-1})$
and   $h^i(F, \cL_{|F}^{\otimes h})=\cO(h^{n-2})$.

Suppose moreover that one of the following conditions hold
\begin{itemize}
\item[(a) ]  the sheaf $\cG\subseteq f_*\cL$ is normally generated for general $t\in B$;
\item[(b) ] the sheaf $f_*\cL^{\otimes h}$ is nef
for $h\gg 0$.
\end{itemize}
Then if $\mu(\cG_h)\geq h\mu(\cG)$  for infinitely many $h>0$, then $(\cL,\cG)$ is $f$-positive.
\end{proposition}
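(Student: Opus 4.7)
The plan is to mimic the proof of Theorem~\ref{conto}, but to apply Grothendieck--Riemann--Roch directly to $f$ rather than to the morphism $\overline f$ on the image of $\varphi$. This avoids the assumption that $\varphi$ be generically finite on its image, at the price of needing the auxiliary vanishing ($\star$). I would first combine the Leray spectral sequence with asymptotic Riemann--Roch on $X$ and on a general fibre $F$, and use ($\star$) to discard the contributions of the higher direct images (of order $\cO(h^{n-1})$) and of the higher cohomology on fibres (of order $\cO(h^{n-2})$), obtaining
$$
\deg f_*\cL^{\otimes h}=\frac{h^n}{n!}L^n+\cO(h^{n-1}),\qquad \rk f_*\cL^{\otimes h}=\frac{h^{n-1}}{(n-1)!}(L_{|F})^{n-1}+\cO(h^{n-2}).
$$

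The heart of the proof is then to establish, in each of (a) and (b), the comparisons $\deg\cG_h \le \deg f_*\cL^{\otimes h}$ and $\rk\cG_h = \rk f_*\cL^{\otimes h}+\cO(h^{n-2})$. In case (a), normal generation of $\cG$ at a general $t\in B$ makes the multiplication $\sym^h G_t\to H^0(F,\cL_{|F}^{\otimes h})$ surjective for $h\gg 0$, so $\cG_h\subseteq f_*\cL^{\otimes h}$ has full generic rank and the cokernel is torsion, giving both claims at once. In case (b), the quotient $f_*\cL^{\otimes h}/\cG_h$ is a quotient of a nef bundle on the smooth curve $B$, hence itself nef and thus of non-negative degree, yielding $\deg\cG_h \le \deg f_*\cL^{\otimes h}$; to handle the rank part one replaces $\cG$ by its saturation in $f_*\cL$ and checks that the generic rank discrepancy lives in the error term $\cO(h^{n-2})$.

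With these comparisons in hand, the conclusion is mechanical: the hypothesis $\mu(\cG_h)\ge h\mu(\cG)$ is equivalent to $r\deg\cG_h - h\deg\cG\cdot\rk\cG_h \ge 0$, and combining with the comparisons and the asymptotics yields
$$
0 \;\le\; r\deg\cG_h - h\deg\cG\cdot\rk\cG_h \;\le\; \frac{h^n}{n!}e(\cL,\cG) + \cO(h^{n-1})
$$
for infinitely many $h$, so the leading coefficient $e(\cL,\cG)/n!$ of the right-hand polynomial in $h$ must be non-negative, i.e.\ $(\cL,\cG)$ is $f$-positive. The main obstacle is case (b): nefness of $f_*\cL^{\otimes h}$ controls $\deg\cG_h$ cleanly via the nef quotient, but not directly $\rk\cG_h$, and a careful argument---via saturation, or a Harder--Narasimhan/Xiao-type filtration of $f_*\cL^{\otimes h}$---is needed to push the rank defect into lower order so that the asymptotic bookkeeping in the last step survives.
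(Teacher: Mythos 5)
Your proposal is correct in outline and takes essentially the same route as the paper's proof: bound $\deg\cG_h$ by $\deg f_*\cL^{\otimes h}$ (generic surjectivity of the multiplication maps in case (a), the nef quotient $f_*\cL^{\otimes h}/\cG_h$ in case (b)), extract the asymptotics of $\deg f_*\cL^{\otimes h}$ and $\rk f_*\cL^{\otimes h}$ from Grothendieck--Riemann--Roch together with $(\star)$, and read off the sign of the leading coefficient. Two comments. First, the direction you use, $r\deg\cG_h-h\deg\cG\,\rk\cG_h\le \frac{h^n}{n!}e(\cL,\cG)+\cO(h^{n-1})$, is the one the argument actually needs in order to conclude from the hypothesis $\mu(\cG_h)\ge h\mu(\cG)$; the corresponding display in the paper is printed with the opposite inequality sign, which is a slip. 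Second, the rank comparison in case (b), which you rightly single out as the delicate point, is a step the paper's proof elides entirely: it only records the degree inequality and tacitly substitutes $\rk f_*\cL^{\otimes h}$ for $\rk\cG_h$. Be aware, though, that your proposed remedy via saturation cannot work as stated, because saturating $\cG$ inside $f_*\cL$ does not change the generic fibre $G_t$ and hence leaves the asymptotics of $\rk\cG_h=\dim\mathrm{Im}\bigl(\sym^hG_t\to H^0(F,\cL_{|F}^{\otimes h})\bigr)$ untouched; what is really required is that these multiplication maps have asymptotically full rank (for instance, that $G_t$ defines a map birational onto its image), which is automatic in case (a) but is an additional geometric input in case (b).
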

\begin{proof}
Suppose that condition $(a)$ holds: then for $h\gg 0$ the sheaf $\cG_h$ generically coincides  with  (and is contained in) $f_*\cL^{\otimes h}$. Hence, as we are on a smooth curve, $\deg\cG_h\leq \deg  f_* \cL^{\otimes h}$ for $h\gg 0$.
The same inequality holds true if condition $(b)$ is satisfied.

By Grothendieck-Riemann-Roch theorem as in Theorem \ref{conto} we have that
$$\deg{ f_* \cL^{\otimes h}}=h^n\frac{L^n}{n!}+  \sum_{i\geq 1}(-1)^{i+1}\deg R^i f_* \cL^{\otimes h} +\cO(h^{n-1}),$$
$$\rk  f_* \cL^{\otimes h}=h^0(F, \cL_{|F}^{\otimes h})=h^{n-1}\frac{(L_{|F})^{n-1}}{(n-1)!}+\sum_{i\geq 1}(-1)^ih^i(F,\cL_{|F}^{\otimes h})+ \cO(h^{n-2}).$$
Putting all together and using assumption $(\star)$ we have
\begin{equation}
\begin{array}{ll}
\displaystyle{ \rk \cG \deg \cG_h-h \deg \cG\rk \cG_h}& \geq \displaystyle{\frac{h^n}{n!}\left(\rk \cG L^n - \deg \cG L_{|F}^{n-1}\right)+\cO(h^{n-1})=}\\
&\\
 &=\displaystyle{\frac{h^n}{n!}e(\cL,\cG)+\cO(h^{n-1}),}\\
\end{array}
\end{equation}
and the conclusion follows as in the above theorem.
\end{proof}

\begin{remark}
Note that if we drop assumption $(\star)$, we still obtain an inequality, involving a correction term due to the higher direct image sheaves.

The results above are generalizations of a computation contained in the proof of the main theorem of \cite{CH} (see also \cite{LS} and \cite[sec.2]{BS2}), where it is treated the case where the general fibre of $\cG$ is very ample.
\end{remark}


\subsection{Stability and $f$-positivity: first results}

Let us recall that a vector bundle $\cF$ over a smooth curve $B$ is said to be {\em $\mu$-stable (resp. $\mu$-semistable)} if for any proper subbundle $\cS\subset \cF$ we have $\mu(\cS)<\mu(\cF)$ (resp. $\leq$).
This is equivalent to asking that for any quotient bundle $\cF\surjarrow \cQ$ we have $\mu(\cQ)>\mu(\cF)$ (resp. $\geq $).

Let us now consider as usual a fibred variety $f\colon X\longrightarrow B$ over a curve $B$.
Let $\cL$ be a line bundle on $X$ and $\cG\subseteq f_*\cL$ a generating  subsheaf of rank $r$.
We see here that $\mu$-semistability of $\cG$ implies $f$-positivity. This is the first case we encounter where a stability condition implies the positivity of the C-H invariant.
However, $\mu$-semistability on the base is quite a restrictive condition to ask (see Remark \ref{stab-serio}).
In  \ref{XIAO}, we will see a method due to Xiao that uses vector bundle techniques on $\cG$ to prove some positivity results, but does not need to assume $\mu$-semistability.
However, we will see in \ref{LS&X} that, in order to give $f$-positivity as a result, Xiao's method needs another stability condition on the general fibres, the so-called linear stability.

\medskip

We will need the following simple remark.

\begin{remark}
Let $\cF$ be a vector bundle of rank $r$ on a smooth curve $B$.
Observe that, if $h$ is  any integer $\geq 1$, we have the following equalities:
$$
\deg (\sym^h\cF)=\binom{h+r-1}{r}\deg \cF,\quad
\rk  (\sym^h\cF)=\binom{h+r-1}{r-1}.
$$
We thus easily deduce the following.
\begin{equation}\label{sym}
\mu(\sym^h\cF)=h\mu(\cF).
\end{equation}
\end{remark}

\begin{theorem}\label{nonfaschifo}
With the notation above, let us suppose that $\cG$ is generating, or that the assumptions of Proposition \ref{conto1,5}  or of Proposition \ref{conto2} hold.
Then the following holds: if the sheaf $\cG$ is $\mu$-semistable,
then $(\cL,\cG)$ is $f$-positive.
\end{theorem}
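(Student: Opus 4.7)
The plan is to reduce the theorem to the polynomial criterion of Theorem \ref{conto} (and its variants \ref{conto1,5}, \ref{conto2}) via the behaviour of semistability under symmetric powers and quotients.

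First, I would recall the standard fact that, since we work over $\mathbb{C}$, the $\mu$-semistability of $\cG$ on the smooth curve $B$ is preserved by symmetric powers: if $\cG$ is $\mu$-semistable, then so is $\sym^h\cG$ for every $h\geq 1$. Combining this with equality \eqref{sym}, namely $\mu(\sym^h\cG)=h\mu(\cG)$, I get that every nonzero quotient $\cQ$ of $\sym^h\cG$ satisfies $\mu(\cQ)\geq h\mu(\cG)$.

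Second, I apply this to $\cG_h$. By construction, $\cG_h$ is the image of the multiplication map $\gamma_h\colon \sym^h\cG\longrightarrow f_*\cL^{\otimes h}$, hence a torsion-free quotient of $\sym^h\cG$ on the smooth curve $B$, so it is locally free and its slope satisfies
\[
\mu(\cG_h)\;\geq\;\mu(\sym^h\cG)\;=\;h\,\mu(\cG).
\]
This holds for every $h\geq 1$, and in particular for infinitely many $h>0$.

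Finally, I invoke the appropriate positivity criterion. Under each of the three alternative hypotheses in the statement, the conclusion is identical: Theorem \ref{conto} (when $\cG$ is generating and $\varphi$ generically finite on its image), Proposition \ref{conto1,5} (when $\cL$ is nef and the base locus of $\cG$ is vertical), or Proposition \ref{conto2} (under hypothesis $(\star)$ together with $(a)$ or $(b)$) all deliver $f$-positivity of $(\cL,\cG)$ from the inequality $\mu(\cG_h)\geq h\mu(\cG)$ for infinitely many $h$. One small subtlety: in the setting of Theorem \ref{conto}, one must check that when $\varphi$ fails to be generically finite then $e(\cL,\cG)=0$ automatically, but this is already handled inside the proof of Proposition \ref{conto1,5}. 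Thus the only non-formal ingredient is the preservation of $\mu$-semistability under symmetric powers in characteristic zero, which I regard as the main (but entirely standard) input; everything else is a direct chain of implications.
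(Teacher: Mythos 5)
Your proof is correct and follows essentially the same route as the paper: semistability of $\sym^h\cG$, the slope identity $\mu(\sym^h\cG)=h\mu(\cG)$, the quotient inequality for $\cG_h$, and then the case split between Theorem \ref{conto} (with the non-generically-finite case giving $e(\cL,\cG)=0$) and Propositions \ref{conto1,5} and \ref{conto2}. You even state the quotient-slope inequality in the correct direction, $\mu(\cG_h)\geq\mu(\sym^h\cG)$, whereas the paper's text writes it reversed — an evident typo, since the subsequent application of Theorem \ref{conto} requires exactly your version.
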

\begin{proof}
If $\cG$ is $\mu$-semistable then $\sym^h\cG$ is $\mu$-semistable for any $h$, so that we have that the inequality $\mu (\sym^h \cG)\geq \mu(\cG_h)$ is satisfied.
But $\mu (\sym^h \cG)=h\mu(\cG)$ by formula (\ref{sym}) above.

Then if  the conditions in Proposition \ref{conto1,5} or in Proposition \ref{conto2} are satisfied, we are done.

Let us now suppose that $\cG$ is generating. If the morphism $\varphi$ it induces is not generically finite on its image then $e(\cL,\cG)=0$.
If on the contrary $\varphi$ is generically finite on its image, by what we have seen above, we are in the conditions to apply Theorem \ref{conto}.
\end{proof}

\begin{remark}\label{stab-serio}
From the above argument, we see that the $\mu$-stability of $\cG$ is much more than we need to prove $f$-positivity: indeed, in order to assure $f$-positivity, we just need that for infinitely many $h>0 $ the sheaf $\cG_h$ is not destabilizing for $\sym^h\cG$, and this  condition is almost 
necessary (Proposition \ref{conto}). The condition of $\mu$-stability of  $\sym^h\cG$ implies instead that this sheaf does not have {\em any} destabilizing quotient.

Indeed, it seems that the $\mu$-stability of $\cG$ is an extremely restrictive condition to ask.
In order to illustrate this, consider any variety fibred over $\pr^1$, and consider the relative canonical sheaf $\omega_f$.
 If the sheaf $f_*\omega_f$ is $\mu$-semistable, then necessarily its rank has to divide its degree, so that
$h^0(F,K_F)$ necessarily divides $\deg f_*\omega_f$. Any fibred variety violating this numerical condition cannot have $f_*\omega_f$  $\mu$-semistable.
Moreover, let us recall Fujita's decomposition theorem for the pushforward of the relative canonical sheaf.  Given a fibration $f\colon X\longrightarrow B$, we have that
\begin{equation}
f_*\omega_f=\cA\oplus (\oplus^{q_f}\cO_B),
\end{equation}
where $q_f:= h^1(B,f_*\omega_X)$, and $H^0(B, \cA^*)=0$.
From this result we see that $f_*\omega_f$ fails to be semistable as soon as $q_f>0$. For instance, for any fibred surface $f\colon S\longrightarrow B$ with $q(S)>b$, the pushforward of the relative canonical sheaf needs to be $\mu$-unstable.
See \cite{X} (in particular Theorem 3) for some related results.
\end{remark}

\medskip

A weaker version of Theorem \ref{nonfaschifo} can be proved as a corollary of a beautiful result due to Miyaoka, as we see below.

\medskip

Let us first define the setting of Miyaoka's Theorem.
Let $\cF$ be a vector bundle over a smooth curve $B$. Let $\pi\colon \pr:=\pr_B(\cF) \longrightarrow B$ be the relative projective bundle, and let $H$ be a tautological divisor on $\pr$, i.e. $\cO_\pr(H)\cong \cO_\pr(1)$, and let $\Sigma $ be a general fibre of $\pi$.

\begin{theorem}[Miyaoka \cite{miyaoka}]\label{teomiyaoka}
Using the above notations, the sheaf $\cF$ is $\mu$-semistable if and only if the $\Q$-divisor
$$H-\mu(\cF)\Sigma $$ is nef.
\end{theorem}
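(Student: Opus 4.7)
My plan is to split the biconditional and treat each direction with a different tool, since the hard work is concentrated in one of them.

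For the direction ``$H-\mu(\cF)\Sigma$ nef $\Rightarrow$ $\cF$ $\mu$-semistable'': given any quotient bundle $\cF\twoheadrightarrow \cQ$ of rank $s$, the surjection induces an embedding $\iota\colon\mathbb{P}_B(\cQ)\hookrightarrow\pr$ of an $s$-dimensional subvariety over $B$, compatible with the projections, with $\iota^*\cO_\pr(1)\cong\cO_{\mathbb{P}_B(\cQ)}(1)$. By Kleiman's theorem a nef $\mathbb{Q}$-divisor has non-negative intersection with any $s$-dimensional subvariety, so expanding
$$(H-\mu(\cF)\Sigma)^s\cdot\mathbb{P}_B(\cQ)$$
and using the standard Segre-class identities $(H|_{\mathbb{P}_B(\cQ)})^s = \deg\cQ$ and $(H|_{\mathbb{P}_B(\cQ)})^{s-1}\cdot\Sigma|_{\mathbb{P}_B(\cQ)} = 1$ produces $\deg\cQ\geq s\mu(\cF)$, i.e.\ $\mu(\cQ)\geq\mu(\cF)$, which gives semistability.

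For the converse I would reduce to a clean special case and then quote a classical result. First, by passing to a finite cover $\rho\colon B'\to B$ of degree $r=\rk \cF$, and noting that pullback of vector bundles preserves $\mu$-semistability on curves while nefness of $\mathbb{Q}$-divisors can be checked after finite surjective morphisms, I can reduce to the case $\mu(\cF)\in\mathbb{Z}$. Second, tensoring $\rho^*\cF$ with a line bundle $\cA$ on $B'$ of degree $-\mu(\rho^*\cF)$ does not change the projective bundle but shifts the tautological class by a pullback from $B'$; consequently, the nefness of (the pullback of) $H-\mu(\cF)\Sigma$ becomes exactly the nefness of $\cO_{\mathbb{P}(\cF'')}(1)$, where $\cF''=\rho^*\cF\otimes\cA$ is a semistable bundle of degree zero on $B'$.

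The final and main step is then the classical fact (due to Hartshorne) that a semistable vector bundle of non-negative degree on a smooth curve has nef tautological line bundle on its projective bundle. This is the place where semistability is genuinely used: semistability of slope $\geq 0$ forces every quotient bundle to have non-negative degree (any quotient of negative degree would destabilize), and one concludes by the standard characterization of nef bundles on curves through the minimum Harder--Narasimhan slope. The two preliminary reductions are formal manipulations of projective bundles, so the nefness statement on curves is where the real obstacle lies.
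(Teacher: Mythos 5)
The paper offers no argument to compare against here: Theorem \ref{teomiyaoka} is quoted from \cite{miyaoka} and used as a black box, so your proposal must stand on its own. Your first direction (nef $\Rightarrow$ $\mu$-semistable) is correct and standard: a quotient bundle $\cF\surjarrow\cQ$ of rank $s$ gives a closed embedding $\pr_B(\cQ)\hookrightarrow\pr$ over $B$ compatible with tautological classes, and $(H-\mu(\cF)\Sigma)^s\cdot[\pr_B(\cQ)]=\deg\cQ-s\,\mu(\cF)\geq 0$ is exactly the quotient form of semistability used in the paper. The genuine gap is in the converse, precisely where you yourself say the content lies. Your two reductions (degree-$r$ cover to make the slope integral, then a twist to reach degree zero) are legitimate, but the fact you then invoke --- ``the standard characterization of nef bundles on curves through the minimum Harder--Narasimhan slope'', i.e.\ $E$ nef iff $\mu_{\min}(E)\geq 0$ --- \emph{is} the implication being proved, merely rewritten: the two statements are interchangeable via exactly the cover-and-twist manipulations you perform, so the hard direction has been reduced to itself. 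The attribution to Hartshorne does not close this: the theorem of \cite{HarAmple} characterizes \emph{ampleness} via quotients of \emph{positive} degree, and passing from that to nefness at degree exactly zero needs a perturbation/limit argument (e.g.\ showing $H+\frac{1}{N}\Sigma$ is ample for all $N$) that you do not supply.

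The missing step admits a short direct proof that bypasses both of your reductions. Let $C\subset\pr$ be an irreducible curve. If $C$ lies in a fibre, then $\Sigma\cdot C=0$ and $H\cdot C\geq 0$ because $H$ restricts to a hyperplane class on $\Sigma\cong\pr^{r-1}$. Otherwise the composite $\widetilde C\to C\to B$ from the normalization is a finite morphism of smooth curves of degree $e=\Sigma\cdot C$; in characteristic zero the pullback of $\cF$ to $\widetilde C$ is again $\mu$-semistable, of slope $e\,\mu(\cF)$, and the map $\widetilde C\to\pr$ over $B$ corresponds to a line-bundle quotient of this pullback whose degree is $H\cdot C$. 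Semistability then gives $H\cdot C\geq e\,\mu(\cF)=\mu(\cF)(\Sigma\cdot C)$, that is $(H-\mu(\cF)\Sigma)\cdot C\geq 0$. This is where semistability is genuinely used, and the only external input is preservation of semistability under finite pullback of smooth curves, which is independent of the statement being proved.
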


\medskip

Applying Theorem \ref{teomiyaoka} to our situation we can deduce the following
\begin{corollary}\label{cormiyaoka}
Let $\cL$ be a nef line bundle, $f\colon  X\longrightarrow B$ a fibration and  $\cG\subseteq f_*\cL$  has base locus vertical with respect to $f$.
If the sheaf $\cG$ is $\mu$-semistable, then the couple $(\cL,\cG)$ is $f$-positive.
\end{corollary}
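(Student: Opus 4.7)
The strategy is to transport Miyaoka's nefness statement (Theorem \ref{teomiyaoka}) from the projective bundle $\pr = \pr_B(\cG)$ onto $\widetilde X$ via $\widetilde\varphi$ and then compare $\cM^n$ with $L^n$ using nefness of $\cL$.

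First, since $\cG$ is $\mu$-semistable, Miyaoka's theorem gives that the $\Q$-divisor $H - \mu(\cG)\Sigma$ is nef on $\pr$, where $H$ is tautological and $\Sigma$ a fibre of $\pi\colon\pr\to B$. Consider the morphism $\widetilde\varphi\colon \widetilde X\to \pr$ associated to $(\cL,\cG)$, and let $\cM=\widetilde\varphi^*\cO_\pr(1)$ with corresponding divisor $M$. Pulling back along $\widetilde\varphi$, the $\Q$-divisor $M-\mu(\cG)\widetilde F$ is nef on $\widetilde X$, where $\widetilde F$ denotes a general fibre of $\widetilde f=f\circ \nu$.

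Next, I would exploit $\widetilde F^2=0$: the top self-intersection of the nef divisor $M-\mu(\cG)\widetilde F$ satisfies
\begin{equation*}
0 \;\le\; \bigl(M-\mu(\cG)\widetilde F\bigr)^n \;=\; M^n - n\,\mu(\cG)\,M^{n-1}\widetilde F,
\end{equation*}
since all higher powers of $\widetilde F$ vanish. Then $M^{n-1}\widetilde F=(M_{|\widetilde F})^{n-1}$, and because the base locus of $\cG$ is vertical (so $\nu^*D$ does not meet $\widetilde F$, and $E$ is $\nu$-exceptional, in particular contributes nothing to the self-intersection on $\widetilde F$), one has $(M_{|\widetilde F})^{n-1}=(L_{|F})^{n-1}$. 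Hence $M^n \ge n\,\mu(\cG)(L_{|F})^{n-1}$.

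The remaining and most delicate step is to show $L^n\ge M^n$. Writing $\nu^*L = M + Z$ with $Z=\nu^*D + E$ effective, expand
\begin{equation*}
L^n = (\nu^*L)^n = M^n + \sum_{k=1}^{n}\binom{n}{k}M^{n-k}\cdot Z^k.
\end{equation*}
Each cross term $M^{n-k}\cdot Z^k$ is non-negative: $M$ is nef (pullback of the nef class $H$ restricted to the image, or equivalently $\widetilde\varphi^*\cO_\pr(1)$ is nef along $\widetilde\varphi(\widetilde X)$), $\nu^*L$ is nef, and one argues inductively by restricting to components of the effective divisor $Z$, where nef divisors restrict to nef divisors and therefore have non-negative top self-intersection. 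Combining, $L^n\ge M^n\ge n\,\mu(\cG)(L_{|F})^{n-1}$, and multiplying by $r=\rk\cG$ yields $e(\cL,\cG)=rL^n - n\deg\cG\,(L_{|F})^{n-1}\ge 0$, as required. The main obstacle is precisely this last comparison $L^n\ge M^n$: one must verify that the positivity of all intersection numbers involving $Z$ really follows from the nefness of the relevant classes, which is where the hypothesis that $\cL$ be nef (rather than only relatively nef) is used.
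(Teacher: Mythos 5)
Your overall route is exactly the paper's: pull back Miyaoka's nef class $H-\mu(\cG)\Sigma$ along $\widetilde\varphi$ to get the nef $\Q$-divisor $M-\mu(\cG)\widetilde F$ on $\widetilde X$, take its non-negative top self-intersection to obtain $M^n\geq n\mu(\cG)(M_{|\widetilde F})^{n-1}=n\mu(\cG)(L_{|F})^{n-1}$, and then compare $L^n$ with $M^n$ as in Proposition \ref{conto1,5}. Steps one through three are fine.

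The justification you give for the last step, $L^n\geq M^n$, is however wrong as stated. You expand $(\nu^*L)^n=(M+Z)^n$ and claim each term $\binom{n}{k}M^{n-k}\cdot Z^k$ is non-negative. This fails already for $k\geq 2$: $Z^k$ is not an effective cycle, and its intersection with a nef class can be negative. The simplest instance is $n=2$ with $Z=E$ a $\nu$-exceptional $(-1)$-curve, where the $k=2$ term is $E^2=-1<0$; your proposed fix of ``restricting to components of $Z$'' does not help, because $Z_{|Z_i}$ (e.g.\ $E_{|E}$) is neither nef nor effective. The inequality $L^n\geq M^n$ is nevertheless true, and the correct argument (the one implicit in Proposition \ref{conto1,5}) is to factor the difference rather than expand the power:
\begin{equation*}
L^n-M^n=(\nu^*L-M)\cdot\sum_{i=0}^{n-1}(\nu^*L)^{i}M^{n-1-i}=Z\cdot\sum_{i=0}^{n-1}(\nu^*L)^{i}M^{n-1-i}\geq 0,
\end{equation*}
where each summand is non-negative because $Z$ is effective and both $\nu^*L$ and $M$ are nef (note that one may assume $\mu(\cG)\geq 0$, since otherwise $f$-positivity is trivial for nef $\cL$, and then $M=(M-\mu(\cG)\widetilde F)+\mu(\cG)\widetilde F$ is a sum of nef classes). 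With this replacement your argument coincides with the paper's proof.
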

\begin{proof}
With the notations of Section \ref{risultati}, let  us observe that
$$\widetilde \varphi^*(H-\mu(\cG)\Sigma)=\nu^*(L-D)-E-\mu(\cG) F.$$
Recalling that $\widetilde \varphi$ is a morphism, by  Theorem \ref{teomiyaoka}  the divisor $\nu^*(L-D)-E-\mu(\cG) F$ is nef.
This divisor therefore has non-negative top self-intersection, and so the result follows using the same computations of Proposition \ref{conto1,5} and Remark \ref{topselfint}.
\end{proof}

\section{The three methods}\label{METHODS}



\subsection{Cornalba-Harris and Bost: Hilbert and Chow stability}\label{C&H}

We now present the method of Cornalba and Harris \cite{CH}, in the generalized setting introduced in \cite{LS}.
Let us start with  a definition. Let $X$ be a variety,
with a linear system $V\subseteq H^0(X,\cD)$,
for some line bundle  $\cD$ on $X$.
Fix $h\geq 1$ and call $G_h$ the image of the natural homomorphism
$$\sym^hV\stackrel{\varphi_h}{-\!\!\!\longrightarrow}H^0(X, \cD^{\otimes h}).$$
Set $N_h=\dim G_h$ and take exterior powers
\begin{equation}\label{omo}
\bigwedge^{N_h}\sym^hV\stackrel{\wedge ^{N_h}\varphi_h}{-\!\!\!\longrightarrow}
\bigwedge^{N_h}G_h=\det G_h.
\end{equation}
The map $\wedge ^{N_h}\varphi_h$ defines uniquely an element $\left[\wedge ^{N_h}\varphi_h\right]\in\pr (\wedge^{N_h}\sym^hV^{\vee})$ which we call
the  \emph{generalized $h$-th Hilbert point associated to the couple  $(X, V)$.}

\begin{definition}\label{genhilb}
With the above notation, we say that the  couple $(X, V)$ is {\em Hilbert (semi)stable} if its generalized $h$-th Hilbert points are GIT (semi)stable for infinite $h\in \mathbb N$.
\end{definition}

\begin{remark}\label{hilbfinito}
Let   $(X, V)$ be as above.
Consider the factorization of the induced map through the image
$$
\xymatrix{X\ar@{-->}[r]& \overline X \,\,\ar@{^{(}->}[r]^{j}&\pr^r.}
$$
Set $\overline \cD=j^*(\cO_{\pr^r}(1))$ and let $\overline V\subseteq H^0(\overline X,\overline \cD)$
 be the linear systems associated to $j$.
The homomorphism (\ref{omo}) factors as follows:
$$\sym^hV\cong\sym^h\overline V\stackrel{\overline \varphi_h}{\longrightarrow} H^0(\overline X,\overline{\cD}^{\otimes h})
\hookrightarrow H^0(X, \cD^{\otimes h}),$$
where the homomorphism $\overline \varphi_h$ is the $h$-th Hilbert point of the
embedding $j$;
notice that, by Serre's vanishing theorem, this homomorphism is onto
(and, in particular, $G_h=H^0(\overline X,\overline{\cD}^{\otimes h})$)  for large enough $h$.
The generalized $h$-th Hilbert point of $(X, V)$ is therefore naturally identified
with the $h$-th Hilbert point of $(\overline X,\overline V)$, and  the generalized Hilbert stability
of $(X,V)$ coincides with the classical Hilbert stability
of the embedding $j$.
\end{remark}

Now consider a fibred variety $f\colon X \longrightarrow Y$, where the base $Y$ is smooth but not necessarily of dimension $1$.
Let ${\cal L}$ be a line bundle on $X$, and let ${\cal G}\subseteq f_*{\cal L}$ be a subsheaf of rank $r$.
Consider the homomorphism of sheaves
$\sym^h\cG\longrightarrow f_*\cL^{\otimes h}$ and, as usual, call $\cG_h$ its image.

\begin{theorem}[Cornalba-Harris]\label{CoHa}
With the above notation, suppose that for general $y\in Y$ the $h$-th generalized Hilbert point of the fibre ${G}_y:={\cal G}\otimes \kapp(y)\subseteq H^0(F, {\cal L}_{|F})$ is semistable.

Then the line bundle
$$
\mathcal L_h:=\det(\cG_h)^{\otimes r}\otimes (\det \cG)^{-\otimes h\rk\cG_h}
$$
is pseudo-effective.
\end{theorem}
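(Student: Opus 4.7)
The plan is to prove the theorem via relative geometric invariant theory: the fiberwise Hilbert semistability assumption will produce a nonzero global section of a positive power of $\mathcal{L}_h$, from which pseudo-effectiveness follows immediately. I would first package the multiplication $\gamma_h\colon \sym^h\cG\longrightarrow \cG_h$ into a single section by setting $N_h:=\rk\cG_h$ and $A:=\bigwedge^{N_h}\sym^h\cG$ and taking the $N_h$-th exterior power; this produces
\[
s:=\wedge^{N_h}\gamma_h\;\in\;H^0\bigl(Y,\,A^\vee\otimes\det\cG_h\bigr),
\]
and by construction each fiber $s_y$ is, up to scalar, the generalized $h$-th Hilbert point of $(F_y,G_y)$ inside $\pr(A_y^\vee)$.

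Next I would use the semistability hypothesis at a general $y_0\in Y$: it provides an $SL(G_{y_0})\cong SL_r$-invariant polynomial $I\in \sym^d A_0$, of some degree $d\geq 1$, not vanishing at $s_{y_0}$, where $A_0:=\bigwedge^{N_h}\sym^h\kapp^r$ is the underlying $GL_r$-representation. After replacing $I$ by a power I may assume $r\mid d$. A central-weight count shows that the center of $GL_r$ acts on $\sym^d A_0$ with weight $dhN_h=rk$ for $k:=dhN_h/r$, so the $SL_r$-invariant line spanned by $I$ is a copy of the character $\det^{\otimes k}$ sitting inside $\sym^d A_0$. Applying the associated-bundle construction to the frame bundle of $\cG$, this $GL_r$-equivariant inclusion globalizes to an inclusion of sheaves on $Y$
\[
\iota_I\colon (\det\cG)^{\otimes k}\;\hookrightarrow\;\sym^d A.
\]
Contracting $\iota_I$ against the morphism $\sym^d A\longrightarrow (\det\cG_h)^{\otimes d}$ induced by $s^{\otimes d}$ yields the desired section
\[
\sigma_I\;\in\;H^0\bigl(Y,\,(\det\cG_h)^{\otimes d}\otimes (\det\cG)^{-\otimes k}\bigr)=H^0\bigl(Y,\,\mathcal{L}_h^{\otimes d/r}\bigr);
\]
at $y_0$ it evaluates, up to a nonzero scalar, to $I(s_{y_0})\neq 0$. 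Hence $\mathcal{L}_h^{\otimes d/r}$ is effective and $\mathcal{L}_h$ is pseudo-effective.

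The hard part will be the representation-theoretic bookkeeping: correctly identifying the central character, arranging $d$ to be divisible by $r$ so that the globalization produces an honest line bundle, and verifying that the exponents produced really match those appearing in $\mathcal{L}_h=(\det\cG_h)^{\otimes r}\otimes (\det\cG)^{-\otimes hN_h}$. A secondary subtlety worth noting is that this GIT argument only exploits semistability at one general point, so the hypothesis as stated is somewhat stronger than what is strictly needed for pseudo-effectivity.
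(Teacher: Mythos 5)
Your argument is correct and is essentially the standard Cornalba--Harris construction that the paper itself does not reproduce but cites from \cite{CH} and \cite{LS}: packaging $\wedge^{N_h}\gamma_h$ as a section, using fibrewise GIT semistability to produce an $SL_r$-invariant $I$ of central weight $dhN_h$, globalizing the line $\kapp\cdot I$ as $(\det\cG)^{\otimes dhN_h/r}\hookrightarrow \sym^d\bigl(\wedge^{N_h}\sym^h\cG\bigr)$, and contracting to get a nonzero section of $\mathcal{L}_h^{\otimes d/r}$. The only point worth flagging is that over a higher-dimensional base $\cG$ need only be locally free off a codimension-two locus, so the frame-bundle globalization and the resulting section should be constructed there and then extended, which is harmless since $Y$ is smooth.
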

The above result  is the key point of the proof of \cite[Theorem 1.1]{CH}.
%
%
%
%
%
In particular, when the base $Y$ is a smooth curve, we obtain the following inequality
\begin{equation}\label{importante}
\rk \cG \deg \cG_h-h \deg \cG\rk \cG_h\geq 0,
\end{equation}

In the general case with base of arbitrary dimension it is possible, under some assumptions, to compute the first Chern class of $\cL_h$ as a polynomial in $h$ with coefficients in $CH_1(Y)_\Q$ and to conclude that its leading term is a pseudoeffective class (\cite[Theorem  1.1]{CH} and \cite[Corollary 1.6]{LS}).

 \smallskip

Applying the results of Section \ref{risultati}, we obtain the following condition for $f$-positivity, which provides an improvement of Theorem 1.1 of \cite{CH} in the case of  $1$-dimensional base.

\begin{theorem}\label{corCH}
With the notation above, suppose that the base $Y=B$ is a curve.
Suppose that the sheaf $\cG$ is either generating, or it satisfies the conditions of Proposition \ref{conto1,5} or  \ref{conto2}.
Suppose moreover that for general $t\in B$ the fibre ${G}_t\subseteq H^0(F, {\cal L}_{|F})$ is Hilbert semistable.
Then $(\cL,\cG)$ is $f$-positive.
\end{theorem}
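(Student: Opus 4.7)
The plan is to reduce the statement to an application of Theorem \ref{CoHa} followed by one of the intersection-theoretic criteria from Section \ref{firstresults}. The bridge between Cornalba--Harris and $f$-positivity is already set up: Theorem \ref{CoHa} produces, for each $h$ where the $h$-th generalized Hilbert point of $(F, G_t)$ is semistable at a general $t\in B$, a pseudo-effective line bundle $\mathcal{L}_h$ on the base; and Theorem \ref{conto}, Proposition \ref{conto1,5} and Proposition \ref{conto2} all turn an inequality of the form $\mu(\cG_h)\geq h\mu(\cG)$ holding for infinitely many $h$ into $f$-positivity of $(\cL,\cG)$. So the proof is essentially to glue these two steps.

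More precisely, the first step is to take any $h\in\mathbb{N}$ for which the $h$-th generalized Hilbert point of $(F,G_t)$ is semistable for general $t\in B$. By Theorem \ref{CoHa} (with $Y=B$), the line bundle
\[
\mathcal{L}_h \;=\; \det(\cG_h)^{\otimes r} \otimes (\det \cG)^{-\otimes h\rk\cG_h}
\]
is pseudo-effective on the curve $B$, which on a smooth curve simply means $\deg\mathcal{L}_h \geq 0$. Expanding the degrees gives precisely inequality \eqref{importante}, namely
\[
\rk\cG\,\deg\cG_h - h\deg\cG\,\rk\cG_h \;\geq\; 0,
\]
which is the same as $\mu(\cG_h)\geq h\mu(\cG)$. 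By Definition \ref{genhilb}, the Hilbert semistability hypothesis on $(F,G_t)$ guarantees that this happens for infinitely many $h$.

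The second step is to invoke the appropriate positivity result according to which hypothesis on $\cG$ is satisfied. If $\cG$ is generating and the induced $\varphi$ is generically finite on its image, Theorem \ref{conto}(1) applies directly and gives $f$-positivity of $(\cL,\cG)$; if $\varphi$ is not generically finite on its image, then $e(\cL,\cG)=0$ by the same arguments used in the proof of Proposition \ref{conto1,5}, so $f$-positivity is trivial. If instead $\cG$ satisfies the base-locus hypothesis of Proposition \ref{conto1,5}, we apply that proposition; if $\cG$ satisfies the cohomological hypotheses (a) or (b) of Proposition \ref{conto2}, we apply that one. In every case the input is exactly the inequality $\mu(\cG_h)\geq h\mu(\cG)$ for infinitely many $h$, and the output is $e(\cL,\cG)\geq 0$.

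The main obstacle is really just bookkeeping: one has to check that the three cases of the hypothesis on $\cG$ match the three intersection-theoretic statements cleanly, and in particular dispose of the subtlety that Theorem \ref{conto} requires not only $\cG$ generating but also $\varphi$ generically finite on its image. That minor gap is filled by the observation above that, when $\varphi$ fails to be generically finite on its image, $L^n$ and $(L_{|F})^{n-1}$ both vanish through the factorization $X\dashrightarrow \overline{X}\hookrightarrow\pr$, and hence $e(\cL,\cG)=0$. Aside from this, the argument is a direct combination of Theorem \ref{CoHa} with the computations of \ref{firstresults}.
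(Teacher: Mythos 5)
Your proposal is correct and follows exactly the route the paper takes: its proof of Theorem \ref{corCH} is literally ``Apply Theorem \ref{CoHa} above, and Theorem \ref{conto} and Proposition \ref{conto1,5} and \ref{conto2}'', which is the combination you spell out. Your extra care about the case where $\varphi$ is not generically finite on its image matches the paper's own handling of that point in the proofs of Theorem \ref{nonfaschifo} and Proposition \ref{conto1,5}.
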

\begin{proof}
Apply Theorem \ref{CoHa} above, and Theorem \ref{conto} and Proposition \ref{conto1,5} and \ref{conto2}.
\end{proof}


\subsubsection*{Bost's result: Chow stability}

We now describe a result of Bost, which is almost equivalent to the one of Cornalba-Harris, except that it uses as assumption the Chow stability on the general fibres.
Moreover it has to be mentioned that Bost's result holds in positive characteristic.

Let us first recall some definitions.
Let $X$ be an $n$-dimensional variety together with a finite morphism of degree $a$ in the projective space $\varphi\colon X\longrightarrow \pr^r$ associated to a linear system $V\subseteq H^0(X, \cD)$.
Consider
$$Z(X):=\{n-\mbox{spaces } \pi \mbox{ of  } V \mid Ann(\pi)\cap \varphi(X) \not = \emptyset \}\subset  Gr(n,V).$$
The set $Z(X)$ is an hypersurface  of degree $d=\deg \varphi /a$, in the grassmanian $Gr(n,V)$.
The homogeneous polynomial $F_X\in H^0( Gr(n,V), \cO_{Gr(n,V)}(d))$ representing $Z(X)$ is the {\em Chow form} of $(X,V)$ and the  {\em Chow point} of $(X,V)$ is the class of $F_X$ in $\pr (H^0( Gr(n,V), \cO_{Gr(n,V)}(d)))$

The couple $(X, V)$ is  {\em Chow (semi)stable} if  its Chow point is GIT (semi)stable with respect to the natural $SL(V)$ action.

\begin{remark}\label{Chow-image}
Note that $X$ is Chow (semi)stable if and only if the cycle $mX$ is, for any integer $m$: see for instance \cite{bost}, proof of Proposition 4.2.
So, in particular, the Chow (semi)stability of $(X,V)$ as above coincides with the Chow (semi)stability of the cycle image $\varphi_*(X)$ together with the linear system of the immersion induced by $\varphi$. This fact should be compared with the behavior of the Hilbert stability described in Remark \ref{hilbfinito}
\end{remark}

\begin{remark}\label{Chow-Hilbert}
In \cite[Corollary 3.5]{Mor}, it is proven that Chow stability implies Hilbert stability, while for {\em semi}stability, the arrows are reversed. Although it is not known an example in which this two stabilities do not correspond, nothing is known about the converse implication, except for asymptotic results (see Remark \ref{risultati-hilbert-stability} below).
\end{remark}
Hence, we can apply Theorem \ref{corCH} if we replace the assumption of Hilbert semistability with Chow stability, but we can not assume Chow semistability.

In \cite{bost},  Bost has proven an arithmetic analogue to  the theorem of Cornalba and Harris, assuming the Chow semistability of the maps on the general fibres. The  geometric counterpart of Bost's result in the case when the base is $1$-dimensional is the following.
Consider as usual a fibred variety $f\colon X \longrightarrow B$.
Let ${\cal L}$ be a line bundle on $X$, and let ${\cal G}\subseteq f_*{\cal L}$ be a subsheaf of rank $r$.

\begin{theorem}[\cite{bost} Theorem 3.3]\label{teobost}
With the above notation, suppose that
\begin{enumerate}
\item for $t\in B$ general, the fibre $G_t:=\mathcal G\otimes\kapp(t)\subseteq H^0(F, \cL_{|F})$ is base-point free;
\item if $\alpha\colon F\longrightarrow \pr^r$ is the morphism induced, the cycle $\alpha_*(F)\in Z_p (\pr^r)$ is Chow semi-stable;
\item the line bundle $\cL$ is relatively nef.
\end{enumerate}
Then the couple
$(\cL,\mathcal G)$ is $f$-positive.
\end{theorem}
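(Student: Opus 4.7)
The plan is to adapt the strategy of \cite{bost} to the geometric setting: reduce $f$-positivity to a pseudo-effectivity statement on $B$ for a natural line bundle associated with the relative Chow form of the image family, and then invoke Mumford's fundamental theorem of GIT.

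First, I would perform the standard geometric reductions. By Remark \ref{nef-rel-nef} I may twist $\cL$ by the pullback of a sufficiently positive line bundle on $B$ to assume that $\cL$ is nef (this leaves $e(\cL,\cG)$ unchanged). Base-point-freeness on general fibres means the fixed locus of $\cG$ on $X$ is supported on finitely many fibres; passing to the resolution $\widetilde X$ and replacing $\cL$ by its moving part $\cM$ as in Section \ref{firstdefinitions}, I can assume that $\cG$ is globally generating and defines a morphism $\varphi\colon X\to \pr :=\pr_B(\cG)$ with $\cL = \varphi^*\cO_\pr(1)$; exactly as in the proof of Proposition \ref{conto1,5}, nefness of $\cL$ together with the identity $M_{|F}\sim L_{|F}$ reduces everything to $f$-positivity of the moving part. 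Letting $\overline X=\varphi(X)$, $\overline f\colon\overline X\to B$, $\overline\cL=\cO_\pr(1)_{|\overline X}$ and $\alpha\colon X\to \overline X$, either $\alpha$ is not generically finite (in which case $e(\cL,\cG)=0$ and we are done) or it has some generic degree $a\geq 1$ with $L^n=a\overline L^n$ and $L_{|F}^{n-1}=a\overline L_{|\overline F}^{n-1}$; in the latter case the problem reduces to $f$-positivity of $(\overline\cL,\cG)$ for $\overline f$.

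Second, I would globalize the Chow construction. Fibrewise, $\overline X_t=\alpha_*(F_t)$ is an $(n-1)$-cycle of fixed degree $d$ in $\pr^{r-1}$, and so defines a point in the classical Chow variety $\mathrm{Ch}_{n-1,d}(\pr^{r-1})$. Collecting these points yields a section $\sigma\colon B\to \cC$ of a Zariski-locally trivial bundle $\cC\to B$ whose fibres are $\mathrm{Ch}_{n-1,d}(\pr^{r-1})$ and whose structure group is $\mathrm{SL}(\cG)$ acting through $\mathrm{PGL}$. On $\cC$ there is a natural $\mathrm{SL}(\cG)$-linearized relatively very ample line bundle $\cO_\cC(1)$, namely the restriction of the Pl\"ucker polarization on the ambient projective bundle of Chow forms; set $\cN:=\sigma^*\cO_\cC(1)$. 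A direct Grothendieck-Riemann-Roch bookkeeping, entirely parallel to the one performed in the proof of Theorem \ref{conto} and explicit in \cite[\S3]{bost}, identifies $\deg \cN$ as a fixed positive rational multiple of the top self-intersection $(r\overline L-\deg\cG\,\overline F)^n$, i.e.\ of $e(\overline\cL,\cG)$ (cf. Remark \ref{topselfint}).

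Third, I would invoke Mumford's fundamental theorem of GIT. Chow semistability of $\overline X_t$ at a general $t\in B$ provides an integer $N>0$ and an $\mathrm{SL}(\cG\otimes\kapp(t))$-invariant section of $\cO(N)$ on $\mathrm{Ch}_{n-1,d}(\pr^{r-1})$ that does not vanish at the Chow point of $\overline X_t$. By $\mathrm{SL}(\cG)$-equivariance of the whole construction, such invariant sections descend along the twist $\cC\to B$ to sections of $\cN^{\otimes N}$, yielding a rational section of $\cN^{\otimes N}$ on $B$ that is regular and non-vanishing at the general point. Hence $\deg \cN\geq 0$, so $e(\overline\cL,\cG)\geq 0$, and by the first step $e(\cL,\cG)\geq 0$ as well.

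The hard part will be making the intermediate step rigorous: constructing the relative Chow variety $\cC$ over $B$ together with its $\mathrm{SL}(\cG)$-linearized Pl\"ucker line bundle, carrying out the Grothendieck-Riemann-Roch calculation that identifies $\deg \cN$ as a positive multiple of $e(\overline\cL,\cG)$, and verifying that $\mathrm{SL}$-invariant sections on the absolute Chow variety really do descend to sections of $\cN^{\otimes N}$ on $B$ (modulo a twist by a power of $\det\cG$ whose contribution vanishes by $\mathrm{SL}$-invariance). These technicalities are the geometric counterpart of the Arakelov-theoretic constructions of \cite{bost} and constitute the technical heart of the theorem.
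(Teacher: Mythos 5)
The paper does not actually prove Theorem \ref{teobost}: it is quoted verbatim as the geometric, one-dimensional-base counterpart of Bost's Theorem 3.3 in \cite{bost}, so there is no internal proof to compare against. That said, your sketch is a faithful reconstruction of the strategy that does prove it, namely the Chow-form analogue of the Cornalba--Harris argument the paper gives for Theorem \ref{CoHa}: reduce to the moving part, globalize the fibrewise Chow points into a section of a relative Chow variety with an $\mathrm{SL}(\cG)$-linearized polarization, identify the degree of the pulled-back line bundle with a positive multiple of $e(\cL,\cG)$, and use a GIT-invariant section not vanishing at the general Chow point to conclude pseudo-effectivity. Two places where your outline is looser than the actual argument are worth flagging. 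First, the identification of $\deg\cN$ with $e(\overline\cL,\cG)$ is not literally the asymptotic Grothendieck--Riemann--Roch extraction of Theorem \ref{conto}; for the Chow form it is an exact computation (the variation of the resultant, i.e.\ a Deligne-pairing identity giving $\overline L^{\,n}$ directly), which is precisely why Bost's hypotheses need no asymptotics in $h$. Second, the phrase that the $\det\cG$ twist ``vanishes by $\mathrm{SL}$-invariance'' hides the real normalization issue: the structure group is $\mathrm{GL}(\cG)$, so the invariant section descends to a section of $\cN^{\otimes N}\otimes(\det\cG)^{\otimes m}$ for a weight $m$ determined by the central character, and one must check that this normalized combination (the analogue of $\det(\cG_h)^{\otimes r}\otimes(\det\cG)^{-\otimes h\rk\cG_h}$ in Theorem \ref{CoHa}) is exactly the one whose degree is proportional to $rL^n-n\deg\cG\,(L_{|F})^{n-1}$. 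These are the technicalities you correctly identify as the heart of the matter; they are carried out in \cite{bost}, not in the present paper.
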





\begin{example}\label{slopeCH}
Let us prove the slope inequality for fibred surfaces via these methods.
Let $f\colon S\longrightarrow B$ be a relatively minimal fibred surface of genus $g\geq 2$.
Recall that the relative dualizing sheaf $\omega_f$ is nef \cite{Bv}.
The slope inequality for relatively minimal fibred surfaces now follows right away from Proposition \ref{conto1,5}, using the fact that the restriction of $\omega_f$ to the general smooth fibre is Hilbert  and Chow semistable (Remark \ref{risultati-hilbert-stability} above), and base-point free.
An alternative proof can be obtained using Proposition \ref{conto2}, by proving, as in \cite{CH}, that condition $(\star)$ holds.

Let us now refine the computation in the case of a relatively minimal {\em nodal} fibred surface.
In this case we have given in Example \ref{moving-surfaces} an explicit description of the moving and the fixed part of
$(\omega_f, f_*\omega_f)$. Recall that the moving part is $\cM\cong\nu^*\omega_f(-D)\otimes\cO_{\widetilde S}(-E)$, where $D$ is the union of all socket type components, $\nu\colon \widetilde S \longrightarrow S$ is the blow up of $S$ in the disconnecting nodes  of the fibres of $f$ that do not belong to a socket type component and $E$ is the exceptional divisor of $\nu$.
Let $\widetilde f=\nu\circ f$ be the induced fibration.
From the proof of Proposition \ref{conto1,5}, we can derive the following inequality:
$$
0\leq M^2-2\mu(\widetilde f_*\omega_{\widetilde f})\deg \omega_{|\widetilde F}=  K_f^2+D^2+E^2-2K_f D-4\frac{(g-1)}{g}\deg f_*\omega_f.
$$
Let us compute explicitly the term $D^2+E^2-2K_fD$. Let $n$ be the total number of disconnecting nodes contained in the fibres, $k$ the number of nodes lying on a socket type component and $l=n-k=-E^2$.
Let $r$ be the number of connected components of socket type in the fibres, so that $D=D_1+\ldots+D_r$ with the $D_i$'s connected and disjoint.
Then we have that $K_fD=-2r+k$, so that $D^2+E^2-2K_fD= 3k-4r+l$.
Note that the condition of relative minimality is equivalent to $2r\leq k$, so we obtain inequality
\begin{equation}\label{refslope1}
K_f^2\geq 4\frac{g-1}{g}\chi_f+n.\footnote{For the reader familiar with the moduli space of curves $\overline\cM_g$, this inequality means that the divisor $g\kappa_1-4(g-1)\lambda-g\sum_{i>0}\delta_i\sim (8g+4)\lambda - g\delta_0-2g\sum_{i>0}\delta_i$ is nef outside the boundary  $\partial \overline\cM_g$}
\end{equation}
In particular any fibred surface satisfying the slope {\em equality} necessarily has all fibres free from disconnecting nodes.
It is interesting to compare this result with the inequalities obtained via Xiao's method (Example \ref{slopeX}) and with Moriwaki's method (Example \ref{slopeM}).
\end{example}

\subsection{Some remarks on GIT stabilities and applications}\label{risultati-hilbert-stability}

It comes out the interest in understanding when a variety, endowed with a map in a projective space, is Hilbert or Chow semistable. The following is a (without any doubt non-complete) list of cases where Hilbert (or Chow) semistability is known. In this list any time we use the term ``stability'' without specification, we mean that both the Hilbert and the Chow (semi)stabilities are known to coincide.
\begin{itemize}
\item Homogeneous spaces embedded by complete linear systems are semistable; abelian varieties embedded by complete linear systems are semistable \cite{kempf}.
\item  Linear systems on curves: if $C$ is a smooth curve of genus $g\geq 2$, the canonical embedding is Chow semistable, and it is Chow stable as soon as $C$ is non-hyperelliptic. Any line bundle of degree $d\geq 2g+1$ induces a Chow stable embedding \cite{Mum}.
Deligne-Mumford stable curves are semistable for the linear system induced by the $m$-th power of the dualizing sheaf for $m\geq 5$ (\cite{Mum}, \cite{Gie}, \cite[Chap.4, Sec.C]{H-M}). See \cite{schubert} \cite{Hassett-Hyeon}  for curves Chow stable with respect to lower powers of the dualizing sheaf.
\item Morrison in \cite{Mor} studies the Chow stability of ruled surfaces in connection with the $\mu$-stability of the associated rank $2$ vector bundle: he proves that  if $\cE$ is a stable rank 2 bundle on a smooth curve $C$ then the ruled surface $\pi\colon \pr(\cE)\longrightarrow C$  is Chow stable with respect to the polarisation $\cO_{\pr(\cE)}(1) \otimes \pi_*\cO_C(k)$ for $k\gg 0$.  Seyyedali in \cite{machenomeciha} extends the results of Morrison to higher rank vector bundles and to higher dimensional bases. See also \cite{KellerRoss} for another generalization.
\item General  $K3$ surfaces:  a $K3$ surface with Picard number $1$ and degree at least $12$ is Hilbert semistable \cite{morrison-K3}.
\item Hypersurfaces:  in \cite[Prop. 4.2]{GIT} it is proven that  smooth hypersurfaces of $\pr^n$ of degree $\geq 3$ are stable. In \cite{Mum} it is studied the stability of (singular) plane curves and surfaces in $\pr^3$. A hypersurface $F \subset \pr ^r$ of degree $d \geq r+2$ and only log terminal singularities is Hilbert semistable \cite{Tian}.
\item Higher codimensional varieties: Lee   \cite{Lee} proved that a subvariety $F \subset \pr ^r$ of degree $d$ is Chow semistable as far as  the log canonical threshold  of its Chow form is greater or equal to $ \frac{r+1}{d} $ (resp. $>$ for stability).
In \cite{BMV} both the Chow and the Hilbert stability of curves of degree $d$ and arithmetic genus $g$ in $\pr^{d-g}$  are studied.
\end{itemize}
A lot of remarkable results --due to Gieseker, Viehweg and many others-- are known regarding {\em asymptotic} stability: given a line bundle $\cD$ and a linear subsystem $V\subseteq H^0(X,\cD)$, this is the stability of the couple $(\cD^{\otimes h}, V_h)$, for high enough $h$, where
$$V_h:= \mbox{Im} (\sym^hV\longrightarrow H^0(X, \cD^{\otimes h})).$$
In this case Hilbert and Chow stability  have been proved to be equivalent by Fogarty \cite{fog} and Mabuchi \cite{mabuchi}.
There are beautiful results due to Donaldson, Ross, Thomas and many others relating asymptotic Chow stability to differential geometry properties, such that the existence of a constant scalar curvature metric.
Unfortunately, if a bound is not known on the power of the line bundle, the Cornalba-Harris theorem does not give interesting consequences: if a couple $(\cG,\cL)$ is asymptotically semistable on a general fibre, then the Cornalba-Harris theorem implies that $L^n\geq 0$.

\medskip

On the other hand, it has come out recently, also in relation with the minimal model program for the moduli space of curves initiated in \cite{Hassett-Hyeon}, the interest in the stability of the $h$-th Hilbert point for {\em fixed $h$}. The main result obtained in this topic is that general canonical and bicanonical curves have  the $h$-th Hilbert point semistable for $h\geq 2$ \cite{AFS}.

The Cornalba-Harris method can be applied with this kind of assumption.
For instance we can prove the following result (cf \cite{FJ} for $h=2$).

\begin{proposition}\label{hilb-finito-prop}
Let $f\colon S\longrightarrow B$ be a relatively  minimal non-hyperelliptic fibred surface of genus $g\geq 2$.
Suppose that the $h$-th Hilbert point of a general fibre $F$ with its canonical sheaf is semistable (with $h\geq 2$).
Then the following inequality holds
\begin{equation}\label{h-finito}
K_f^2\geq 2\frac{2(g-1)h^2+(1-g)h-g}{gh(h-1)}\chi_f.
\end{equation}
\end{proposition}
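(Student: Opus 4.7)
The plan is to invoke the Cornalba--Harris theorem (Theorem \ref{CoHa}) at the single specified level $h$, applied to the data $\cL=\omega_f$, $\cG=f_*\omega_f$ over the curve $B$. Since by hypothesis the $h$-th generalized Hilbert point of $(F,\omega_F)$ is semistable for a general fibre $F$, the theorem (specialized as in inequality~(\ref{importante})) gives
$$
r\deg \cG_h \;\geq\; h\deg\cG\cdot \rk\cG_h,
$$
with $r=\rk\cG=h^0(F,\omega_F)=g$ and $\deg\cG=\deg f_*\omega_f=\chi_f$. The goal is then to bound the left-hand side above and the right-hand side below in terms of $K_f^2$ and $\chi_f$.

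First I would identify $\rk\cG_h$. Because $f$ is non-hyperelliptic, the general fibre $F$ is a smooth non-hyperelliptic curve of genus $g\geq 3$, so by Max Noether's theorem the multiplication map $\sym^h H^0(F,\omega_F)\to H^0(F,\omega_F^{\otimes h})$ is surjective for every $h\geq 1$. Hence $\rk\cG_h = h^0(F,\omega_F^{\otimes h})=(2h-1)(g-1)$ for $h\geq 2$ by Riemann--Roch. Next, since $\cG_h\subseteq f_*\omega_f^{\otimes h}$ and the two sheaves have the same generic rank (by the surjectivity just noted), the quotient is torsion on the smooth curve $B$, which yields
$$
\deg\cG_h \;\leq\; \deg f_*\omega_f^{\otimes h}.
$$
I would then compute the right-hand side via Riemann--Roch on $S$, combined with Leray: using relative minimality, Serre duality on the fibres gives $h^1(F,\omega_F^{\otimes h})=h^0(F,\omega_F^{\otimes(1-h)})=0$ for $h\geq 2$, hence $R^1f_*\omega_f^{\otimes h}=0$ and
$$
\deg f_*\omega_f^{\otimes h} \;=\; \frac{h(h-1)}{2}K_f^2 + \chi_f.
$$

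Plugging these in, the Cornalba--Harris inequality becomes
$$
g\left(\frac{h(h-1)}{2}K_f^2+\chi_f\right) \;\geq\; h(2h-1)(g-1)\,\chi_f,
$$
i.e.
$$
\frac{gh(h-1)}{2}K_f^2 \;\geq\; \bigl[h(2h-1)(g-1)-g\bigr]\chi_f
\;=\; \bigl[2(g-1)h^2+(1-g)h-g\bigr]\chi_f,
$$
which rearranges to the stated bound~(\ref{h-finito}). The only non-formal ingredients are the surjectivity of multiplication on the general fibre (Noether, using non-hyperellipticity) and the vanishing of $R^1f_*\omega_f^{\otimes h}$ for $h\geq 2$ (using relative minimality); I would expect the bookkeeping of the Riemann--Roch computation to be the most delicate step, though it is completely standard once these two facts are in place.
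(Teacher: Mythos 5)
Your proof is correct and follows essentially the same route as the paper's: apply Theorem \ref{CoHa} at level $h$ to $(\omega_f, f_*\omega_f)$, identify $\rk \cG_h=(2h-1)(g-1)$ and the degree term via Riemann--Roch, and rearrange. You merely supply details the paper leaves implicit (Max Noether's theorem, the vanishing of $R^1f_*\omega_f^{\otimes h}$, and the bound $\deg\cG_h\leq\deg f_*\omega_f^{\otimes h}$, which is in fact slightly more careful than the paper's asserted equality and still yields the inequality in the right direction).
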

\begin{proof}
With the usual notation, we choose $\cL=\omega_f$ and $\cG=f_*\omega_f$.
Then by the assumption, using Theorem \ref{CoHa}, we have that
$ \rk \cG \deg \cG_h-h \deg \cG\rk \cG_h\geq 0$.
By Riemann-Roch, $\rk \cG_h=(2h-1)(g-1)$, and $\deg \cG_h=\frac{h(h-1)}{2} K_f^2+\chi_f$, and the computation is immediate.
\end{proof}
The computations with higher powers of the relative canonical sheaf gives worse inequalities than the slope one.

\begin{remark}
By a result of Fedorchuck and Jensen \cite{FJ} (that improves the result in \cite{AFS}), the best inequality in equation (\ref{h-finito}), reached for $h=2$, holds for relatively minimal fibred surfaces whose general fibres are non-hyperelliptic curves of genus $g$ whose canonical image does not lie on a quadric of rank 3 or less. In particular this is the case for fibred surfaces of even genus whose general fibres are trigonal with Maroni invariant $0 $ (ibidem. and \cite{BS2}).
It is quite interesting to notice that this very same bound is obtained by Konno in \cite[Lemma 2.5]{konnohyp}  under the assumption that the the pushforward sheaf $f_*\omega_f$ is $\mu$-semistable.
\end{remark}

From the above proposition we can derive a new proof of the following result (cf. \cite[ Theorem. 4.12]{CH} and \cite[Prop. 2.4]{LS}). The same result follows from the computation contained in Remark \ref{terminedopo}.
\begin{corollary}
If a relatively minimal non-locally trivial fibred surface of genus $g\geq 2$ reaches the slope inequality, then it is hyperelliptic.
\end{corollary}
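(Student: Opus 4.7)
The plan is a proof by contradiction using Proposition \ref{hilb-finito-prop}. I would assume $f\colon S\to B$ is relatively minimal, non-locally trivial, achieves slope equality $K_f^2=\tfrac{4(g-1)}{g}\chi_f$, and has non-hyperelliptic general fibre; I then intend to derive a strictly better lower bound on $K_f^2/\chi_f$.

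The case $g=2$ is vacuous, since every smooth genus-$2$ curve is hyperelliptic, so I reduce at once to $g\geq 3$. Under the non-hyperellipticity assumption, the canonical model $(F,\omega_F)$ of a general fibre is a smooth non-hyperelliptic canonically embedded curve, and hence Chow stable by Mumford's classical theorem (cited in the list of \ref{risultati-hilbert-stability}). By Morrison's implication recalled in Remark \ref{Chow-Hilbert}, Chow stability forces Hilbert stability in the sense of Definition \ref{genhilb}, so I can pick a finite integer $h\geq 2$ for which the $h$-th Hilbert point of $(F,\omega_F)$ is semistable.

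Applying Proposition \ref{hilb-finito-prop} to this $h$ yields
$$K_f^2\geq 2\,\frac{2(g-1)h^2+(1-g)h-g}{gh(h-1)}\,\chi_f.$$
A direct rearrangement gives
$$2\,\frac{2(g-1)h^2+(1-g)h-g}{gh(h-1)}-\frac{4(g-1)}{g}=\frac{2((g-1)h-g)}{gh(h-1)},$$
and for $h\geq 2$ and $g\geq 3$ the numerator satisfies $(g-1)h-g\geq g-2\geq 1>0$. Combined with $\chi_f>0$ (which follows from non-local triviality, see Remark \ref{slopeinequality}), this yields $K_f^2>\tfrac{4(g-1)}{g}\chi_f$, contradicting the assumed slope equality. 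Hence the general fibre must be hyperelliptic.

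There is essentially no serious obstacle: the only non-bookkeeping input is Mumford's stability of non-hyperelliptic canonical curves together with Morrison's Chow-to-Hilbert implication, both already invoked in the paper. Beyond this, one needs only the elementary coefficient comparison above and the isolation of the trivial $g=2$ case. (As noted parenthetically in the excerpt, an alternative route is to use Remark \ref{terminedopo} directly: under slope equality, the leading term of $\rk\cG\deg\cG_h-h\deg\cG\rk\cG_h$ vanishes and the subleading term equals $-\tfrac12 K_f^2\,h<0$, whereas Mumford--Morrison forces this expression to be $\geq 0$ for infinitely many $h$, giving the same contradiction.)
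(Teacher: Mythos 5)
Your proof is correct and follows essentially the same route as the paper: apply Proposition \ref{hilb-finito-prop} and observe that the resulting coefficient strictly exceeds $4(g-1)/g$ for every finite $h\geq 2$. Your explicit rearrangement of the coefficient (together with the separate treatment of $g=2$ and the Mumford--Morrison justification that some finite Hilbert point is semistable) is in fact slightly more careful than the paper's appeal to strict monotonicity in $h$, which fails for small $h$, but the conclusion and the underlying argument are the same.
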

\begin{proof}
Observe that the function of $h$ appearing in inequality (\ref{h-finito}) is strictly decreasing and --of course-- it tends to the ratio of the slope inequality $4(g-1)/g$ for $h\mapsto \infty$.
So, for any non-hyperelliptic fibration in the conditions of the theorem, a strictly stronger bound than the slope one is satisfied.
\end{proof}

\subsubsection*{A new stratification of $\cM_g$}\label{hilbertstratification}
It is widely believed (see for instance \cite{BS2}, \cite{konnocliff}) that there should exist a lower bound for the slope of fibred surfaces increasing with the gonality of the general fibres (under some genericity assumption). This conjecture, however, is only proved for some step: hyperelliptic fibrations (the slope inequality), trigonal fibrations  \cite{BS2}, \cite{AnAn} and fibrations with general gonality \cite{konnocliff}, \cite{HE}. Recently Beorchia and Zucconi \cite{Beo-Zucc} have proved some results also on fourgonal fibred surfaces.

Let us consider the following open subsets of $\cM_g$
$$
\cS_h:=\left\{ [C]\in \cM_g \mbox{ such that the $k$-th Hilbert point is semistable for }k\geq h\right\}.
$$
Clearly $\cS_i\subseteq \cS_j$ for $i\leq j$, and for some $m\in \mathbb N$ the sequence becomes stationary, i.e. $\cS_i= \cS_j$ for every $i, j\geq m$ (cf. \cite{Gie}).
If we consider the subsets $\cS_2, \cS_3\setminus \cS_2, \ldots , \cS_m\setminus \cS_{m-1}$, it seems possible that these provide an alternative stratification of $\cM_g$ minus the hyperelliptic locus.
For such a stratification, a lower bound for the slope increasing with the dimension of the strata would be provided by Proposition \ref{hilb-finito-prop}.
However, it does not seem clear, at least to the authors, to give a geometrical characterization of the curves lying in $\cS_i\setminus \cS_{i-1}$, and an estimate on the codimensions of these strata.


\subsection{Xiao's method: the Harder-Narashiman filtration}\label{XIAO}

As we have seen in the previous section, $\mu$-semistability of $\cG$ implies $f$-positivity.
What about the case when the sheaf $\cG$ is not semistable as a vector bundle?
We describe here a method  based on Miyaoka's Thorem \ref{teomiyaoka}, which exploits the Harder-Narashimann filtration of the sheaf $\cG$.

The main idea is given by Xiao in \cite{X}, where he uses the method in the case of fibred surfaces.
Later on,  Ohno \cite{ohno} and Konno \cite{konnotrig} extended the method to higher dimensional fibred varieties over curves.
We present here a compact version of the general formula (see Proposition \ref{xiaoinequality} below).

\medskip

We need to recall the definition of the {\em  Harder-Narashimann filtration} of a vector bundle $\cG$ over a curve $B$:
it is the unique filtration of subbundles
$$0=\cG_0\subset \cG_1\subset \ldots \subset \cG_l=\cG$$
satisfying the following assumptions
\begin{itemize}
\item for any $i=0, \ldots l$ the sheaf $\cG_i/\cG_{i-1}$ is $\mu$-semistable;
\item if we set $\mu_i:= \mu(\cG_i/\cG_{i-1})$, we have that $\mu_i>\mu_{i-1}$.
\end{itemize}
Note that $\mu_1>\mu(\cE)>\mu_l$, unless $\cG$ is $\mu$-semistable, in which case $1=l$ and these numbers are equal. If $H$ is a divisor associated to the tautological line bundle of $\mathbb{P}(\cG)$ and $\Sigma$ is a general fibre then an $\mathbb{R}$-line bundle $H-x\Sigma$ is pseudoeffective if and only if $x \leq \mu_1$ \cite[Cor. 3.7]{Nakayama} and it is nef if and only if $x \leq \mu_l$ \cite{miyaoka}.

\medskip

As usual, consider an $n$-dimensional fibred variety  $f\colon X \longrightarrow B$ and be a line bundle  $\cL$ on $X$.
Let $F$ be a general smooth fibre of $f$.
Consider ${\mathcal G}\subseteq f_*\cL$ a subbundle, and its corresponding Harder-Narashiman filtration as above.
Set $r_i=\rm{rank}\cG_i$.

For each $i=1,...,l$, we consider the pair $(\cL,{\cG}_i)$ as in \ref{firstdefinitions} and a common resolution of indeterminacies $\nu : {\widetilde X} \longrightarrow X$.
Let $M_i$ be the moving part of $(\cL, {\cG}_i)$, and let $N_i=M_i-\mu _i F$.
By Miyaoka's theorem \ref{teomiyaoka} we have that $N_i$ is a nef $\mathbb{Q}-$divisor (not necessarily effective).
The linear system $P_i:={N_i}_{|\widetilde F}$
is free from base points and induces a map $\phi _i: \widetilde F \longrightarrow {\mathbb P}^{r_i-1}$. By construction we have $P_l \geq P_{l-1} \geq ... \geq P_2 \geq P_1$.
Define $a_{l+1}=0$ and $N_{l+1}=N_l$. Then, we can state the generalized Xiao's inequality as follows. We refer to \cite{konnotrig} for proofs.

For any set of indexes $I=\{i_1,...,i_m\} \subseteq \{1,2,....,l\}$, define $i_{m+1}=l+1$ and consider the partition of $I$ given by
$$
I_s=\{i_k \, | \, k=1,...,m  \, \, {\rm such} \ {\rm that} \,\, {\rm dim}\phi _{i_k}({\widehat F})=s \, \}.
$$
Define now $b_{n}=l+1$ and decreasingly

$$b_s=\left\{
           \begin{array}{cc}
             {\rm min} I_s & {\rm if} \,\, I_s \neq \emptyset \\
             b_{s+1} & {\rm otherwise}. \\
           \end{array}\right.$$

\begin{proposition}[Xiao, Konno]\label{xiaoinequality}
With the above notation, assume the $\cL$ and $\cG$ are nef. Then the following inequality holds
\begin{equation}\label{XiaoInequality}
L^n=(\nu ^* L)^n \geq N_{l+1}^n\geq\left( \sum _{s=n-1}^{1} (\prod_{n-1 \geq k>s}P_{b_k}) \sum_{j \in I_s}(\sum_{r=0}^{s}P_{j}^{s-r} P_{j+1}^{r})\right)(\mu_{j}-\mu_{j+1}).
\end{equation}
\end{proposition}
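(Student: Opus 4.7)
The plan is to derive the inequality in three stages: first establish that the divisors $N_i$ are nef via Miyaoka's theorem; second, reduce the problem to a lower bound on $N_{l+1}^n$ on $\widetilde X$; and third, carry out the multilinear expansion that produces the right-hand side.

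For the first stage I would argue as follows. For each $i$, the sub-bundle $\cG_i$ carries the induced Harder--Narasimhan filtration $0\subset \cG_1 \subset \cdots \subset \cG_i$, whose last graded piece has slope $\mu_i$, the minimal HN slope of $\cG_i$. On the relative projective bundle $\pi_i\colon \pr(\cG_i)\to B$ with tautological divisor $H_i$ and general fiber $\Sigma_i$, Theorem \ref{teomiyaoka} (applied step by step to the HN quotients, or through Nakayama's refinement mentioned in the text) gives that $H_i-\mu_i\Sigma_i$ is nef. Pulling back along the morphism $\widetilde X\to \pr(\cG_i)$ induced by the moving part of $(\cL,\cG_i)$ sends $H_i$ to $M_i$ and $\Sigma_i$ to $F$, hence $N_i=M_i-\mu_i F$ is nef on $\widetilde X$.

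For the second stage, since $\cL$ is nef and $\nu^*L-M_l$ is effective (the fixed part of $(\cL,\cG_l)=(\cL,\cG)$), standard intersection theory for nef classes gives $L^n=(\nu^*L)^n\geq M_l^n$. Writing $M_l=N_{l+1}+\mu_l F$ and using $F^2=0$, one expands $M_l^n = N_{l+1}^n + n\mu_l\, N_{l+1}^{\,n-1}\cdot F$; since $N_{l+1}|_F=P_l$ is nef on $\widetilde F$ and $\mu_l\geq 0$ (the nefness of $\cG$ forces all HN slopes to be non-negative), this reduces us to proving $N_{l+1}^n \geq$ (the final expression).

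For the third stage, which is the heart of the argument, I would use the telescoping identity
$$N_{l+1}=N_1+\sum_{i=1}^{l}(N_{i+1}-N_i),\qquad N_{i+1}-N_i=(M_{i+1}-M_i)+(\mu_i-\mu_{i+1})F,$$
noting that $M_{i+1}-M_i$ restricts to $P_{i+1}-P_i$, which is base-point-free on $\widetilde F$. One then expands $N_{l+1}^n$ multilinearly; because every $N_i$ is nef and the differences $N_{i+1}-N_i$ have non-negative fiberwise behavior and non-negative "vertical" coefficient $\mu_i-\mu_{i+1}$, each monomial in the expansion contributes non-negatively. Keeping only the monomials in which exactly one factor of $F$ appears (the others vanish because $F^2=0$) and collecting by the $\mu_i-\mu_{i+1}$ coefficient produces intersection numbers on $\widetilde F$ of the form $P_j^{s-r}P_{j+1}^r$.

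The principal obstacle is the combinatorial bookkeeping encoded in the partition $\{I_s\}$ and the definition of $b_s$. This structure is needed because a product $P_{j_1}\cdots P_{j_{n-1}}$ on $\widetilde F$ vanishes unless the factors collectively "see" enough directions: if $\dim\phi_j(\widetilde F)=s$, then $P_j^{s+1}=0$ and $P_j$ can contribute at most to an $s$-fold product before one must introduce a strictly larger $P_{b_{s+1}}$. The index $b_s$ is chosen as the smallest element of $I_s$ precisely so that the chain $P_{b_{n-1}},P_{b_{n-2}},\ldots,P_{b_s}$ gives a valid nef "filler" for the remaining $n-1-s$ slots in each monomial. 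Making this rigorous calls for an induction on either the dimension $n$ or the filtration length $l$, grouping the surviving monomials stratum by stratum; the technical execution follows Konno's treatment in \cite{konnotrig}, to which the statement explicitly defers.
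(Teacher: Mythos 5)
The paper does not actually prove this proposition: it only sets up the notation (the nefness of the $N_i$, the partition $\{I_s\}$ and the indices $b_s$) and then explicitly defers to Konno \cite{konnotrig}, so there is no internal proof to match yours against. Your first two stages reproduce the paper's setup correctly: $N_i=M_i-\mu_iF$ is nef because the minimal Harder--Narasimhan slope of $\cG_i$ is $\mu_i$ and one pulls back the nef class $H_i-\mu_i\Sigma_i$ along the morphism induced by the moving part; and the reduction $L^n=(\nu^*L)^n\geq M_l^n\geq N_{l+1}^n$ via effectivity of the fixed part, nefness of $\cL$, and $\mu_l\geq 0$ is sound.

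The gap is in your stage 3. The claim that in the multilinear expansion of $N_{l+1}^n=(N_1+\sum_i(N_{i+1}-N_i))^n$ ``each monomial contributes non-negatively'' is false. A monomial containing two or more of the differences $\Delta_i=(M_{i+1}-M_i)+(\mu_i-\mu_{i+1})F$ produces, upon expansion, terms such as $N_1^{a}(M_{j_1+1}-M_{j_1})(M_{j_2+1}-M_{j_2})\cdots$, i.e.\ intersections of several \emph{effective but non-nef} divisors (differences of fixed parts, typically supported on exceptional loci), and these can be negative; they contain no factor of $F$, so they are not killed by $F^2=0$. The mechanism that actually works (Xiao's lemma and Konno's higher-dimensional version) is to telescope the top self-intersections themselves, writing $N_{i_{k+1}}^n-N_{i_k}^n=(N_{i_{k+1}}-N_{i_k})\sum_{r=0}^{n-1}N_{i_k}^rN_{i_{k+1}}^{n-1-r}$ over consecutive elements of $I$, so that every term carries \emph{exactly one} non-nef factor, which is then bounded below by $(\mu_{i_k}-\mu_{i_{k+1}})F$ against a product of $n-1$ nef divisors. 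The same device, stopping the descent $Z^n\geq Z^{n-1}D+aZ^{n-1}F\geq\cdots$ before reaching $D^n$ so that factors of the larger nef divisor survive, is what allows the filler classes $P_{b_k}$ to appear when $\dim\phi_j(\widetilde F)=s<n-1$ forces $P_j^{s+1}=0$; you correctly identify why the fillers are needed, but not how they legitimately enter a lower bound. Since you, like the paper, ultimately defer the execution to \cite{konnotrig}, the proposal is not wrong in outcome, but the one argument you do supply for the core inequality would not go through as written.
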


\begin{remark}
As we see Xiao's method does not give as a result $f$-positivity, but an inequality for the top self-intersection $L^n$ that has to be interpreted case by case. On the other hand, it basically only has one hypothesis: the nefness of $\cL$ and of $\cG$. However, as we will see in \ref{LS&X} we can derive results on $\cG=f_*\cL$ even if it is not a nef vector bundle.
One of the contributions of this article is to frame Xiao's result in a more general setting, and to prove that with the right stability condition in the couple  $(F, \cG_{|F}) $, for $F$ general, Xiao's method produces $f$-positivity, at least in the case of dimension $2$.
\end{remark}

\begin{example}\label{slopeX}
Let us describe how inequality (\ref{XiaoInequality}) implies the slope inequality in the case of fibred surfaces.
We use  the above formula for $n=2$, $\cL=\omega _f$, $\cG=f_*\omega_f$ and the sets of indexes $I=\{1,...,l\}$ and $I'=\{1,l\}$. If we call $d_i={\rm deg}P_i$ inequality (\ref{XiaoInequality}) becomes, respectively
$$
K_f^2 \geq \sum_{i=1}^{l}(d_i+d_{i+1})(\mu _i - \mu_{i+1}),
$$
$$
K_f^2 \geq (d_1+d_l)(\mu _1 - \mu _l)+2d_l \mu_l\geq d_l(\mu_1+\mu_l)=(2g-2)(\mu_1+\mu_l).
$$
Let us note that by Cifford's theorem we have inequality $d_i \geq 2r_i-2$.
Observing now that $r_{i+1} \geq r_i+1$, and that ${\rm deg} f_*\omega_f=\sum_{i=1}^{l}r_i(\mu_ i-\mu _{i+1})$, we  obtain straight away the slope inequality
$$
K_f^2 \geq 4\frac{g-1}{g}{\rm deg}f_*\omega_f.
$$
In fact, the above proof gives an inequality  for $N_l^2$. In the case of nodal fibrations, using the same notations as in Example \ref{slopeCH}, since $N_l=\nu^*(K_f(-D))(-E)$, we obtain the inequality $N_l^2\leq K_f^2-n$, which gives the very same inequality (\ref{refslope1}) obtained via the Cornalba-Harris method.
\end{example}

\begin{example}\label{xiao3folds}
It could be interesting to have explicitly written the case $n=3$ for the complete set of indexes $\{1,...,l\}$. Assume that $N_l$ induces a generically finite map on the surface $F$. Hence we have $I_{2}\neq \emptyset$ and so
$$L^3 \geq 3P_l^2\mu_l+(P_l^2+P_l P_{l-1}+P_{l-1}^2)(\mu_{l-1}-\mu_l)+...$$$$...+(P_{b_2+1}^2+P_{b_2+1}P_{b_2}+P_{b_2}^2)(\mu_{b_2}-\mu_{b_2+1})+
$$$$+P_{b_2} [(P_{b_2}+P_{b_2-1})(\mu_{b_2-1}-\mu_{b_2})+.....+(P_{b_1+1}+P_{b_1})(\mu_{b_1+1}-\mu_{b_1})].$$

\noindent Observe that $b_1=1$ except for the case $r_1=1$ where $b_1=2$.

Since the linear systems induced by $P_i$ for $i=b_1,\ldots ,b_2-1$ map $F$ onto curves $C_i$, we have a chain of projections between these curves in such a way that the fibration part of the Stein factorization of the maps $F \longrightarrow C_i$ are the same. Hence we have a fibration
$$\pi: F \longrightarrow C.$$
\noindent Call $D$ the general fibre, and let $Q_i$ be base point free linear systems on $C$ such that $P_i=\pi^*Q_i$ of rank $h^0(C, Q_i)\geq r_i=\rk \cG_i$ and degrees which we call $d_i$. Writing this information and using that for all $j$
$$P_{j+1}^k P_j^l \geq  P_{j+1}^{k-1} P_j^{l+1},$$ since $P_j \leq P_{j+1}$ and they are nef, we obtain a simplified (and weaker) version of the previous inequality:
\begin{equation}\label{xiao3foldssimplificada}
L^3 \geq 3P_l^2\mu_l+P_{l-1}^2(\mu_{l-1}-\mu_{l})+...+P_{b_2}^2(\mu_{b_2}-\mu_{b_2+1}))+$$$$+2\lambda(d_{b_2-1}(\mu_{b_2-1}-\mu_{b_2})+...+d_{b_1}(\mu_{b_1}-\mu_{b_1+1})),
\end{equation}
\noindent where $\lambda=DP_{b_2}$.

\end{example}


\subsection{Moriwaki's method: $\mu$-stability on the fibres}\label{MORIWAKI}

In this paragraph we shall restrict ourselves to the case $n=2$; see Remark \ref{bogomolov-high-dim} below for a discussion on higher-dimensional results.
Let $X=S$ be a smooth surface. We need the following fundamental result due to Bogomolov, which can be found in \cite{B2}.

\begin{definition} Let ${\cal E}$ be a torsion free sheaf over $S$. The class
$$\Delta ({\cal E}):= 2\,\rk {\cal E} c_2({\cal E})-(\rk {\cal E}-1)c_1^2({\cal E})\in A^2_{\Q}(S)$$
is the {\em discriminant} of the vector bundle $\cE$. Let $\delta(\cE)$ denote its degree.
\end{definition}

\begin{theorem}[Bogomolov Instability Theorem]\label{BIT}
With the above notation, if $\delta({\cal E})<0$ then there exists a saturated subsheaf ${\cal F}\subseteq {\cal E}$ such that the class
$$D=\rk {\cal F}\, c_1({\cal E})-\rk {\cal E}c_1({\cal F})$$ belongs to the positive cone $K^+(S)$ of $\mbox{\upshape Pic}_\Q(S)$.
\end{theorem}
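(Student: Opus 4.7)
The plan is to follow the classical projective-bundle argument. Consider the projectivization $\pi\colon Y = \pr(\cE) \to S$, a projective variety of dimension $r+1$, where $r = \rk\cE$, with tautological line bundle $\cO_Y(1)$ (Grothendieck convention $\pi_*\cO_Y(n) = \sym^n\cE$). Let $\xi = c_1(\cO_Y(1))$ and introduce the $\Q$-divisor $D_0 := r\xi - \pi^*c_1(\cE)$ on $Y$. A computation, expanding $D_0^{r+1}$ by the binomial theorem and pushing down to $S$ via the identities $\pi_*\xi^{r-1+k} = s_k(\cE)$ (Segre classes), yields
$$
D_0^{r+1} = -\lambda\,\delta(\cE)
$$
for an explicit positive rational constant $\lambda$ depending only on $r$. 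The hypothesis $\delta(\cE) < 0$ therefore forces $D_0^{r+1} > 0$.

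Next I would upgrade this positivity of the top self-intersection to asymptotic growth of global sections. Since $\cO_Y(1)$ is ample on each fibre of $\pi$, the higher direct images $R^i\pi_*\cO_Y(mD_0)$ vanish for $i\geq 1$ and $m\gg 0$, and the Leray spectral sequence gives
$$
\chi(Y, mD_0) = \chi\bigl(S, \sym^{rm}\cE\otimes (\det\cE)^{-\otimes m}\bigr).
$$
By asymptotic Riemann-Roch on $Y$ this Euler characteristic is asymptotic to $m^{r+1}D_0^{r+1}/(r+1)!$, which is positive and of maximal order $r+1$. After controlling $h^2$ on the base surface $S$ via Serre duality, one concludes that $h^0(S,\sym^{rm}\cE\otimes (\det\cE)^{-\otimes m})$ grows like $m^{r+1}$.

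The hardest step is to leverage this abundance of sections into a saturated subsheaf $\cF\subseteq\cE$ for which the class $D := \rk\cF\cdot c_1(\cE)-r\cdot c_1(\cF)$ lies in the positive cone $K^+(S)$. Fix an ample divisor $\cH$ on $S$. If $\cE$ were $\mu_\cH$-semistable then, in characteristic zero, $\sym^{rm}\cE$ would also be semistable, and a standard argument combining the semistable form of Bogomolov's inequality with asymptotic Riemann-Roch would bound $h^0(\sym^{rm}\cE\otimes(\det\cE)^{-\otimes m})$ by $O(m^r)$, contradicting the growth established above. Hence $\cE$ admits a $\mu_\cH$-destabilizing subsheaf, and we take $\cF\subseteq\cE$ to be the maximal one (the first step of the Harder-Narasimhan filtration); by construction $\mu_\cH(\cF)>\mu_\cH(\cE)$, i.e.\ $D\cdot\cH>0$. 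To conclude $D^2>0$ we argue by induction on $\rk\cE$: since $\cF$ is $\mu_\cH$-semistable of strictly smaller rank, by induction Bogomolov's inequality yields $\delta(\cF)\geq 0$. A direct expansion of $D^2$ using the inputs $\delta(\cE)<0$, $\delta(\cF)\geq 0$ and the slope gap then gives $D^2>0$. The two positivity conditions $D\cdot\cH>0$ and $D^2>0$ together place $D$ on the positive component of the cone, i.e.\ $D\in K^+(S)$.

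The main obstacle is this last step: transforming the asymptotic estimate on $h^0$ into a destabilizing saturated subsheaf of $\cE$ itself, and verifying the stronger statement $D\in K^+(S)$ rather than merely $D\cdot\cH>0$. The induction on $\rk\cE$ is essential to avoid circularity when invoking Bogomolov's inequality on the subsheaf $\cF$.
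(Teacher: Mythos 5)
First, a point of comparison: the paper does not prove this theorem at all --- it quotes it from Bogomolov's paper \cite{B2} and uses it as a black box --- so your proposal can only be measured against the classical argument. Your outline follows that classical route faithfully up to the production of a destabilizing subsheaf: the computation $D_0^{r+1}=-\frac{r^r}{2}\delta(\cE)$ is correct, and the passage to $h^0(S,\cF_m)\gtrsim m^{r+1}$ for $\cF_m:=\sym^{rm}\cE\otimes(\det\cE)^{-\otimes m}$ is standard, although ``controlling $h^2$ via Serre duality'' hides a real step: $h^2(S,\cF_m)=h^0(S,\cF_m^{\vee}\otimes\omega_S)$ is essentially the analogous space of sections for the dual bundle $\cE^{\vee}$, so one must allow replacing $\cE$ by $\cE^{\vee}$ and dualize the destabilizing object at the end. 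Also, the contradiction with semistability is better obtained from the elementary bound $h^0=O(\rk\cF_m)=O(m^{r-1})$ for a $\mu_{\cH}$-semistable sheaf of slope zero (via restriction to curves), rather than by invoking ``the semistable form of Bogomolov's inequality'', which is dangerously close to what is being proved.

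The genuine gap is in the final step. For $0\to\cF\to\cE\to\cQ\to 0$ with ranks $r',r,r''$ the exact identity is
$$D^2 \;=\; r\,r'\,r''\left(\frac{\Delta(\cF)}{r'}+\frac{\Delta(\cQ)}{r''}-\frac{\Delta(\cE)}{r}\right),$$
so the inputs you list ($\delta(\cE)<0$, $\delta(\cF)\geq 0$ and the slope gap) do not suffice: the slope gap does not enter $D^2$ at all, and you need a lower bound on $\Delta(\cQ)$. Since the Harder--Narasimhan filtration of $\cE$ may have more than two steps, $\cQ=\cE/\cF$ need not be semistable, and $\Delta(\cQ)\geq 0$ is not automatic; if $\Delta(\cQ)$ is very negative the right-hand side can fail to be positive. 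The standard repair is either to run the induction on the quotient as well (if $\Delta(\cQ)<0$, apply the instability theorem to $\cQ$, pull the resulting class back to $\cE$, and combine the positive classes using convexity of $K^+(S)$), or to work with the full HN filtration and the corresponding identity in terms of all the graded pieces; either way this is a missing idea, not a routine verification. Finally, a sign point: with $D=\rk\cF\,c_1(\cE)-\rk\cE\,c_1(\cF)$ and $\mu_{\cH}(\cF)>\mu_{\cH}(\cE)$ one gets $D\cdot\cH<0$, not $>0$; the class that lies in $K^+(S)$ is $\rk\cE\,c_1(\cF)-\rk\cF\,c_1(\cE)$, which is also the sign the paper implicitly uses when it applies the theorem in Proposition \ref{corollary bogomolov}. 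Since $D^2$ is insensitive to the sign this is cosmetic, but as written your two positivity conditions contradict each other.
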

\noindent Recall that the positive cone $K^+$ is defined as follows: consider the (double) cone
$$K(S)=\{A\in N^1(S)_\Q\mid A^2>0\}\subset N^1(S)_\Q.$$
The cone $K^+(S)$ is the connected component of $K(S)$ containing the ample cone.

\medskip

\begin{remark}\label{H-stabile}
Recall the definition of semistable sheaf in higher dimension: if $X$ is a variety of dimension $n$ and $\cF$ a locally free sheaf on $X$, let $\cH$ be an ample line bundle on $X$. We say that $\cF$ is $\cH$-(semi)stable if for any proper subsheaf $0\not =\cR\subset \cF$
$$
\frac{c_1(\cR)\cdot H^{n-1}}{\rk \cR}\leq \frac{c_1(\cF)\cdot H^{n-1}}{\rk \cF} \quad\quad  (\mbox{resp. } <),
$$
where $H$ is the class of $\cH$.
In particular from the strong instability condition provided by the theorem above, we have that if $\cE$ is $\cH$-semistable with respect to {\em any} ample line bundle $\cH$ on $S$, then $\delta(\cE)\geq 0$.
\end{remark}


The argument of Moriwaki relies on two key observations. The first is the following: if the surface $S$ carries a fibration, then, in order to ensure the non-negativity of $\delta(\cE)$ for a vector bundle $\cE$, one can assume that $\cE$ is semistable {\em on the general fibres} of $f$.

\begin{proposition}[\cite{Mor1} Theorem 2.2.1]\label{corollary bogomolov}
Let us consider a fibred surface $f\colon S\longrightarrow B$.
Let $\cE$ be a sheaf on $S$ such  that the restriction of $\cE$ on a general fibre of $f$ is a $\mu$-semistable sheaf.
Then $\delta(\cE)\geq 0$.
\end{proposition}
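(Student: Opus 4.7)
The plan is a direct contradiction argument combining Bogomolov's Instability Theorem (Theorem~\ref{BIT}) with the Hodge Index Theorem. Assume $\delta(\cE)<0$. Bogomolov then yields a saturated proper subsheaf $\cF\subsetneq\cE$ such that
$$D:=\rk\cF\cdot c_1(\cE)-\rk\cE\cdot c_1(\cF)$$
satisfies $D^{2}>0$ and lies in the positive cone (equivalently, $\cF$ destabilizes $\cE$ with respect to every ample polarization of $S$).

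Next I would evaluate $D$ against a general fibre $F$ of $f$, using the direct expansion
$$D\cdot F=\rk\cF\cdot\deg(\cE|_F)-\rk\cE\cdot\deg(\cF|_F)=\rk\cE\cdot\rk\cF\bigl(\mu(\cE|_F)-\mu(\cF|_F)\bigr),$$
so that the sign of $D\cdot F$ is exactly what controls destabilization of $\cE|_F$ by $\cF|_F$. The crucial step is the Hodge Index Theorem: since $D^{2}>0$, any class $E\neq 0$ with $D\cdot E=0$ must satisfy $E^{2}<0$. Applied to $E=F$, which is nef with $F^{2}=0$ and nonzero in $N^{1}(S)$, this forces $D\cdot F\neq 0$; combining with the fact that $F$ lies in the closure of the ample cone while $D$ is in the positive cone, the strict sign of $D\cdot F$ matches the sign of $D\cdot H$ for ample $H$, yielding $\mu(\cF|_F)>\mu(\cE|_F)$.

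Finally I would check that $\cF|_F$ is an admissible test object for $\mu$-semistability of $\cE|_F$: since $\cF$ is saturated in $\cE$, the quotient $\cE/\cF$ is torsion-free, and for $F$ chosen outside a countable union of proper closed subsets of $B$, the restriction $\cF|_F$ injects into $\cE|_F$ as a nonzero proper subsheaf with the rank and degree appearing in the formula above. This strict destabilizer on the general fibre contradicts the $\mu$-semistability hypothesis. The main obstacle is precisely this last verification---that the Bogomolov subsheaf produced on $S$ descends to a genuine strict destabilizer on a generic fibre---which is where the saturation of $\cF$ and the genericity of $F$ (via generic flatness of $\cE/\cF$) do the essential work; the rest of the argument is a clean application of Hodge Index.
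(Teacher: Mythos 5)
Your argument is essentially the paper's own proof: assume $\delta(\cE)<0$, invoke Bogomolov's Instability Theorem to produce a saturated subsheaf $\cF$ whose associated class $D$ has $D^2>0$ and lies in the positive cone, apply the Hodge Index Theorem to the nef class $F$ with $F^2=0$ to force $D\cdot F>0$, and conclude that $\cF_{|F}$ destabilizes $\cE_{|F}$ on a general fibre (your closing check that saturation and genericity make $\cF_{|F}$ a legitimate proper nonzero subsheaf is a detail the paper leaves implicit, but it is the same argument). The one caveat is a sign slip, which you share with the paper's own statement of Theorem \ref{BIT}: with $D=\rk\cF\, c_1(\cE)-\rk\cE\, c_1(\cF)$ your displayed identity gives $D\cdot F=\rk\cE\,\rk\cF\,(\mu(\cE_{|F})-\mu(\cF_{|F}))>0$, i.e.\ $\mu(\cF_{|F})<\mu(\cE_{|F})$, the opposite of what you then assert; the standard form of Bogomolov's theorem places $\rk\cE\, c_1(\cF)-\rk\cF\, c_1(\cE)$ in the positive cone, and with that orientation your conclusion $\mu(\cF_{|F})>\mu(\cE_{|F})$ (and the contradiction with fibrewise semistability) is correct.
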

\begin{proof}
Suppose by contradiction that $\delta ({\cal E})<0$. Then by the Bogomolov Instability Theorem there exists a saturated subsheaf ${\cal F}\subseteq {\cal E}$ such that the divisor
$D=\rk {\cal F}\, c_1({\cal E})-\rk {\cal E}c_1({\cal F})$ satisfies that $D^2>0$.
As a fibre $F$ is nef, and $F^2=0$, by the Hodge Index Theorem \cite[sec.IV, Cor. 2.16]{BHPVdV}, we have that
 $$
 0< D\cdot F=\rk {\cal E} \deg {\cal F}_{|F}-\rk {\cal F}\deg {\cal E}_{|F}.
 $$
 So ${\cal F}_{|F}$ is a destabilizing subsheaf of ${\cal E}_{|F}$, against the assumption.
\end{proof}

\begin{remark}\label{paragonestabil}
It is worth noticing that the hypotesis of the above proposition that the restriction $\cE_{|F}$ is $\mu$-semistable on the general fibres $F$ does not imply, neither is implied, by some semistability of the sheaf $\cE$ on $S$. Indeed, we can say only the following:
\begin{itemize}
\item Let $\cH$ be an ample line bundle on $S$ and $C$ be a general  curve in $|\cH^{\otimes d}| $ with $d\geq 1$. Suppose that $\cE_{|C}$ is $\mu$-semistable, then $\cE$ is $\cH$-semistable. Indeed if $\cF$ would be an $\cH$-destabilizing subsheaf of $\cE$, then $\cF_{|C}$ would be destabilizing for $\cE_{|C}$, because
$$\mu (\cF_{|C})=\frac{\deg \cF_{|C}}{\rk\cF_{|C}}=d \frac{\deg(c_1(\cF)\cdot H)}{\rk\cF}>d\frac{\deg(c_1(\cE)\cdot H)}{\rk\cE}=\frac{\deg \cE_{|C}}{\rk\cE_{|C}}=\mu(\cE_{|C})$$
\item If $\cE$ is $\cH$-semistable with respect to some ample line bundle $\cH$, and $C$ is a general curve in $|\cH^{\otimes m}|$, for sufficiently large $m$, then $\cE_{|C}$ is $\mu$-semistable \cite{MR}.
\end{itemize}
Note that as a fibre $F$ of any fibration $f\colon S\longrightarrow B$ satisfies $F^2=0$, it cannot be ample.
However, if the fibration is rational (i.e. $B\cong \pr^1$), the conditions above can hold true after some blow down of sections of the fibration.
\end{remark}

Let us consider now a fibred surface $f\colon S\longrightarrow B$, a line bundle $\cL$ and a rank $r$ subsheaf $\cG\subseteq f_*\cL$.
The second point of Moriwaki's argument, using our terminology, relates $\delta(\cE)$ to $e(\cL,\cG)$, for a suitably chosen vector bundle $\cE$, as follows.

\smallskip

Let $\cM$ be the kernel of the evaluation morphism $f^*\cG\subset f^*f_*\cL\longrightarrow\cL $.
The following  is a generalization of a computation contained in \cite{Mor1}.

\begin{proposition}\label{e=delta}
With the above notation, if either
\begin{itemize}
\item[(a)] the sheaf $\cG$ is generating in codimension 2, or
\item[(b)] the line bundle $\cL$ is $f$-nef, and $\cG$ has base locus vertical with respect to $f$,
\end{itemize}
then $$\delta(\cM)\leq e(\cL, \cG).$$
\end{proposition}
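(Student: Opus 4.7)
The plan is to recover $\delta(\cM)$ from the defining exact sequence by an additive Chern character computation, and to compare it with $e(\cL,\cG)$ term by term. First I would analyze the image of the evaluation map $ev\colon f^*\cG\to\cL$. Denoting this image by $\cL'\subseteq\cL$, the sheaf $\cL'$ is a torsion-free rank one subsheaf of a line bundle on the smooth surface $S$, hence of the form $\cL'\cong \cL(-D)\otimes\cI_Z$, where $D\geq 0$ is the divisorial base locus of $\cG$ and $Z\subset S$ is a zero-dimensional subscheme of some length $\ell\geq 0$. Under hypothesis $(a)$, generation in codimension $2$ forces $D=0$; under hypothesis $(b)$, $D$ is a vertical effective divisor with respect to $f$. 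In both cases one has a short exact sequence
$$0\longrightarrow \cM\longrightarrow f^*\cG\longrightarrow \cL(-D)\otimes\cI_Z\longrightarrow 0.$$

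Next I would compute $c_1(\cM)$ and $c_2(\cM)$ by additivity of the Chern character applied to this sequence. Since $\cG$ lives on the curve $B$, we have $\mathrm{ch}(f^*\cG)=r+\deg(\cG)\,F$, with $F$ the class of a general fibre; combined with $F^2=0$ and $F\cdot D=0$ (valid in either case because $D$ is vertical), a short calculation gives
$$c_1(\cM)=\deg(\cG)\,F-L+D,\qquad c_2(\cM)=L^2-\deg(\cG)L_{|F}-2LD+D^2-\ell.$$
Substituting into $\Delta(\cM)=2(r-1)c_2(\cM)-(r-2)c_1(\cM)^2$ and grouping terms, the $L^2$ and $\deg(\cG)L_{|F}$ contributions recombine into $e(\cL,\cG)=rL^2-2\deg(\cG)L_{|F}$, yielding the key identity
$$\delta(\cM)=e(\cL,\cG)-r\bigl(2LD-D^2\bigr)-2(r-1)\ell.$$

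It then remains to check that the correction $r(2LD-D^2)+2(r-1)\ell$ is non-negative in each case. In case $(a)$ one has $D=0$, so only $2(r-1)\ell\geq 0$ is needed. In case $(b)$, the relative nefness of $\cL$ applied to the vertical effective divisor $D$ gives $LD\geq 0$, while Zariski's lemma for fibred surfaces gives $D^2\leq 0$, whence $2LD-D^2\geq 0$; combined with $\ell\geq 0$ this yields $\delta(\cM)\leq e(\cL,\cG)$. The only subtle point is to keep track of the zero-dimensional contribution $Z$ in the image of $ev$: without it the computation would give an equality, and the $-2(r-1)\ell$ term is exactly what converts the identity into an inequality. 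Beyond this, the argument is essentially bookkeeping in Chern classes plus Zariski's lemma, so I do not expect any real obstacle.
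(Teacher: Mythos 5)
Your proposal is correct and follows essentially the same route as the paper: both identify the image of the evaluation map as $\cL(-D)$ twisted by the ideal of a length-$\ell\geq 0$ scheme, compute $c_1(\cM)$ and $c_2(\cM)$ from the resulting exact sequence, arrive at the identity $\delta(\cM)=r(L-D)^2-2\deg\cG\,(L-D)F-2(r-1)\ell$, and conclude via $D=0$ in case (a) and via Zariski's lemma plus $f$-nefness in case (b). The only cosmetic difference is that you use Chern character additivity where the paper uses the Whitney sum formula directly.
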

\begin{proof}
Let us call $\cK$ the image of the evaluation morphism, so that we have the following exact sequence
 $$
 0\longrightarrow {\cal M}\longrightarrow f^*{\cal G}\stackrel{\varphi}{\longrightarrow}\cK \longrightarrow 0.
 $$
Note that, with the notations of Section \ref{firstdefinitions},
$c_1(\cK)=c_1(\cL(-D))$, where $D$ is the fixed locus of $\cG$, and $c:=c_2(\cK)=-E^2\geq 0$, where $E$ is as in \ref{firstdefinitions}.
Indeed, $c_2(\cK)$ is the length of the isolated base points of the variable part (with natural scheme structure) \cite{A-K}.
So we have
\begin{itemize}
\item $c_1(\cM)=f^*c_1(\cG)-c_1(\cL(-D))$;
\item $c_2(\cM)=-c_1(\cM)c_1(\cL(-D))-c$.
\end{itemize}
Hence $\deg c_2(\cM)= (L-D)^2-\deg \cG(L-D)F-c$. Let $r=\rk \cG$, so that $\rk \cM=r-1$.
We can make the following computation
$$
\begin{array}{ll}
\displaystyle  \delta(\cM)& =2(r-1) \left[(L-D)^2-\deg\cG (L-D)F -c\right] +\smallskip \\
\displaystyle& -(r-2)\left[(L-D)^2-2\deg \cG (L-D)F\right]=\smallskip \\
\displaystyle& = r(L-D)^2-2\deg \cG(L-D)F-2(r-1) c.\smallskip\\
\end{array}
$$
In case (a) $D=0$ and we thus obtain $\delta(\cM)=e(\cL, \cG) -2(r-1)c\leq e(\cL, \cG)$.
In case assumption  (b) holds, observe that $(L-D)^2=L^2-2LD+D^2\leq L^2$; indeed being $D$ effective and vertical, we have  $D^2\leq 0$  by Zariski's Lemma, and  $LD\geq 0$ because $L$ is supposed to be $f$-nef.
On the other hand, $(L-D)F=LF$ again because $D$ is vertical.
Hence, we still obtain the desired inequality, and the proof is concluded.
\end{proof}

Combining Proposition \ref{corollary bogomolov} and Proposition \ref{e=delta} we get immediately the following result

\begin{theorem}\label{bogomolov stability}
With the notation above, suppose that the restriction $\cM_{|F}$ to a general  fibre $F$ is a semistable sheaf on it.
Suppose moreover that one of the following conditions holds.
\begin{itemize}
\item the sheaf $\cG$ is generating in codimension 2;
\item the line bundle $\cL$ is $f$-nef, and $\cG$ has base locus vertical with respect to $f$.
\end{itemize}
Then the couple $(\cL, {\cal G})$ is $f$-positive.
\end{theorem}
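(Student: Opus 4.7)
The statement literally says ``Combining Proposition \ref{corollary bogomolov} and Proposition \ref{e=delta} we get immediately the following result,'' so the proof will essentially be a one-line chaining of the two inequalities. My plan is to apply these two propositions to the kernel sheaf $\cM := \ker(f^*\cG \longrightarrow \cL)$ (the sheaf that appears in the statement), in two steps.

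First, I would invoke Proposition \ref{corollary bogomolov} with $\cE = \cM$. The hypothesis of that proposition requires precisely that $\cM_{|F}$ is $\mu$-semistable on a general fibre $F$ of $f$, which is exactly the standing assumption of Theorem \ref{bogomolov stability}. The conclusion is that $\delta(\cM) \geq 0$.

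Second, I would invoke Proposition \ref{e=delta}, whose conclusion is $\delta(\cM) \leq e(\cL, \cG)$. The hypotheses of that proposition offer two alternative sufficient conditions: either $\cG$ is generating in codimension $2$, or $\cL$ is $f$-nef and the base locus of $\cG$ is vertical with respect to $f$. These are precisely the two alternatives listed in the statement of the theorem, so in either case the proposition applies.

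Chaining the two inequalities gives
\[
0 \;\leq\; \delta(\cM) \;\leq\; e(\cL, \cG),
\]
which by Definition \ref{def-f-positivity} says exactly that the couple $(\cL, \cG)$ is $f$-positive. There is no real obstacle here: all the conceptual work has been carried out in Propositions \ref{corollary bogomolov} and \ref{e=delta} (whose proofs in turn rest on Bogomolov's Instability Theorem \ref{BIT} plus the Hodge Index Theorem, and on a direct Chern class computation respectively), and the present theorem is just the combination. The only minor point to mention is that the two alternative hypotheses of the theorem correspond bijectively to the two cases (a) and (b) of Proposition \ref{e=delta}, so no separate argument is needed for each alternative.
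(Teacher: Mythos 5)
Your proposal is correct and is exactly the paper's argument: the paper states the theorem as an immediate consequence of Proposition \ref{corollary bogomolov} (applied to $\cE=\cM$, giving $\delta(\cM)\geq 0$) and Proposition \ref{e=delta} (giving $\delta(\cM)\leq e(\cL,\cG)$), whose two alternative hypotheses match the two alternatives in the theorem. Nothing is missing.
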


\begin{example}\label{slopeM}
Let $f\colon S\longrightarrow B$ be a  relatively minimal non-locally trivial  fibred surface. Let us prove the slope inequality via Moriwaki's method.
Let us consider the couple $(\omega_f, f_*\omega_f)$, and let $\cM$ be the kernel sheaf of the evaluation morphism $f^*f_*\omega_f\longrightarrow \omega_f$.
The hypoteses of the above theorem are satisfied.
Indeed, the assumption that $\cM_{|F}$ is a semistable sheaf has been proved by Pranjape and Ramanan in \cite{PR}.
The evaluation morphism $f^*f_*\omega_f\longrightarrow \omega_f$ can fail to be surjective on some vertical divisor, so that Theorem \ref{bogomolov stability} can be applied with assumption (b) holding, and leads to the slope inequality (see also \cite{A-K}).

In case $f$ is a nodal fibration, we can obtain a finer inequality, as follows.
From the computations of Proposition \ref{e=delta}, using the results contained in Example \ref{moving-surfaces}, we have that
$$0\leq \delta(\cE)= gK_f^2-4(g-1)\chi_f - g(3k-4r)-2(g-1)l,$$
where, as in Example \ref{slopeCH}, $k$ is the number of disconnecting nodes lying on components of socket type of the fibres, $l$ is the number of the others disconnecting nodes, and $r$ is the number of components of socket type. So, we get
$$
K_f^2\geq 4\frac{g-1}{g}\chi_f + k +2\frac{g-1}{g}l.
$$
Note that this inequality is slightly better than the one obtained in Examples \ref{slopeCH} and \ref{slopeX} using respectively the Cornalba-Harris and the Xiao methods.
\end{example}

\begin{remark}
Moriwaki  in \cite{Mor1} uses, for nodal fibred surfaces $S\longrightarrow B$, as line bundle ${\cal L}$ on $S$ an ad hoc modification of the relative canonical bundle $\omega_f$ on the singular fibres, and  as ${\cal G}$ the whole $f_*\cL$.
The result is an inequality, involving some contributions due to the singular fibres, stronger than the one  obtained in Example \ref{slopeM}.
\end{remark}

From  Proposition \ref{e=delta}, combining it with Remark \ref{H-stabile}, we can straightforwardly deduce the following condition for $f$-positivity.

\begin{theorem}\label{f-positivity-high-stability}
With the notations above, if the kernel of the evaluation morphism
$$f^*\cG\longrightarrow \cL$$
 is $\cH$-semistable with respect to an ample line bundle $\cH$ on $S$, then $(\cL, \cG)$ is $f$-positive.
\end{theorem}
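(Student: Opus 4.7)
The plan is to recognize this theorem as an essentially immediate combination of two ingredients already established in the paper: the numerical inequality $\delta(\cM)\leq e(\cL,\cG)$ of Proposition \ref{e=delta}, and the global consequence of Bogomolov's Instability Theorem captured in Remark \ref{H-stabile}. The whole argument amounts to chaining these two bounds.

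First I would apply Proposition \ref{e=delta} to the evaluation sequence $0\to\cM\to f^*\cG\to\cK\to 0$, obtaining
$$\delta(\cM)\leq e(\cL,\cG),$$
under the standing hypotheses of the section (i.e., $\cG$ generating in codimension two, or $\cL$ being $f$-nef with vertical base locus of $\cG$); these are implicit in the phrase ``with the notations above''. Next, I would invoke the Bogomolov Instability Theorem \ref{BIT} in its contrapositive form. If $\delta(\cM)<0$, then $\cM$ would admit a saturated subsheaf whose associated class $D$ lies in the positive cone $K^{+}(S)$. But any class in $K^{+}(S)$ has strictly positive intersection with every ample divisor, and this strict positivity translates into the destabilization of $\cM$ with respect to \emph{every} ample polarization. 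In particular, the $\cH$-semistability assumption for even a single ample $\cH$ would be violated; so the hypothesis forces $\delta(\cM)\geq 0$.

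Concatenating the two inequalities gives $0\leq\delta(\cM)\leq e(\cL,\cG)$, which is precisely $f$-positivity of $(\cL,\cG)$. There is no real technical obstacle: the whole proof fits in a few lines once Proposition \ref{e=delta} and Remark \ref{H-stabile} are in hand. The only conceptual point worth stressing, and perhaps the reason the statement deserves its own label, is that the hypothesis here is genuinely independent of the one in Theorem \ref{bogomolov stability}. In that theorem one asks for $\mu$-semistability of $\cM_{|F}$ on the general fibre $F$; here one asks for $\cH$-semistability of $\cM$ globally on $S$. As recorded in Remark \ref{paragonestabil}, neither condition is implied by the other (since $F$ is never ample), so the two results provide two independent routes to $f$-positivity via Bogomolov's theorem.
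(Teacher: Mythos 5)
Your proposal is correct and is essentially identical to the paper's own (one-line) proof, which likewise just chains Proposition \ref{e=delta} with Remark \ref{H-stabile}: Bogomolov's destabilizing class lies in $K^+(S)$ and so pairs positively with every ample class, whence semistability for a single polarization already forces $\delta(\cM)\geq 0$, and then $0\leq\delta(\cM)\leq e(\cL,\cG)$. Your closing observation on the independence from Theorem \ref{bogomolov stability} also matches the paper's own remark following the statement.
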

This result is not implied by Moriwaki's Theorem \ref{bogomolov stability}, by what observed in Remark \ref{paragonestabil}.

\begin{remark}\label{bogomolov-high-dim}
It would be nice to be able to extend Moriwaki's method to higher dimensions.
Thanks to Mumford-Metha-Ramanathan's restriction theorem \cite{MR}, it is possible to obtain the following Bogomolov-type result. Let $X$ be a variety of dimension $n$, and $\cE$ a vector bundle on $X$.
Let  $\cH$ be an ample line bundle on $S$. Define
$$\delta (\cE):= \deg \left(2\,\rk {\cal E} c_2({\cal E})H^{n-2}-(\rk {\cal E}-1)c_1^2({\cal E})H^{n-2}\right).$$
Then, if $\cE$ is  $\cH$-semistable, then $\delta(\cE)\geq 0$.
Unfortunately, this beautiful result does not imply $f$-positivity in dimension greater than $2$.
One should consult also the paper \cite{Mor2}, of Moriwaki himself, for other inequalities along the same lines.
\end{remark}




\section{Linear stability: a thread binding the methods for $n=2$}\label{thread}

In this section we introduce the {\em linear stability}, for curves together with a linear series.
We see that in the case of fibred surfaces linear stability represents a link between the three methods described in the previous section, that are from all other aspects extremely different.


Let us start by recalling the notion of linear stability for a curve and a linear series on it \cite{LS}.
This is a straightforward generalization in the case of curves of the one given by Mumford in \cite{Mum}.

Let  $C$ be a smooth curve, and let $\varphi\colon C\longrightarrow \pr^{r-1}$ be a non-degenerate morphism.
This corresponds to a  globally generated line bundle $\cL$  on $C$, and a base-point free linear subsystem $V\subseteq H^0(C,\cL)$ of dimension $r$ such that $\varphi$ is induced from the linear series $|V|$. Let  $d$ be the degree of $\mathcal L$ (i.e. $|V|$ is a a $g^{r-1}_d$ on $C$).
Linear stability gives a lower bound on the slope between the degree and the dimension of any projections, depending on the degree and dimension of the given linear series as follows.

\begin{definition}\label{ls}
With the above notation, we say that the couple  $(C, V)$, is {\em linearly semistable (resp. stable)} if  any linear series of degree $d'$ and dimension $r'-1$ contained in  $|V|$  satisfies $$\frac{d'}{r'-1}\geq \frac{d}{r-1}\quad (\mbox{resp. }>)$$

In case $V=H^0(\cL)$, we shall talk of the stability of the couple $(C, \cL)$. It is easy to see  that it is sufficient to verify that the inequality of the definition holds for any {\em complete} linear series in $|V|$.
\end{definition}

\begin{example}\label{linear stability known}
Some of the known results are the following.
\begin{enumerate}
\item The canonical system on a curve of genus $\geq 2$ is linearly semistable and it is stable if and only if the curve is non-hyperelliptic.
This follows from Clifford's Theorem and Riemann-Roch Theorem (see \cite[chap.14, sec.3]{ACG2}).
\item It is immediate to check that a plane curve of degree $d$ is linearly semistable (with respect to its immersion in $\pr^2$) if and only if it has points of multiplicity at most $d/2$.
\item Using Riemann-Roch Theorem, it is easy to check that the morphism induced on a curve of genus $g$ by a line bundle of degree $\geq 2g+1$ is linearly stable (see \cite{Mum}).
\item For a non-hyperelliptic curve of genus $\geq 2$, generic projections of low codimension from the canonical embedding  are linearly stable \cite{BS1}.
\item Given a base-point free linear system $V\subseteq H^0(C,\cL)$ on a curve $C$ of genus $g$,   if $\deg \cL \geq 2g$, and  the codimension of $V$ in $H^0(C, \cL)$ is less or equal than $(\deg \cL- 2g)/2$, then $(C, V) $ is linearly semistable \cite{mistretta}.
\end{enumerate}
\end{example}


\subsection{Linear stability and the Cornalba-Harris method}\label{LS&CH}

Mumford introduced the concept  of linear stability in order to find a more treatable notion that the ones of GIT stability.
The importance of linear stability from this point of view lies indeed in the following result \cite[Theorem 4.12]{Mum}

\begin{theorem}[Mumford]\label{LSimplicaCHOW}
If $(C, \cL)$ is linearly (semi-)stable and $\cL$ is very ample, then $(C, \cL)$ is Chow (semi-)stable.
\end{theorem}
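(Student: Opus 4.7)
The plan is to verify the Hilbert--Mumford numerical criterion for Chow (semi)stability: for every non-trivial one-parameter subgroup $\lambda\colon \mathbb{G}_m \to SL(V)$ the numerical invariant $\mu(\mathrm{Chow}(C,V),\lambda)$ must be non-positive (respectively, strictly negative). A 1-PS is specified by an eigenbasis $e_1,\ldots,e_r$ of $V$ together with integer weights $w_1\leq w_2\leq\cdots\leq w_r$ summing to zero; the natural object attached to it is the complete flag $V_i := \langle e_1,\ldots,e_i\rangle\subseteq V$.

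For each $i$, the sub-linear series $|V_i|\subseteq |V|$ has a base divisor $B_i\subset C$, and we set $d_i := \deg\cL - \deg B_i\geq 0$, so that $d_1\leq d_2\leq\cdots\leq d_r = d := \deg\cL$. The linear (semi)stability hypothesis, applied to the complete sub-pencil $|V_i|$ for $2\leq i\leq r-1$, reads
\begin{equation*}
d_i \;\geq\; \tfrac{i-1}{r-1}\,d \qquad (\text{resp. }>).
\end{equation*}

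The heart of the argument is Mumford's evaluation of the Hilbert--Mumford weight of the Chow form $F_C \in H^0(G,\cO(d))$ on the relevant Grassmannian. By analysing the action of $\lambda$ on the Pl\"ucker embedding and controlling, via a Bezout-type estimate, which Pl\"ucker monomials survive in the initial term of $F_C$ under the degeneration driven by $\lambda$, one obtains an inequality of the shape
\begin{equation*}
\mu(\mathrm{Chow}(C,V),\lambda) \;\leq\; \sum_{i=1}^{r-1}(w_{i+1}-w_i)\bigl[(i-1)d - (r-1)d_i\bigr],
\end{equation*}
where the condition $\sum w_i = 0$ has been used to normalise. Each weight gap $w_{i+1}-w_i$ is non-negative and each bracket is non-positive by linear semistability, so the entire right-hand side is $\leq 0$; in the stable case the strict inequalities on the $d_i$'s, together with the fact that at least one gap $w_{i+1}-w_i$ is strictly positive for non-trivial $\lambda$, force $\mu<0$. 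This verifies the numerical criterion and yields Chow (semi)stability.

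I expect the main technical obstacle to be the weight-computation step itself, namely the passage from the combinatorial flag data $\{V_i\}$ and the geometric data $\{d_i\}$ to an actual upper bound for the weight of $F_C$. This requires either a careful Pl\"ucker-coordinate analysis of the Chow form or, equivalently, a Gr\"obner-type degeneration argument on the homogeneous ideal of $C\subset\mathbb{P}(V)$, both of which amount to identifying the initial form of $F_C$ with respect to $\lambda$ and bounding its multidegree. The very ampleness of $\cL$ enters precisely to ensure that the embedded geometry of $C$ reflects the abstract sub-linear-series data faithfully, so that $d_i$ genuinely controls the contribution of $V_i$ to the weight. Once the displayed bound is in hand, the collapse to linear stability is engineered by the matching of the denominators $i-1$ and $r-1$, and the remaining argument is formal.
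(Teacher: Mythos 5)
The paper does not actually prove this statement: it is quoted verbatim from Mumford (\cite{Mum}, Theorem 4.12), so the only fair comparison is with Mumford's original argument. Your strategy --- verify the Hilbert--Mumford criterion for the Chow point, attach to each one-parameter subgroup the flag $V_1\subset\cdots\subset V_r$, and bound the weight of the Chow form by the degrees $d_i$ of the moving parts of the sub-series $|V_i|$ --- is exactly Mumford's route. But as written the proposal has a genuine gap: the displayed inequality for $\mu(\mathrm{Chow}(C,V),\lambda)$ is the entire content of the theorem (it is Mumford's Theorem 2.9 combined with the curve-specific estimate of the Chow weight), and you do not establish it; you only describe it as ``an inequality of the shape'' to be obtained by a Pl\"ucker or Gr\"obner degeneration analysis that you explicitly defer. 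Once that bound is granted, the deduction from linear semistability is a two-line verification, so essentially all of the mathematical work remains undone.

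Moreover, the particular shape you wrote down is too weak to give the \emph{strict} case. The $i=1$ bracket is $(1-1)d-(r-1)d_1=-(r-1)d_1$, and $d_1=0$ always, since $V_1=\langle e_1\rangle$ spans a zero-dimensional linear system whose base divisor is all of $\mathrm{div}(e_1)$. Hence for the nontrivial one-parameter subgroup with $w_1<w_2=\cdots=w_r$ your bound yields only $\mu\leq 0$, not $\mu<0$, and linear \emph{stability} cannot be invoked for $i=1$ (the defining ratio $d'/(r'-1)$ is not even defined there). Mumford's actual estimate has a different, sharper form --- it pairs consecutive degrees, roughly $\sum_i (d_i+d_{i+1})(w_{i+1}-w_i)$, the same telescoping structure that reappears in Xiao's lemma in Section 3.3 of this paper --- and it is that pairing which rescues strictness. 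So either you must prove the sharper estimate, or you must treat separately the one-parameter subgroups whose only positive weight gap occurs at $i=1$.
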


In \cite{ACG2} it is proved the following result (\cite[Theorem (2.2)]{ACG2})
\begin{theorem}\label{LSimplicaHILBERT}
If $(C, \cL)$ is linearly stable and $\cL$ is very ample, then $(C, \cL)$ is Hilbert stable.
\end{theorem}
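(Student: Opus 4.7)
The plan is to verify the Hilbert--Mumford numerical criterion for the $h$-th Hilbert point of $(C,V)$ for all sufficiently large $h$. Since $\cL$ is very ample, Serre vanishing implies that the multiplication map $\varphi_h\colon\sym^h V\to H^0(C,\cL^{\otimes h})$ is surjective for $h\gg 0$, so in view of Remark \ref{hilbfinito} the generalized Hilbert point coincides with the classical one associated to the embedding $C\hookrightarrow\pr(V)$.

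First, given a nontrivial $1$-parameter subgroup $\lambda$ of $SL(V)$, I diagonalize it in a basis $e_0,\ldots,e_{r-1}$ of $V$ with integer weights $w_0\leq\cdots\leq w_{r-1}$ satisfying $\sum_i w_i=0$, thereby obtaining a flag $V_i:=\langle e_0,\ldots,e_i\rangle$ of sub-linear series on $C$. Letting $d_i$ denote the degree of $V_i$ (the degree of $\cL$ minus that of the base locus of $V_i$), linear stability gives
$$
\frac{d_i}{i} \geq \frac{d}{r-1}, \qquad 0 < i < r-1,
$$
with strict inequality under the stability hypothesis. Following Mumford's proof of Theorem \ref{LSimplicaCHOW}, I would reduce the numerical criterion to controlling the combinatorial sum $\sum_{j=1}^{N_h}\mathrm{wt}(e^{\alpha_j})$ computed on a monomial basis $\{e^{\alpha_j}\}$ of $\sym^h V$ whose images form a basis of $H^0(C,\cL^{\otimes h})$. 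The idea is to pick this basis adapted to the induced filtration $W_{i,h}:=\varphi_h(\sym^h V_i)$: for each $i$, select $\dim W_{i,h}-\dim W_{i-1,h}$ monomials of maximal index exactly $i$ whose images span the graded piece $W_{i,h}/W_{i-1,h}$. Each such monomial involves only $e_0,\ldots,e_i$, hence has weight at most $hw_i$.

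For $h\gg 0$ Riemann--Roch on the image curve $\varphi_{V_i}(C)\subseteq\pr^i$ yields $\rho_i:=\dim W_{i,h}=he_i-g_i+1$, where $e_i$ and $g_i$ are the degree and geometric genus of the image. Substituting these asymptotics into the Abel-summed bound
$$
\sum_{j=1}^{N_h}\mathrm{wt}(e^{\alpha_j})\leq hw_{r-1}\rho_{r-1}+h\sum_{i=0}^{r-2}(w_i-w_{i+1})\rho_i,
$$
and invoking $\sum w_i=0$, I expect the leading $h^2$-coefficient to collapse to a linear combination of the nonnegative differences $w_{i+1}-w_i$ whose coefficients are governed precisely by the quantity $(r-1)d_i-id$. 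Strict linear stability then forces this leading coefficient to have the sign required by Hilbert--Mumford for stability of the $h$-th Hilbert point, which by Definition \ref{genhilb} suffices.

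The main obstacle I anticipate is the bookkeeping needed to reconcile the algebraic degree $d_i$ appearing in linear stability with the geometric degree $e_i$ that enters Riemann--Roch, since $d_i=\deg(\varphi_{V_i})\cdot e_i$ can differ by the degree of $\varphi_{V_i}$ onto its image; this is handled, as in Mumford's original argument, by factoring through the Stein factorization of $\varphi_{V_i}$. A secondary subtlety is the correct calibration of the sign convention in the numerical criterion, which determines whether the ``adapted'' basis should be chosen to minimize or to maximize the sum of weights; in either case the argument is purely formal and follows the same template as Mumford's proof of Theorem \ref{LSimplicaCHOW} and the Chow-stability version of the ACGH argument.
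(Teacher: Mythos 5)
First, a point of reference: the paper does not prove this statement at all — it is quoted from \cite[Theorem (2.2)]{ACG2}, and the paper notes it also follows by combining Mumford's Theorem \ref{LSimplicaCHOW} with Morrison's result that Chow stability implies Hilbert stability. Your reduction to the classical Hilbert point via Remark \ref{hilbfinito} is fine, but your direct argument has a genuine gap at its central estimate. You bound the weight of an adapted monomial basis using the single filtration $W_{i,h}=\varphi_h(\sym^hV_i)$ together with the asymptotic $\rho_i=\dim W_{i,h}=he_i-g_i+1$, where $e_i$ is the degree of the \emph{image} of $\varphi_{V_i}$. But linear stability controls $d_i$, the degree of the moving part of $V_i$ computed \emph{on $C$}, and $e_i=d_i/\deg(\varphi_{V_i})$ can be vastly smaller; the Stein factorization does not reconcile the two, it is precisely where the argument breaks. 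The extreme case is $i=1$: the image of any pencil is $\pr^1$ itself, so $e_1=1$ and $\rho_1=h+1$ no matter how large $d_1$ is. Carrying out your Abel summation with $\delta_i=w_{i+1}-w_i\geq0$ and $\sum_jw_j=0$, the leading $h^2$-coefficient of your upper bound is
$$
\sum_{i=0}^{r-2}\delta_i\left(\frac{(i+1)d}{r}-e_i\right),
$$
so negativity for every one-parameter subgroup forces $e_i>\frac{(i+1)d}{r}$ for each $i$. This fails for pencils, and even in the best case $e_i=d_i$ it is strictly stronger than the bound $d_i>\frac{id}{r-1}$ that linear stability supplies, since $\frac{i}{r-1}<\frac{i+1}{r}$ for $0<i<r-1$. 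So the leading coefficient is \emph{not} governed by $(r-1)d_i-id$, as you assert without computation.

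The missing idea — which is the actual content of both Mumford's proof of Theorem \ref{LSimplicaCHOW} and of the argument in \cite{ACG2} — is to work with the two-parameter family of subspaces $\sym^{h-k}V_i\cdot\sym^{k}V\subseteq\sym^hV$ rather than with $\sym^hV_i$ alone. Because $\cL$ is very ample, the image of $\sym^{h-k}V_i\cdot\sym^{k}V$ in $H^0(C,\cL^{\otimes h})$ has dimension at least $(h-k)d_i+kd+1-g$ for $h\gg k\gg0$ (this is where the degree $d_i$ on $C$, rather than the degree of the image, correctly enters, via a general-position or base-point-free pencil trick argument), while its monomials have weight at most $(h-k)w_i+kw_{r-1}$. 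Summing over $k$ produces the extra factors of $\tfrac12$ that make the leading coefficient of the weight polynomial match the linear-stability quantity. Without this mixing with the very ample system $V$, the estimate you propose cannot close, so the step you dismiss as bookkeeping is in fact the proof.
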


By Morrison's result  \cite[Corollary 3.5]{Mor}, we see that Theorem \ref{LSimplicaCHOW} implies Theorem \ref{LSimplicaHILBERT}.
The arguments of \cite[Theorem (2.2)]{ACG2} cannot be pushed through to the semistable case, so at present it is not known if linear strict semistability implies Hilbert strict semistabilty (through the authors would be surprised if it doesn't).

It is easy to extend  the proof of both Mumford's Theorem \ref{LSimplicaCHOW}, and \cite[Theorem (2.2)]{ACG2}  to the case of a very ample non necessarily complete linear system (see e.g. the second author's Ph.D. Thesis).
\begin{remark}
Let $V$ be a base-point free linear system on $C$  inducing a morphism $\varphi\colon C\longrightarrow \pr^r $ of positive degree on the image. It is immediate to see that the linear (semi)stability of $V$ is equivalent to the linear stability of the image $\varphi(C)$ with its embedding in  $\pr^r$ (compare with Remarks \ref{hilbfinito} and \ref{Chow-image}).
\end{remark}
Using  the results of Section \ref{C&H} we can thus state the following results.
\begin{theorem}\label{CHconLS}
Let $f\colon  S\longrightarrow B$ be a fibred surface, $\cL$  a line bundle on $S$ and $\cG\subseteq f_*\cL$ a subsheaf. 
Suppose that  for general $t\in B$ the couple $(F,  {G}_t)$  is linearly semistable.
Suppose moreover that we are in one of the following situations:
\begin{itemize}
\item[(i)] the couple $(F,  {G}_t)$ is strictly linearly stable, and the sheaf $\cG$ is either generating, or it satisfies the conditions of Proposition \ref{conto1,5} or  \ref{conto2};
\item[(ii)] $\,$ for $t\in B$ general, the fibre $G_t \subseteq H^0(F, \cL_{|F})$ is base-point free and  the line bundle $\cL$ is relatively nef.
\end{itemize}
Then the couple $(\cL, \cG)$ is $f$-positive (via the Cornalba-Harris method).
\end{theorem}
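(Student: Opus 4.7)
The plan is to reduce each of the two cases to one of the two theorems of Section~\ref{C&H}, using linear (semi)stability on the general fibre as the bridge. By the remark immediately preceding the statement, the linear (semi)stability of $(F, G_t)$ is the same as that of the image $\varphi_t(F) \subset \pr^{r-1}$ endowed with its very ample $\cO(1)$, so we may from the outset transfer the hypothesis to the image; on the latter, the extension of Mumford's Theorem~\ref{LSimplicaCHOW} to not-necessarily-complete very ample linear systems (mentioned in the paragraph preceding the statement) applies.

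In case (i), strict linear stability of $(F, G_t)$ transfers to strict linear stability of the image, and by the extended Theorem~\ref{LSimplicaCHOW} this gives strict Chow stability of the image. By Morrison's implication recorded in Remark~\ref{Chow-Hilbert}, strict Chow stability implies strict Hilbert stability of the image. Invoking Remark~\ref{hilbfinito}, for $h \gg 0$ the generalized $h$-th Hilbert point of $(F, G_t)$ is identified with the classical $h$-th Hilbert point of the image, so $(F, G_t)$ itself is Hilbert stable, and in particular Hilbert semistable. The additional assumption on $\cG$ (either generating, or satisfying the hypotheses of Proposition~\ref{conto1,5} or \ref{conto2}) then allows us to apply the Cornalba-Harris Theorem~\ref{corCH} and conclude that $(\cL, \cG)$ is $f$-positive.

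In case (ii), $G_t$ is base-point free, so $\varphi_t \colon F \to \pr^{r-1}$ is a genuine morphism. Linear semistability of $(F, G_t)$ passes to that of the image, and the extended Theorem~\ref{LSimplicaCHOW} yields Chow semistability of the image, which by Remark~\ref{Chow-image} is equivalent to Chow semistability of the cycle ${\varphi_t}_*(F)$. Combined with the base-point-freeness of $G_t$ and the relative nefness of $\cL$ (both hypotheses of (ii)), these verify the three assumptions of Bost's Theorem~\ref{teobost}, and $f$-positivity of $(\cL, \cG)$ follows at once. The only step which is not a direct citation is the extension of Mumford's theorem to not-necessarily-complete very ample linear systems; this is where the main (mild) technical effort lies, and the original GIT argument adapts essentially unchanged.
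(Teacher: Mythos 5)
Your proposal is correct and follows exactly the route the paper intends (the paper states Theorem \ref{CHconLS} without a displayed proof, as an immediate consequence of the preceding discussion): in case (i) you pass to the image, use the extended Mumford Theorem \ref{LSimplicaCHOW} plus Morrison's implication from Remark \ref{Chow-Hilbert} to get Hilbert (semi)stability and invoke Theorem \ref{corCH}, and in case (ii) you use Chow semistability of the image cycle to invoke Bost's Theorem \ref{teobost}. The care you take about why strictness is needed in (i) --- only strict Chow stability implies Hilbert stability, the semistable arrow going the other way --- is precisely the point of the paper's dichotomy between the two cases.
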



\subsection{Linear stability and Xiao's method}\label{LS&X}

We verify here that the method of  Xiao gives as a result the $f$-positivity under the assumption of linear stability.

\begin{theorem}\label{xiaoconLS}
Let $f\colon S\longrightarrow B$ a fibred surface, $F$ a general fibre, $\cL$ a nef line bundle on $S$ and $\cG\subseteq f_*\cL$ a nef rank $r$ subsheaf.
Assume that the linear system on $F$ induced by $\cG$ is linearly semistable.
Then $(\cL,\cG)$ is $f$-positive (via Xiao's method).
\end{theorem}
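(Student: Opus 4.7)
The plan is to combine Xiao's inequality (Proposition \ref{xiaoinequality}) with the linear semistability hypothesis, applied to the Harder--Narasimhan (HN) filtration of $\cG$.

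Let me set up notation first. Let $0=\cG_0\subset\cG_1\subset\cdots\subset\cG_l=\cG$ be the HN filtration of $\cG$, with ranks $r_i=\rk\cG_i$ and slopes $\mu_i$ (nefness of $\cG$ implies $\mu_l\ge 0$). Let $d_i=\deg P_i$ denote the degree of the moving part of $(\cL,\cG_i)$ restricted to a general smooth fibre $\widetilde F$ of $\widetilde f$, with the notation of \ref{XIAO}. After reducing $(\cL,\cG)$ to its moving part via Proposition \ref{conto1,5} and Remark \ref{nef-rel-nef}, I may assume that the linear system $G_t$ defined by $\cG$ on a general fibre $F$ is base-point free, so that $d_l=L\cdot F$. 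The hypothesis of linear semistability of $G_t$ then yields
\[
\frac{d_j}{r_j-1}\ge\frac{d_l}{r-1}\qquad\text{for every }j\text{ with }r_j\ge 2,
\]
and $d_j=0$ whenever $r_j=1$. The target $f$-positivity is equivalent to $L^2\ge(2 d_l/r)\deg\cG$.

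Next I would apply Proposition \ref{xiaoinequality} in dimension two, which for any index set $I\subseteq\{1,\dots,l\}$ containing $l$ produces a lower bound for $L^2$ of the form
\[
L^2\ge\sum_{j\in I}(d_j+d_{j^+})(\mu_j-\mu_{j^+}),
\]
where $j^+$ denotes the successor of $j$ in $I\cup\{l+1\}$, with the conventions $\mu_{l+1}=0$ and $d_{l+1}=d_l$. Two particular instances are most useful: the fine choice $I=\{1,\dots,l\}$, giving $\sum_{j=1}^{l}(d_j+d_{j+1})(\mu_j-\mu_{j+1})$, and the coarse choice $I=\{1,l\}$, giving $d_1(\mu_1-\mu_l)+d_l(\mu_1+\mu_l)$. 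Substituting the linear-semistability lower bounds $d_j\ge d_l(r_j-1)/(r-1)$ into either expression reduces the target inequality to a purely combinatorial statement on the sequences $(r_j)$ and $(\mu_j)$.

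The main obstacle is that no single choice of $I$ handles every HN configuration: the fine choice yields a weak estimate when the early rank jumps $r_{j+1}-r_j$ are small while the early slope jumps $\mu_j-\mu_{j+1}$ are large (peaked HN profile), whereas the coarse choice is weak when $\mu_1$ is comparable to intermediate $\mu_j$ but $r_1$ is small (flat HN profile). I would finish by a case distinction: quantify the dichotomy via the ratio $(\mu_1-\mu_2)/(\mu_1-\mu_l)$ against $(r_2-r_1)/(r-r_1)$, and in each regime verify by Abel summation, exploiting the boundary condition $r_l=r$, that the corresponding Xiao bound dominates $(2 d_l/r)\deg\cG$. Combining the two regimes yields $L^2\ge 2\mu(\cG)(L\cdot F)$, i.e.\ $f$-positivity of $(\cL,\cG)$.
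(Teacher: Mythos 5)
Your argument coincides with the paper's own proof up to the last step: the same two instances of Xiao's inequality (the full index set $I=\{1,\dots,l\}$ and the coarse one $I'=\{1,l\}$), the same substitution $d_j\ge a(r_j-1)$ with $a=d_l/(r-1)$ furnished by linear semistability, and the same identity $\deg\cG=\sum_{i=1}^{l}r_i(\mu_i-\mu_{i+1})$. Carrying the substitution through the fine bound (using $r_{i+1}\ge r_i+1$ and the conventions $d_{l+1}=d_l$, $\mu_{l+1}=0$) gives the explicit estimate $L^2\ge 2a\deg\cG-a(\mu_1+\mu_l)$, while the coarse bound gives $L^2\ge d_l(\mu_1+\mu_l)$. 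At that point no case distinction is needed: the weighted combination $d_l\cdot(\text{fine})+a\cdot(\text{coarse})$ eliminates the unknown quantity $\mu_1+\mu_l$ and yields $(a+d_l)L^2\ge 2ad_l\deg\cG$, i.e. $L^2\ge \frac{2ad_l}{a+d_l}\deg\cG=\frac{2d_l}{r}\deg\cG$, which is exactly $f$-positivity. This is how the paper concludes.

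The genuine gap is that you leave this closing step as a sketch whose announced mechanism is wrong. A case distinction does exist, but it is governed by comparing $\mu_1+\mu_l$ with $2\deg\cG/r$ (equivalently, by which of the two displayed lower bounds is larger): the fine bound alone reaches the target exactly when $\mu_1+\mu_l\le 2\deg\cG/r$, and the coarse one exactly when the inequality is reversed. Your proposed test, comparing $(\mu_1-\mu_2)/(\mu_1-\mu_l)$ with $(r_2-r_1)/(r-r_1)$, does not detect these regimes. For instance with $l=3$, $(r_1,r_2,r_3)=(4,5,6)$ and $(\mu_1,\mu_2,\mu_3)=(10,\,0.1,\,0)$ one has $\deg\cG=40.1$ and the target is $\frac{2d_l}{r}\deg\cG\approx 66.8\,a$; the coarse bound gives only $d_l(\mu_1+\mu_3)=50\,a$ while the fine bound gives $70.2\,a$, yet your ratio test ($0.99$ versus $0.5$) classifies this as a ``peaked'' profile and sends you to the coarse bound, which falls short of the target. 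So the case analysis as specified would fail on such configurations; replacing it by the convex combination above (or by the dichotomy on $\mu_1+\mu_l$) repairs the proof.
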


\begin{proof}

Following the description of Xiao's method given in Section \ref{XIAO}, consider the linear systems $P_i$ induced on $F$ by the pieces of the Harder-Narashiman filtration of $\cG$, of rank $r_i$.
Let $d_i={\rm deg}P_i$, and observe that $d_l={\rm deg}L_{|F}=:d$.
Linear stability condition implies
$$
\frac{d_i}{r_i-1}\geq \frac{d_l}{r_l-1}=\frac{d_l}{r-1}=:a\qquad \mbox{for any }i=1,\ldots, l.
$$
Observe that if $r_1=1$ then $d_1=0$ and the above inequality should be read as $d_1 \geq a r_1$ and still holds.
Consider now the sets of indexes $I=\{1,...,l\}$ and $I'=\{1,l\}$.
Then we have
$$
L^2 \geq \sum_{i=1}^{l}(d_i+d_{i+1})(\mu _i - \mu_{i+1})
$$
and
$$
L^2 \geq (d_1+d_l)(\mu _1 - \mu _l)+2d_l \mu_l\geq d_l(\mu_1+\mu_l).
$$
Use now that $d_i\geq a(r_i-1)$ for $i=1,...,l$ ($d_{l+1}=d_l$) and that $r_{i+1}\geq r_i+1$. Observe that ${\deg}\cG=\sum_{i=1}^{l}r_i(\mu_ i-\mu _{i+1})$ to get
$$
L^2 \geq 2a{\rm deg}\cG-a(\mu_1+\mu_l),
$$
which finally  proves
$$
L^2 \geq \frac{2ad_l}{a+d_l}{\rm deg}\cG=2\frac{d}{r}{\rm deg}\cG.
$$
\end{proof}

\begin{remark}
The fact that we used  Clifford's theorem in the proof of the slope inequality via Xiao's method in Example \ref{slopeX} can thus be rephrased in the following way: Clifford's theorem implies the linear semistability of the general fibres of $f$  together with their canonical systems.
\end{remark}
We can make the following improvement for the complete case.
\begin{proposition}
With the notations above, assume that $\cL$ is nef and that $\cL_{|F}$ is linearly semistable. Then $(\cL,f_*\cL)$ if $f$-positive, i.e.
$$L^2 \geq 2\frac{d}{r}{\rm deg}f_*\cL.$$
\end{proposition}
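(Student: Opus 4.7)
The plan is to reduce the statement to Theorem \ref{xiaoconLS} by a twisting argument. The difficulty is that Theorem \ref{xiaoconLS} requires both $\cL$ and $\cG = f_*\cL$ to be nef, whereas here only $\cL$ is assumed nef. However, by Remark \ref{class-invariant} the Cornalba-Harris invariant is insensitive to twisting by pullbacks of line bundles from the base, and linear semistability of $\cL_{|F}$ is likewise preserved (since $f^*\cA$ restricts trivially to any fibre). So one may replace the pair by a twisted pair in which nefness of the pushforward is automatic.

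Concretely, I would choose a line bundle $\cA$ on $B$ of sufficiently large degree so that $f_*\cL\otimes \cA$ is nef. This is possible because a vector bundle on a smooth curve is nef if and only if its minimum Harder-Narasimhan slope is non-negative, and twisting by $\cA$ raises every slope by $\deg \cA$. Simultaneously, $\cL\otimes f^*\cA$ remains nef, as the tensor product of the nef line bundle $\cL$ with the pullback of the ample line bundle $\cA$. By the projection formula, $f_*(\cL\otimes f^*\cA)\cong f_*\cL\otimes \cA$, so the twisted pair is again of the form $(\cL', f_*\cL')$ with $\cL' := \cL\otimes f^*\cA$.

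Next, I would verify the remaining hypothesis of Theorem \ref{xiaoconLS}. The restriction $\cL'_{|F} = (\cL\otimes f^*\cA)_{|F}\cong \cL_{|F}$ is unchanged, hence is linearly semistable by assumption. Therefore Theorem \ref{xiaoconLS} applies to $(\cL',f_*\cL')$, yielding
\[
e(\cL', f_*\cL')\geq 0.
\]
By the invariance of the Cornalba-Harris invariant under twisting (Remark \ref{class-invariant}), $e(\cL, f_*\cL) = e(\cL',f_*\cL')\geq 0$, which unpacks exactly as $L^2\geq 2(d/r)\deg f_*\cL$.

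There is no serious obstacle here: the argument is a routine reduction. The only points that need to be checked with some care are (i) the projection formula identification $f_*(\cL\otimes f^*\cA)\cong f_*\cL\otimes \cA$ ensures that after twisting we are still dealing with the full pushforward (so Theorem \ref{xiaoconLS} is available with the complete linear system), and (ii) that $\deg \cA$ can be chosen large enough to simultaneously guarantee nefness of both $\cL'$ and $f_*\cL'$, which is immediate given the boundedness of the minimum HN slope of $f_*\cL$.
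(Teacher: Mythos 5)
Your proof is correct, but it takes a genuinely different route from the paper. The paper stays inside $f_*\cL$: it replaces $f_*\cL$ by the largest piece $\cG$ of its Harder--Narasimhan filtration whose graded slopes are all non-negative, observes that this $\cG$ is nef and satisfies $\deg\cG\geq\deg f_*\cL$, and then reruns the argument of Theorem \ref{xiaoconLS} for the (possibly incomplete) subsystem $\cG$, using the linear semistability of the complete system $|\cL_{|F}|$ to bound the subsystems cut out by the pieces of the filtration of $\cG$. Your twisting argument instead makes the \emph{whole} pushforward nef by tensoring with a sufficiently positive $\cA$ on $B$, checks via the projection formula that the twisted pair is still of the form $(\cL',f_*\cL')$, applies Theorem \ref{xiaoconLS} verbatim, and transports the conclusion back through the twist-invariance of $e(\cL,\cG)$ (Remark \ref{class-invariant}) -- exactly the mechanism already noted in Remark \ref{nef-rel-nef} for Proposition \ref{conto1,5}. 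Each step you flag (nefness of $\cL\otimes f^*\cA$, nefness of $f_*\cL\otimes\cA$ for $\deg\cA\gg 0$ via the minimal HN slope, invariance of linear semistability on fibres) is sound. What your route buys is that the fibrewise linear system stays complete and base-point free, so the constant $2d/r$ falls out of Theorem \ref{xiaoconLS} with no further bookkeeping; the paper's route avoids the invariance remark and the auxiliary twist, but has to track the degree of the moving part of the sub-linear system induced by $\cG$ on $F$. Both are legitimate; yours is arguably the cleaner reduction.
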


\begin{proof}Take $\cG$ to be the biggest piece of the Harder-Narashiman filtration of $f_*\cL$ such that $\mu_i\geq 0$. It is nef and we have that $\frac{d_i}{r_i-1}\geq \frac{d}{r-1}$ by linear semistability and that ${\rm deg}\cG \geq {\rm deg}f_*\cL$. Then apply the same method as in the proof of Theorem \ref{xiaoconLS}.
\end{proof}

\begin{remark}
From the proof of the Theorem \ref{xiaoconLS} we get an inequality even if we do not assume linear semistability condition on fibres. Indeed, observe that, if the linear subsystems of $P$, the one induced by $\cG$, verify
$$
\frac{d_i}{r_i-1} \geq a\qquad \mbox{for any }i=1,\ldots, l
$$
for some constant $a$, then we obtain the following inequality for the slope
$$L^2 \geq \frac{2ad}{a+d}{\rm deg}\cG.$$

\noindent  where $d={\rm deg}P$.

Take $\cG$ as in the proof of the previous proposition. Observe that $\frac{2ad_1}{a+d_1}\geq\frac{2ad_2}{a+d_2}$ if $d_1 \geq d_2$. Hence we can conclude that if $\cL$ is nef and induces a base point free linear system on $F$ of degree $d$, such that all its linear subsystems verify

$$\frac{d_i}{r_i-1} \geq a,$$

\noindent then

$$L^2 \geq \frac{2ad}{a+d}{\rm deg}f_*\cL.$$
\end{remark}

This remark allows us to give a general result for a  nef line bundle $\cL$ depending of its {\it degree of subcanonicity} (compare with \cite{Severi}).

\begin{proposition} Let $f: S \longrightarrow B$ be a fibred surface with general fibre $F$ of genus $g\geq 2$ and let $\cL$ be a nef line bundle on $S$. Let $d$ be the degree of the moving part of $\cL_{|F}$. Then

\begin{itemize}
\item [(i)] If $\cL_{|F}$ is subcanonical then $$L^2\geq \frac{4d}{d+2} {\rm deg}f_*\cL.$$
\item [(ii)] $\,$If $d\geq 2g+1$ then $$L^2\geq \frac{2d}{d-g+2} {\rm deg}f_*\cL.$$
\end{itemize}
\end{proposition}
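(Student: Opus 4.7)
The plan is to apply the inequality from the preceding remark, namely
\[
L^2 \geq \frac{2ad}{a+d}\deg f_*\cL,
\]
valid once $\cL$ is nef, the moving part of $\cL_{|F}$ (viewed after the desingularization of Section \ref{firstdefinitions}) is a base-point-free linear system of degree $d$ on the general fibre, and every sub-linear system with parameters $(d_i,r_i)$ satisfies $d_i/(r_i-1)\geq a$. In both parts the work reduces to identifying an admissible constant $a$, after which the stated inequality follows by direct substitution.

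For part (i), subcanonicity of $\cL_{|F}$ ensures that $K_F-\cL_{|F}$ is effective, so for any sub-linear system the underlying line bundle $L_i$ (of degree $d_i$) satisfies $K_F-L_i\geq K_F-\cL_{|F}\geq 0$ and is therefore special. Clifford's theorem gives $h^0(L_i)\leq d_i/2+1$, hence $r_i-1\leq d_i/2$ and $d_i/(r_i-1)\geq 2$. Taking $a=2$ produces $\frac{2ad}{a+d}=\frac{4d}{d+2}$, which is the bound claimed in (i).

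For part (ii), I claim $a=d/(d-g+1)$ is admissible. If $d_i\geq 2g-2$, Riemann--Roch combined with Clifford's theorem gives $h^0(L_i)\leq\max(d_i-g+1,\,d_i/2+1)\leq d_i-g+2$, so $r_i-1\leq d_i-g+1$ and
\[
\frac{d_i}{r_i-1}\geq\frac{d_i}{d_i-g+1}\geq\frac{d}{d-g+1},
\]
the last inequality since $d_i\leq d$ and $x\mapsto x/(x-g+1)$ is decreasing on $(g-1,\infty)$. If instead $d_i<2g-2$, then $L_i$ is necessarily special and Clifford alone gives $d_i/(r_i-1)\geq 2$; the estimate $2\geq d/(d-g+1)$ follows from $d\geq 2g+1$. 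A direct computation then yields $\frac{2ad}{a+d}=\frac{2d}{d-g+2}$.

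The principal technical point is to ensure that the remark's hypothesis of a base-point-free linear system on $F$ is in force. In (ii) this is automatic: the inequality $d\geq 2g+1$ forces $h^0(\cL_{|F})=h^0(\text{moving part})=d-g+1$, and because $\deg\cL_{|F}\geq d$ combined with $h^0=d-g+1$ is only possible when $\cL_{|F}$ is non-special of degree exactly $d$, the line bundle $\cL_{|F}$ is itself base-point-free. In (i) one replaces $\cL$ by its moving part $\cM=\widetilde\varphi^*\cO_\pr(1)$ on the desingularization $\widetilde X$; since $\cL$ is nef and the fixed part is effective, the inequality $L^2\geq M^2$ holds by the same intersection-theoretic argument as in the proof of Proposition \ref{conto1,5}, so bounds proved for $M^2$ transfer to $L^2$.
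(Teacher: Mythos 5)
Your argument is correct and follows essentially the same route as the paper: both proofs reduce to the inequality $L^2\geq \frac{2ad}{a+d}\deg f_*\cL$ from the preceding remark, taking $a=2$ via Clifford's theorem for (i) and $a=d/(d-g+1)$ via Riemann--Roch (equivalently, linear semistability of line bundles of degree $\geq 2g+1$) for (ii). One intermediate assertion is false as stated: a line bundle $L_i$ of degree $d_i<2g-2$ need \emph{not} be special (a general line bundle of degree between $g$ and $2g-3$ is non-special). This does not damage the proof, since in the non-special case Riemann--Roch gives $r_i-1\leq h^0(L_i)-1=d_i-g\leq d_i-g+1$ directly, so the bound $d_i/(r_i-1)\geq d/(d-g+1)$ holds in every case; the cleanest fix is to use the uniform estimate $h^0(L_i)\leq\max(d_i-g+1,\,d_i/2+1)$ for all $d_i$ rather than splitting at $d_i=2g-2$.
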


\begin{proof}
\begin{itemize}
\item [(i)] Just take $a=2$ in the previous remark using Clifford's theorem.
\item [(ii)] $\,$ If $d \geq 2g+1$ then the linear system $\cL_{|L}$ is linearly semistable and hence we can take, by Riemann-Roch theorem on $F$,
$$a=\frac{d}{r-1}=\frac{d}{d+1-g}.$$
\end{itemize}
\end{proof}



\subsection{Linear stability and Moriwaki's method}\label{LS&M}

Let $C$ be a curve, $\cL$ a line bundle on $C$, and $V\subseteq
H^0(C, \cL)$ a linear subsystem of degree $d$ and dimension $r$.
We now compare the concept of linear stability for a couple $(C, V)$ with the stability needed for the application of Moriwaki's method.
We call
$$M_{\cL, V} := \ker( V\otimes \cO_C \longrightarrow \cL),$$
the {\em dual span bundle} (DSB) of the line bundle $\cL$ with respect to the generating subspace $V \subseteq H^0(C, \cL)$.
This is a vector bundle of rank $r-1$ and degree $-d$.
When $V = H^0(C, {\cal L})$ we denote it $M_{\cal L}$. \footnote{Note that we make here, as in \cite{MS}, an abuse of notation: properly speaking the dual span bundle is  the dual bundle of $M_{V,\cal L}$,
which is indeed spanned by $V^*$.}

\begin{remark}
An interesting  geometric interpretation of this sheaf is the following.
Consider the Euler sequence on $\pr^n$:
$$ 0 \longrightarrow  {\Omega}^{1}_{\pr^n} (1) \longrightarrow \cO_{\pr^n} ^{\oplus (n+1)}
\longrightarrow \cO_{\pr^n}(1) \longrightarrow 0.$$
Applying the pullback of $\varphi$ we obtain
$$0\longrightarrow \varphi^*({\Omega}^{1}_{\pr^n} (1) )\longrightarrow V\otimes \cO_C\longrightarrow \cL\longrightarrow 0.$$
Hence the kernel of the evaluation morphism coincides with
the restriction of the tangent bundle of the projective space $\pr^n$ to the curve $C$.
\end{remark}


The $\mu$-stability of the DSB is the stability condition assumed to hold on the general fibres for the method of Moriwaki.

\begin{proposition}\label{implicazione facile}
With the above notation, if the DSB sheaf  $M_{\cL, V}$ is $\mu$-(semi)stable, then the couple $(C,V)$ is linearly (semi)stable.
\end{proposition}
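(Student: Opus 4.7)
The plan is to show that every proper linear subseries of $|V|$ gives rise to a proper subsheaf of $M_{\cL,V}$ with the ``correct'' slope, so that the $\mu$-(semi)stability inequality for $M_{\cL,V}$ translates directly into the linear (semi)stability inequality for $(C,V)$.

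More concretely, take a linear subseries of $(C,V)$ of degree $d'$ and dimension $r'-1$, which corresponds to a subspace $W\subseteq V$ of dimension $r'$ whose moving part is a line subbundle $\cL'\subseteq \cL$ of degree $d'$ (obtained from $\cL$ by subtracting the base divisor of $W$ inside $|V|$). The inclusions $W\otimes \cO_C\hookrightarrow V\otimes \cO_C$ and $\cL'\hookrightarrow \cL$ fit into the commutative diagram
$$
\xymatrix{
0 \ar[r] & M_{\cL',W} \ar@{^{(}->}[d] \ar[r] & W\otimes\cO_C \ar@{^{(}->}[d] \ar[r] & \cL' \ar@{^{(}->}[d] \ar[r] & 0 \\
0 \ar[r] & M_{\cL,V} \ar[r] & V\otimes\cO_C \ar[r] & \cL \ar[r] & 0
}
$$
and the snake lemma yields a canonical injection $M_{\cL',W}\hookrightarrow M_{\cL,V}$. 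By construction $M_{\cL',W}$ has rank $r'-1$ and degree $-d'$, and as soon as $r'<r$ it is a subsheaf of strictly smaller rank, hence proper.

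Applying $\mu$-(semi)stability of $M_{\cL,V}$ to this subsheaf gives
$$
\frac{-d'}{r'-1}=\mu(M_{\cL',W})\ \leq\ \mu(M_{\cL,V})=\frac{-d}{r-1}
\qquad\text{(resp.\ $<$)},
$$
which upon flipping signs is exactly the defining inequality of linear (semi)stability of $(C,V)$. I expect no serious obstacle: the only point requiring minor care is checking that the construction $W\mapsto M_{\cL',W}$ genuinely lands in $M_{\cL,V}$ as a saturated/proper subsheaf, so that strict $\mu$-stability really forces strict linear stability (and not just the non-strict inequality); this is handled by the rank count $r'-1<r-1$ for any proper subseries, together with the observation that the complete case $W=V,\ \cL'=\cL$ trivially gives equality and need not be tested.
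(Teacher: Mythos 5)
Your proof is correct and follows essentially the same route as the paper: the authors also take the kernel of the evaluation map $V'\otimes\cO_C\to\cL$ associated to a subseries, observe it is a rank-$(r'-1)$, degree-$(-d')$ subbundle of $M_{\cL,V}$, and read off the linear stability inequality from $\mu$-(semi)stability. Your explicit identification of the image with the moving part $\cL'$ and the snake-lemma diagram just spell out what the paper leaves implicit.
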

\begin{proof}
Let  us consider any  $g_{d'}^{r'-1}$ in  $|V|$. Let $V'$ be the associated subspace of $V$.
Consider the evaluation morphism $V'\otimes \cO_C\longrightarrow \cL$, which is not surjective unless $d'=d$, and let $\cG$ be its kernel. Then $\cG$ is a vector subbundle of $M_{\cL, V}$ with $\deg \cG=-d'$, $\rk \cG=r'-1$. So from the stability condition on  $M_{\cL, V}$ we obtain that $d'/(r'-1)\geq d/(r-1)$.
\end{proof}
\begin{remark}\label{la cosa bella}
Note that any $\cG\subseteq M_{\cL, V}$ as in the above theorem fits into the diagram
$$
\xymatrix{
0\ar[r]&\cG\ar[r]\ar@{^{(}->}[d]&V'\otimes \cO_C\ar[r] \ar@{^{(}->}[d] &\overline{\cL}\ar[r]\ar@{^{(}->}[d]& 0\\
0\ar[r]&M_{\cL, V}\ar[r]&V\otimes \cO_C\ar[r]  &\cL\ar[r]& 0}
$$
where $\overline \cL\subseteq \cL$ is the image of the evaluation morphism $V'\otimes \cO_C\longrightarrow \cL$.
\end{remark}

The converse implication is studied in \cite{MS}.
Let us now briefly discuss the question.
Consider a linearly semistable couple $(C, V)$.
Let us consider a proper saturated subsheaf $\cG\subseteq M_{\cL, V}$.
We have that $\cG^*$ is generated by its global sections. Consider the image of the natural morphism $W^*:=\mbox{im}(V^*\rightarrow H^0( C, \cG^*))$.
The evaluation morphism $W^*\otimes \cO_C\longrightarrow \cG^*$ is surjective.
We thus have the following commutative diagram (cf. \cite{Butler})
\begin{equation}\label{booo}
\xymatrix{
0\ar[r]&\cG\ar[r]\ar@{^{(}->}[d]&W\otimes \cO_C\ar[r] \ar@{^{(}->}[d] &\cF\ar[r]\ar[d]_\alpha& 0\\
0\ar[r]&M_{\cL, V}\ar[r]&V\otimes \cO_C\ar[r]  &\cL\ar[r]& 0}
\end{equation}
where $\cF$ is a vector bundle without trivial summands (this follows from the choice of $W$). Note that the  morphism $\alpha$ is non-zero, because $W\otimes \cO_C$ is not contained in the image of $M_{\cL, V}$ in $V\otimes \cO_C$.
Let us suppose that $\cG$ is destabilizing: if the sheaf $\cF$ is a line bundle, then we would be in the situation described in Remark \ref{la cosa bella}, and we could easily deduce from the linear (semi)stability of $(C, V)$ the $\mu$-(semi)stability of $M_{\cL, V}$.
But we cannot exclude that a destabilizing subsheaf exists which is not the transform of a line bundle contained in $\cL$.

In \cite{MS} the second author and E. C. Mistretta prove that indeed this is the case in the  following cases, which depend on the Clifford index $\textrm{Cliff} (C)$ of the curve $C$.

\begin{theorem}[\cite{MS} Theorem 1.1]
Let ${\cal L} $ be a globally generated line bundle on $C$,
and $V \subseteq H^0(C, \cal L)$ a generating space of global sections
such that
$$
\deg {{\cal L}} - 2 (\dim V  -1) \leqslant \textrm{Cliff} (C).
$$
Then linear (semi)stability of $(C, V)$
is equivalent to $\mu$-(semi)stability
of $M_{V,{\cal L}}$ in the following cases:

\begin{enumerate}
\item $ V = H^0({\cal L}) $;
\item $\deg {\cal L}  \leqslant 2g - \textrm{Cliff} (C) +1$;
\item $\mathrm{codim}_{H^0({\cal L})} V <h^1({\cal L})+g/(\dim V-2)$;
\item  $\deg {\cal L} \geqslant 2g$,
and $ \mathrm{codim}_{H^0({\cal L})} V \leqslant (\deg {\cal L} -2g)/2 $.
\end{enumerate}
\end{theorem}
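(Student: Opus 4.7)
The implication ``$M_{V,\cL}$ is $\mu$-(semi)stable $\Rightarrow$ $(C,V)$ is linearly (semi)stable'' is exactly Proposition \ref{implicazione facile}. The plan is to establish the converse, in each of the four listed cases, by contradiction. I would assume $(C,V)$ linearly (semi)stable and suppose that $M := M_{V,\cL}$ admits a proper saturated subsheaf $\cG$ violating $\mu$-(semi)stability. Building the diagram (\ref{booo}) with $W^* := \mathrm{im}\bigl(V^* \to H^0(\cG^*)\bigr)$, the resulting quotient bundle $\cF := (W\otimes\cO_C)/\cG$ has no trivial summand, carries a non-zero map $\alpha\colon \cF \to \cL$, and satisfies $\deg \cF = -\deg \cG$, $\mathrm{rk}\,\cF = \dim W - \mathrm{rk}\,\cG$, and $h^0(\cF) \geq \dim W$ (the last inequality from $H^0(M) = 0 \Rightarrow H^0(\cG) = 0$).

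The easy case $\mathrm{rk}\,\cF = 1$ is precisely the one already flagged in the discussion after diagram (\ref{booo}): then $\alpha$ is injective, $\overline\cL := \cF \hookrightarrow \cL$, and $(W,\overline\cL)$ is a sub-linear series of $(V,\cL)$. Its slope $\deg\overline\cL/(\dim W - 1)$ is exactly the translation into $g^{r'-1}_{d'}$-language of the destabilizing slope of $\cG$, so linear (semi)stability of $(C,V)$ contradicts the destabilizing inequality on $\cG$. The substantive content of the theorem, and the reason the four numerical hypotheses enter, lies in excluding the higher-rank case $\mathrm{rk}\,\cF \geq 2$.

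To rule out $\mathrm{rk}\,\cF \geq 2$, I would invoke a Clifford-type inequality for vector bundles on $C$. After replacing $\cF$ by a $\mu$-semistable quotient preserving both the non-zero map to $\cL$ and the lower bound on $h^0(\cF)$, one applies an inequality of the shape $h^0(\cF) \leq \tfrac{1}{2}\deg \cF + \mathrm{rk}\,\cF$ in the special range, refined using $\mathrm{Cliff}(C)$, and collapsing to Riemann--Roch in the non-special range. Combining this with the lower bound $h^0(\cF) \geq \mathrm{rk}\,\cF + \mathrm{rk}\,\cG$, the destabilizing inequality $\mu(\cG) \geq -\deg\cL/(\dim V - 1)$, and the standing hypothesis $\deg\cL - 2(\dim V - 1) \leq \mathrm{Cliff}(C)$, forces a numerical contradiction in each of the four regimes: case (1) plugs in $V = H^0(\cL)$ so that the Clifford bound for $\cF$ becomes incompatible with the one computing $\mathrm{Cliff}(C)$ via $\cL$; case (2) puts $\cL$ itself in the special range, so the Clifford inequality collapses slopes and conflicts with the absence of trivial summands in $\cF$; cases (3) and (4) use the codimension hypothesis to control how many independent sections $\cF$ can acquire beyond those seen by $V$, ruling out $\mathrm{rk}\,\cF \geq 2$ by direct counting.

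The main obstacle, as in all arguments of Butler type, is to secure a Clifford-type inequality for $\cF$ sharp enough to be sensitive to $\mathrm{Cliff}(C)$ and to the precise numerics $\deg\cL$ and $\dim V$, while performing the reduction to the $\mu$-semistable case without degrading the lower bound on $h^0(\cF)$. Once this technical step is in hand, the case analysis (1)--(4) is essentially bookkeeping of Riemann--Roch and of the destabilizing inequality.
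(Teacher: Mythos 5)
First, a point of calibration: the paper does not actually prove this statement --- it is imported from \cite{MS} --- and everything in your first two paragraphs (the easy direction via Proposition \ref{implicazione facile}, the diagram (\ref{booo}) with $W^*=\mathrm{im}(V^*\to H^0(\cG^*))$, the observation that the case $\mathrm{rk}\,\cF=1$ reduces to Remark \ref{la cosa bella} and linear stability) is already contained verbatim in the discussion the paper gives immediately before quoting the theorem. Your proposal therefore stands or falls on the third paragraph, and that is where there is a genuine gap: the entire content of the theorem --- why a destabilizing $\cG$ with $\mathrm{rk}\,\cF\geq 2$ is excluded precisely in cases (1)--(4) under the hypothesis $\deg\cL-2(\dim V-1)\leq \mathrm{Cliff}(C)$ --- is delegated to an unspecified ``Clifford-type inequality for vector bundles, refined using $\mathrm{Cliff}(C)$,'' which is neither stated nor proved. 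The ordinary Clifford bound $h^0(\cF)\leq \tfrac{1}{2}\deg\cF+\mathrm{rk}\,\cF$ for semistable bundles in the special range carries no information about $\mathrm{Cliff}(C)$, and a higher-rank refinement governed by the rank-one Clifford index is exactly the kind of statement that cannot be assumed. Your proposed reduction is also flawed as stated: replacing $\cF$ by a $\mu$-semistable quotient does not preserve the lower bound $h^0(\cF)\geq \dim W$ (quotients lose sections), and there is no reason the non-zero map to $\cL$ survives that same quotient.

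The mechanism that makes such an argument sensitive to the \emph{rank-one} invariant $\mathrm{Cliff}(C)$ --- and the step your sketch is missing --- is to exploit global generation of $\cF$ directly rather than semistability: since $\cF$ is generated by $W$ and has no trivial summands, a general choice of $\mathrm{rk}\,\cF-1$ sections yields an exact sequence $0\to \cO_C^{\oplus(\mathrm{rk}\,\cF-1)}\to\cF\to\det\cF\to 0$, so the \emph{line bundle} $\det\cF$, of degree $-\deg\cG$, carries at least $h^0(\cF)-(\mathrm{rk}\,\cF-1)\geq \mathrm{rk}\,\cG+1$ sections; one then applies the honest Clifford index of $C$ to $\det\cF$ and plays the resulting inequality off against the destabilizing inequality for $\cG$ and the standing hypothesis. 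The four numerical conditions enter to guarantee that $\det\cF$ genuinely contributes to $\mathrm{Cliff}(C)$ (e.g.\ has $h^1\geq 2$) or to dispose of the cases where it does not, and none of your one-line descriptions of cases (1)--(4) engages with $h^1(\cL)$ or $\mathrm{codim}_{H^0(\cL)}V$ concretely enough to be checkable. As written, your argument reproduces the setup the paper already supplies and stops exactly where the real proof begins.
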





\section{Results in higher dimensions}\label{resultsHIGH}
\subsection{Linear stability in higher dimensions and Xiao's method}
Mumford's original definition of linear stability is in any dimension, as follows.
\begin{definition}[\cite{Mum}, Definition 2.16]
An $m$-dimensional variety of degree $d$ in $\pr^{r-1}$ is linearly semistable (resp. linearly stable) if for any projection $\pi\colon \pr^{r-1}\dasharrow \pr^{s-1}$ such that the image of $X$ is still of dimension $m$, the following inequality holds:
$$
\frac{\deg(\pi_*(X))}{s-m}\geq \frac{d}{r-m} \qquad (resp. \, >),
$$
where $\pi_*(X)$ denotes the image cycle of $X$ in $\pr^s$.
\end{definition}
For example, it easy to verify that a K3 surface with Picard number $1$ is linearly semistable.
However, as Mumford himself remarks some lines after the definition, this condition in dimension higher than $1$ seems to be difficult to handle.
Moreover, it does not imply anymore  Hilbert or Chow stability.

It seems  that  there is no sensible connection between linear stability and the method of Cornalba-Harris.

The relation of linear semistability with $f$-positivity via Xiao's method appears clear enough when all the induced maps of the Harder-Narashiman pieces are generically finite onto its image. More concretely, we obtain an inequality very close to $f$-positivity (it is possible to get something slightly better with much more effort).

\begin{proposition}\label{Xiao-high}
 Let $f:X \longrightarrow B$ be a fibration with general fibre $F$, $n={\rm dim}X$ and $\cL$ a nef line bundle. Let $\cG \subseteq f_*\cL$ be a nef subbundle. Assume that all the induced maps on $F$ by the Harder-Narashimann pieces of $\cG$ are generically finite and that the one induced by $\cG$ is (Mumford-)linearly semistable. Then
$$L^n  \geq n\frac{d}{r+(n-1)^2}{\rm deg}\cG.$$

\end{proposition}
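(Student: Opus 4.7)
The plan is to apply the generalized Xiao inequality of Proposition \ref{xiaoinequality} twice---once with the full index set $I=\{1,\ldots,l\}$ and once with the minimal one $I'=\{1,l\}$---and then to eliminate the unknown slope $\mu_1$ between the two resulting inequalities, in direct analogy with the dimension-two argument of Theorem \ref{xiaoconLS}. The hypothesis that every $\phi_j$ is generically finite forces $\dim\phi_j(\widetilde F)=n-1$, so $I_{n-1}=I$ while every other $I_s$ is empty; the complicated expression of Proposition \ref{xiaoinequality} thus collapses to
\[
L^n \;\geq\; \sum_{j\in I}\Bigl(\sum_{r=0}^{n-1} P_j^{\,n-1-r} P_{j+1}^{\,r}\Bigr)(\mu_j-\mu_{j+1}),
\]
with the usual conventions $P_{l+1}=P_l$ and $\mu_{l+1}=0$.

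Two estimates will feed this formula. First, the chain $\cG_1\subset\cdots\subset\cG_l=\cG$ produces a chain of moving parts with $M_{j+1}-M_j$ effective on the common resolution, hence $P_{j+1}-P_j$ is an effective (and, together with $P_j$, nef) divisor on $\widetilde F$; this yields the uniform lower bound $P_j^{\,n-1-r}P_{j+1}^{\,r}\geq P_j^{\,n-1}=:d_j$. Second, the composition $\phi_j=\pi_j\circ\phi$ exhibits $\phi_j$ as a generically finite projection of $\phi$, so Mumford's linear semistability of $(F,\cG)$ gives $d_j\geq a(r_j-(n-1))$ with $a:=d/(r-(n-1))$. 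Summing over $I=\{1,\ldots,l\}$ and using the telescoping identities $\sum r_j(\mu_j-\mu_{j+1})=\deg\cG$ and $\sum(\mu_j-\mu_{j+1})=\mu_1$ will yield the first of my two key inequalities,
\[
L^n \;\geq\; na\bigl(\deg\cG-(n-1)\mu_1\bigr).
\]
For the second, I would take $I'=\{1,l\}$: the $j=l$ contribution equals $nd\mu_l$, while the $j=1$ contribution $\sum_{r=0}^{n-1}P_1^{\,n-1-r}P_l^{\,r}\,(\mu_1-\mu_l)$ is at least $d(\mu_1-\mu_l)$ because the $r=n-1$ term alone equals $P_l^{\,n-1}=d$ and the rest are nonnegative. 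This gives $L^n\geq d\mu_1+(n-1)d\mu_l\geq d\mu_1$, the last step using $\mu_l\geq 0$ (which follows from the nefness of $\cG$).

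The conclusion is then pure algebra: substituting $\mu_1\leq L^n/d$ from the second inequality into the first and rearranging gives $(d+na(n-1))L^n\geq nad\deg\cG$, and plugging in $a=d/(r-(n-1))$ simplifies the coefficient $nad/(d+na(n-1))$ to exactly $nd/(r+(n-1)^2)$, which is the claimed bound. The main delicate point I anticipate is the choice of estimates: the inequality $P_j^{\,n-1-r}P_{j+1}^{\,r}\geq d_j$ is quite crude---the sharper estimate $(n-1)d_j+d_{j+1}$ is available and would improve the final coefficient---but this crudeness is exactly what matches the clean form of $r+(n-1)^2$ and yields a closed-form statement, consistent with the author's parenthetical remark that more effort produces a slightly better inequality.
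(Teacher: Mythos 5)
Your proof is correct and follows essentially the same route as the paper: Xiao's inequality with the full index set combined with linear semistability gives $L^n \geq na\bigl(\deg\cG-(n-1)\mu_1\bigr)$ with $a=d/(r-n+1)$, and eliminating $\mu_1$ via $L^n\geq d\mu_1$ yields the stated bound. The only divergence is that you derive $L^n\geq d\mu_1$ from a second application of Xiao's inequality with $I'=\{1,l\}$ (using $\mu_l\geq 0$ from the nefness of $\cG$), whereas the paper obtains it directly from the pseudoeffectivity of $L-\mu_1F$ together with the nefness of $L$; both are valid, and your closing algebra lands exactly on the coefficient $nd/(r+(n-1)^2)$.
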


\begin{proof} A similar argument as in Theorem \ref{xiaoconLS} applies. Let $a=d/(r-n+1)$, where $d$ and $r$ are the degree and rank of the base-point free linear map induced on (a suitable blow-up of) $F$ by $\cG$. By Xiao's inequality, using that all the induced maps on fibres are generically finite onto their images and that  $P_{i+1}^kP_i^{r-1-k} \geq P_i^{r-1}$for all $i$, we obtain

$$L^n \geq n(\sum_i P_i^{n-1}(\mu_i-\mu_{i+1}))\geq \sum_i (nar_i-n(n-1)a)(\mu_i-\mu_{i+1})=na{\rm deg}\cG-n(n-1)a\mu_1.$$

\noindent Since $L-\mu_1F$ is pseudoeffective and $L$ is nef we have that $L^{n-1}(L-\mu_1F)\geq 0$ and so $$L^n\geq \mu_1d,$$
\noindent which finally gives

$$L^n \geq \frac{nad}{a+d}{\rm deg}\cG \geq n\frac{d}{r+(n-1)^2}{\rm deg}\cG.$$
\end{proof}

Clearly, the argument above does not work in general for dim$X \geq 3$, due to the presence of induced map on fibres which are not generically finite. In some situations, however, it is possible to control such maps and conclude again $f$-positivity. In \cite{BS-K3} we do this analysis for families of $K3$ surfaces, obtaining a significant generalization of Proposition \ref{slopeK3} below.





\subsection{New inequalities and conjectures via the C-H method}\label{CH-high}
We now state a couple of  new results obtained via the CH method, using known stability results in dimension $\geq 2$, and make some speculation and natural conjectures.

\subsubsection*{Families of abelian varieties}
Let us consider a fibred variety $f\colon X\longrightarrow B$ of dimension $n$ such that the general fibre is an abelian variety.
Suppose that $\cL$ is a line bundle on $X$ such that $ f_*\cL$  is either generating, or it satisfies the conditions of Proposition \ref{conto1,5} or  \ref{conto2}.
\begin{proposition}
Under the above assumption, suppose that $\cL$ is very ample on the general fibre. Then  $(\cL,\cG)$ is $f$-positive,
i.e.
\begin{equation}\label{slopeABELIAN}
L^n \geq n! \deg f_*\cL.
\end{equation}
\end{proposition}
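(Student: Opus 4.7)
The plan is to apply the Cornalba--Harris / Bost machinery (Theorem~\ref{corCH} or Theorem~\ref{teobost}) after identifying the correct stability input on the general fibre, and then to translate $f$-positivity into the explicit numerical inequality by means of Riemann--Roch on abelian varieties.

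First, I would verify that for general $t\in B$ the fibre $G_t=f_*\cL\otimes\kapp(t)$ equals the full space $H^0(F,\cL_{|F})$ of sections. Since $\cL_{|F}$ is very ample on the abelian variety $F$, it is in particular ample, so $h^i(F,\cL_{|F})=0$ for $i\geq 1$ by the vanishing theorem for ample line bundles on abelian varieties; cohomology and base change then yields $G_t=H^0(F,\cL_{|F})$, and moreover the map $F\to \mathbb P(G_t^\vee)$ induced by $G_t$ is just the (very ample) complete embedding of $F$.

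Second, I would invoke Kempf's theorem (quoted in the list of stability results in~\ref{risultati-hilbert-stability}) asserting that an abelian variety embedded by a complete linear system is Chow semistable, hence by Remark~\ref{Chow-Hilbert} also Hilbert semistable. Together with the hypothesis that $\cG=f_*\cL$ is either generating or satisfies the assumptions of Proposition~\ref{conto1,5} or~\ref{conto2}, the hypotheses of Theorem~\ref{corCH} (or, alternatively, those of Bost's Theorem~\ref{teobost} since $\cL$ is very ample on $F$, hence relatively nef in a neighbourhood of the general fibre, and we may twist by a pullback from $B$ to achieve nefness without changing $e(\cL,\cG)$ by Remark~\ref{class-invariant}) are satisfied. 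Consequently $(\cL,f_*\cL)$ is $f$-positive, i.e.
$$
r L^n \;\geq\; n\,(\deg f_*\cL)\,(L_{|F})^{n-1}.
$$

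Finally, I would make this inequality explicit using Riemann--Roch on the abelian variety $F$ of dimension $n-1$. Since higher cohomology of $\cL_{|F}$ vanishes,
$$
r=h^0(F,\cL_{|F})=\chi(F,\cL_{|F})=\frac{(L_{|F})^{n-1}}{(n-1)!},
$$
that is, $(L_{|F})^{n-1}=r\,(n-1)!$. Substituting into the $f$-positivity inequality, the factor $r$ cancels and yields
$$
L^n\;\geq\; n\cdot (n-1)!\cdot \deg f_*\cL\;=\; n!\,\deg f_*\cL,
$$
which is the claimed bound. The main (and only) non-routine point is citing the correct GIT stability for abelian varieties in complete embeddings; everything else is bookkeeping with Riemann--Roch and a direct application of the general machine already set up in the paper.
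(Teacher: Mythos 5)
Your proposal is correct and follows essentially the same route as the paper: Kempf's semistability result for abelian varieties embedded by complete linear systems feeds into Theorem~\ref{corCH} to give $f$-positivity, and Riemann--Roch on the abelian fibre (with vanishing of higher cohomology) converts $r=h^0(F,\cL_{|F})=(L_{|F})^{n-1}/(n-1)!$ into the stated inequality $L^n\geq n!\deg f_*\cL$. The extra details you supply (identification of $G_t$ with the full space of sections, the alternative via Bost) are consistent with, but not needed beyond, the paper's two-line argument.
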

\begin{proof}
We can apply Theorem \ref{corCH} because the immersion induced by $\cL_{|F}$ on the general fibre $F$  is Hilbert semistable by Kempf's result \cite{kempf}. Observe then that as $F$ is abelian, and $\cL$ is very ample, we have that $h^0(F, \cL_{|F})=\chi (\cL_{|F})= L_{|F}^{n-1}/(n-1)!$, and so $f$-positivity translates in formula (\ref{slopeABELIAN}).
\end{proof}

\subsubsection*{Families of K3 surfaces}
Let $f\colon T\longrightarrow B$ be a fibred threefold such that the general fibre is a $K3$ surface of genus $g$.
Let $\cL$ be a line bundle on $T$ such that $f_*\cL$ is either generating, or it satisfies the conditions of Proposition \ref{conto1,5} or  \ref{conto2}.
The following result follows right away from  Theorem \ref{corCH} applying Morrison's result \cite{morrison-K3}.
\begin{proposition}\label{slopeK3}
In the above situation, suppose that the general fibres $F$ have Picard number $1$, that $\cL_{|F}$ is the primitive divisor class, and that its degree is at least $12$.
Then $\cL$ is $f$-positive, {\em i.e.} the following inequality holds:
$$
L^3\geq 6\frac{g-1}{g+1}\deg f_*\cL.
$$
\end{proposition}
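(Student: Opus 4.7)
The plan is to apply Theorem \ref{corCH} combined with Morrison's Hilbert semistability result for $K3$ surfaces \cite{morrison-K3}, and then translate the resulting $f$-positivity into the numerical inequality via Riemann--Roch on the general fibre.

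First I would verify the hypothesis of Theorem \ref{corCH}. The technical conditions on $f_*\cL$ (being generating, or satisfying the assumptions of Proposition \ref{conto1,5} or \ref{conto2}) are granted by the setup preceding the statement. The remaining condition is Hilbert semistability of the fibre: by Morrison's theorem, any polarized $K3$ surface $(F,\cL_{|F})$ with Picard number one and $(\cL_{|F})^2 \geq 12$ has semistable Hilbert point. Since we are assuming precisely this on a general fibre (the primitive class has degree $\geq 12$), Theorem \ref{corCH} applies and yields $f$-positivity of $(\cL,f_*\cL)$, i.e.
$$e(\cL, f_*\cL) \;=\; \rk(f_*\cL)\, L^3 \;-\; 3 \deg(f_*\cL)\, (L_{|F})^{2} \;\geq\; 0.$$

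Next I would compute the numerical quantities in terms of the genus $g$. By definition of the genus of a polarized $K3$, $(L_{|F})^2 = 2g-2$. Since $F$ is a $K3$ surface and $\cL_{|F}$ is primitive of degree $2g-2\geq 12 > 0$, Riemann--Roch together with the vanishing $h^1(F,\cL_{|F})=h^2(F,\cL_{|F})=0$ (which holds for an ample line bundle on a $K3$, e.g.\ by Kodaira vanishing applied to $\cL_{|F}$ and Serre duality) gives
$$\rk(f_*\cL) \;=\; h^0(F,\cL_{|F}) \;=\; \chi(F,\cL_{|F}) \;=\; \frac{(L_{|F})^2}{2}+2 \;=\; g+1.$$
Substituting into the $f$-positivity inequality yields
$$(g+1)L^3 \;\geq\; 3(2g-2)\deg f_*\cL \;=\; 6(g-1)\deg f_*\cL,$$
which is the desired bound after dividing by $g+1 > 0$.

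The only delicate point is the verification of the Morrison hypothesis on the \emph{general} fibre rather than on every fibre: Hilbert semistability is an open condition in families, so once we know it holds at the generic point (which is a $K3$ of Picard number one and primitive degree $\geq 12$ by assumption), it holds on an open set of $B$, which is all we need to invoke Theorem \ref{corCH}. No further ingredients are required, so the argument is essentially a direct packaging of the Cornalba--Harris machinery in our threefold setting with Morrison's GIT result as input.
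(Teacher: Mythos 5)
Your proof is correct and follows the same route as the paper, which simply invokes Theorem \ref{corCH} together with Morrison's semistability result for $K3$ surfaces of Picard number one and degree at least $12$; your explicit Riemann--Roch computation $\rk f_*\cL = h^0(F,\cL_{|F}) = g+1$ and $(L_{|F})^2 = 2g-2$ is exactly the translation the paper leaves implicit. (The closing remark about openness of semistability is unnecessary: Theorem \ref{corCH} only asks for semistability of the fibre over \emph{general} $t$, which Morrison's theorem gives directly under the stated hypotheses.)
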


\begin{remark}
It is interesting to notice that the bound $6(g-1)/(g+1)$ appearing in the inequality of Proposition \ref{slopeK3} coincides with the one obtained in \cite{konnocliff} and in \cite{HE} for the canonical slope of  fibred surfaces of odd degree $g$ whose general fibre is of maximal gonality.

Moreover, it is easy to prove that the canonical slope of a family of curves contained in a {\em fixed} $K3$ surface is indeed bounded from below by $6(g-1)/(g+1)$.
\end{remark}

\subsubsection*{A conjecture on the slope inequality in higher dimension}

Let $f\colon X\longrightarrow B$  be an $n$-dimensional fibred variety.
It is natural to define as a possible canonical slope the ratio between $K_f^n$ and $\deg f_*\omega_f$, another possibility being to use the relative characteristic $\chi_f$: in higher dimension the values  $\deg f_*\omega_f$ and $\chi_f$ are not equal, but it holds an inequality between them \cite{ohno} \cite{Barja3folds}.

A natural slope inequality in higher dimension would be the following
\begin{equation}\label{ineq-slope-high}
K_f^n\geq n\frac{K_F^{n-1}}{h^0(F, \omega_F)}\deg f_*\omega_f,
\end{equation}
which is equivalent to the $f$-positivity of $\omega_f$.
From the Cornalba-Harris and Bost method we can derive inequality (\ref{ineq-slope-high}) any time we have a Hilbert-Chow semistable canonical map on the general fibres.
Although there are not much general results, it seems natural in the framework of GIT to conjecture that the stability of a variety has a connection with its singularities: a stable or asymptotically stable variety has mild singularities and it seems that also a vice-versa to this statement should hold.
In consideration of this fact, and in analogy with the case of curves, it seems natural to state the following conjecture. See also Remark \ref{slopeinequality} for an account the natural positivity conditions on $\omega_f$.
\begin{conjecture}\label{conj-slope}
Let   $f\colon X\longrightarrow B$  be a fibred $n$-dimensional  variety whose relative canonical sheaf $\omega_f$ is relatively nef and ample on the general fibres, and whose general fibres have sufficiently mild singularities (e.g. they are log canonical, or semi-log-canonical).
Then the fibration  satisfies the slope inequality (\ref{ineq-slope-high}).
\end{conjecture}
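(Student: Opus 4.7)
The plan is to apply the Cornalba--Harris theorem (Theorem~\ref{corCH}) to the couple $(\omega_f,\,f_*\omega_f)$. A direct computation from the definition gives
\[
e(\omega_f, f_*\omega_f) \;=\; h^0(F,\omega_F)\,K_f^{\,n} - n\,\deg f_*\omega_f\cdot K_F^{\,n-1},
\]
so that $f$-positivity of $(\omega_f,f_*\omega_f)$ is precisely the slope inequality~\eqref{ineq-slope-high}. The relative nefness of $\omega_f$ supplies condition~(3) of Theorem~\ref{teobost}, and by Remark~\ref{nef-rel-nef} one can twist by the pullback of a sufficiently ample line bundle on $B$ to upgrade this to absolute nefness without changing~$e$. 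Since $\omega_F$ is ample on a general fibre, the base locus of $f_*\omega_f$ is forced to sit on fibres, so the vertical-base-locus hypothesis of Proposition~\ref{conto1,5} is verified.

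The decisive step, and the genuine obstacle, is the GIT input fed into Theorem~\ref{corCH}: one must prove that for a general fibre $F$ the couple $(F,\,H^0(F,\omega_F))$ has generalized $m$-th Hilbert point semistable for infinitely many $m\geq 1$, which by Remark~\ref{hilbfinito} amounts to Hilbert semistability of the $m$-canonical image of~$F$ inside $\pr H^0(F,\omega_F^{\otimes m})^{\vee}$ for $m\gg 0$. For $n=2$ this is the classical Mumford--Gieseker theorem on the asymptotic semistability of $m$-canonical Deligne--Mumford stable curves ($m\geq 5$), which is exactly how the usual slope inequality is extracted from Theorem~\ref{corCH} in the first place. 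In dimension $n\geq 3$, producing this semistability from an slc/lc hypothesis on $F$ is a deep open problem, closely tied to K-stability: by the Donaldson--Odaka correspondence, K-semistability of a canonically polarized variety implies asymptotic Chow (and hence generalized Hilbert) semistability, and it is widely expected --- and partially established by Odaka, Wang--Xu and others --- that KSBA-stable varieties of general type with $K_F$ ample and semi-log canonical singularities are K-semistable. The conjecture is in effect equivalent to making this GIT semistability statement unconditional.

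Once the GIT semistability is granted, Theorem~\ref{CoHa} asserts that the line bundles $\cL_m$ on~$B$ are pseudoeffective for infinitely many~$m$, which on a curve base simply reads $\mu(\cG_m)\geq m\,\mu(\cG)$ for $\cG=f_*\omega_f$. This is exactly the hypothesis of Proposition~\ref{conto1,5}, whose remaining assumptions have already been arranged, so its conclusion yields $f$-positivity of $(\omega_f, f_*\omega_f)$, i.e.\ inequality~\eqref{ineq-slope-high}. Alternatively one may pass to the relative canonical model $X\dashrightarrow X_{\mathrm{can}}\to B$ (allowed by the mild-singularities hypothesis) and apply Proposition~\ref{conto2}: there condition~$(\star)$ comes from Serre vanishing on the canonical model, and the required nefness of $f_*\omega_f^{\otimes h}$ for $h\gg 0$ follows from Viehweg's weak positivity theorem. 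Apart from this essentially routine bookkeeping, the entire weight of the conjecture really rests on the GIT semistability step in dimension $\geq 3$.
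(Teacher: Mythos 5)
The statement you were given is Conjecture \ref{conj-slope}, and the paper does not prove it --- the authors state explicitly that almost nothing is known about it in dimension greater than $2$. What you have written is therefore not a proof but a reduction of the conjecture to another open problem, and you concede as much when you write that the conjecture ``is in effect equivalent to'' the GIT semistability statement you need. Your surrounding bookkeeping is consistent with the authors' intentions: they themselves remark, just before stating the conjecture, that inequality (\ref{ineq-slope-high}) follows from the Cornalba--Harris or Bost method whenever the canonical map of a general fibre is Hilbert or Chow semistable, and your verification that relative nefness of $\omega_f$ plus ampleness of $\omega_F$ on the general fibre put you in the situation of Theorem \ref{corCH} via Proposition \ref{conto1,5} and Remark \ref{nef-rel-nef} is fine. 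But the ``decisive step'' is the entire content of the conjecture in this approach, and it is established nowhere, so there is a genuine and unfilled gap.

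Two of your supporting claims are also inaccurate. First, the generalized $m$-th Hilbert point of $(F, H^0(F,\omega_F))$ is, by Remark \ref{hilbfinito}, the $m$-th Hilbert point of the \emph{canonical} image of $F$; it is not the Hilbert point of the $m$-canonical image. Hence the Mumford--Gieseker asymptotic stability of $m$-canonically embedded Deligne--Mumford stable curves is the input for the couple $(\omega_f^{\otimes m}, f_*\omega_f^{\otimes m})$, not for $(\omega_f, f_*\omega_f)$; the paper's own proof of the slope inequality (Example \ref{slopeCH}) instead uses Chow and Hilbert semistability of the canonical system on the general \emph{smooth} fibre. Relatedly, the paper warns at the end of Section \ref{risultati-hilbert-stability} that asymptotic semistability without an effective bound on the power only yields $L^n\geq 0$ through this machine. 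Second, the implication you cite from K-semistability to asymptotic Chow (hence Hilbert) semistability goes the wrong way: the known theorems give that asymptotic Chow semistability implies K-semistability, while the converse is open and expected to fail in general; the Donaldson-type results that do produce asymptotic Chow stability require a cscK metric and discrete automorphisms and are not available for singular canonically polarized fibres. So even the route you sketch for supplying the missing GIT input does not close.
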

Almost nothing is known about this conjecture in dimension higher than $2$. In \cite{hyper} we prove this inequality for families of hypersurfaces whose general fibres satisfy a very weak singularity condition expressed in terms of its log canonical threshold and depending upon the degree of the hypersurfaces (see \cite{Lee}).

\begin{remark}
Recall that the Severi inequality for surfaces $S$ of maximal Albanese dimension $
K_X^2\geq 4 \chi(\cO_X)
$
has been proved in full generality by Pardini in \cite{Parda}.
In \cite{Severi} the first author proves that higher dimensional Severi inequalities of the form $L^n\geq 2n!\chi (\cL)$ hold n arbitrary dimensions for any nef line bundle $\cL$.
The classical proof of Severi inequality for surfaces and $\cL=\omega_S$ given by Pardini makes use of the  slope inequality for fibred surfaces.
We prove now that her argument can be generalized, assuming that  Conjecture \ref{conj-slope} holds.
\end{remark}

\begin{proposition}\label{slope-severi}
Let $m>0$ be an integer.
Suppose that slope inequality (\ref{ineq-slope-high}) holds for all varieties of dimension $\leq m$ that have  maximal Albanese dimension and are fibred over  $\mathbb{P}^1$.
Then for any variety $X$ of dimension $n\leq m$ with maximal Albanese dimension it holds the following sharp Severi inequality:
\begin{equation}\label{S}
K_X^n\geq 2n! \chi(\omega _X).
\end{equation}
\end{proposition}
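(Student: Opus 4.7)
My plan is to follow Pardini's strategy for surfaces \cite{Parda} and promote it to higher dimension by induction on $n$. The base case $n=2$ is Pardini's theorem; for $n\geq 3$, I would assume that the Severi inequality (\ref{S}) already holds in all dimensions strictly less than $n$ and derive it for a variety $X$ of dimension $n$ with maximal Albanese dimension.

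First, let $a\colon X\to A$ be the Albanese morphism, set $q=\dim A\geq n$, and for every positive integer $d$ form the étale cover $\pi_d\colon X_d\to X$ by pulling back $X$ along the multiplication-by-$d$ isogeny $[d]\colon A\to A$. Then $\pi_d$ has degree $d^{2q}$, so $K_{X_d}^n=d^{2q}K_X^n$ and $\chi(\omega_{X_d})=d^{2q}\chi(\omega_X)$, and $X_d$ still has maximal Albanese dimension. Next, I would fix a very ample line bundle $\cH$ on $A$ and a general pencil $\Lambda\subset|\cH|$; blowing up the base locus of $\Lambda$, pulling back to $X_d$ via $[d]\circ a\circ \pi_d$, and resolving indeterminacies yields a fibration $f_d\colon Y_d\to\pr^1$ together with a birational morphism $\sigma_d\colon Y_d\to X_d$. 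Every general fibre $F_d$ of $f_d$ is birational to the preimage in $X_d$ of a smooth member $H\in\Lambda$, and the composition $F_d\to H\hookrightarrow A$ factors through a generically finite map to the abelian variety $A/\langle H\rangle$ of dimension $q-1\geq n-1$, so $F_d$ has maximal Albanese dimension as well.

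By the hypothesis of the proposition, $f_d$ satisfies the slope inequality (\ref{ineq-slope-high}), and by the inductive hypothesis the Severi inequality gives $K_{F_d}^{n-1}\geq 2(n-1)!\,\chi(\omega_{F_d})$. Combining them yields
$$
K_{f_d}^n\;\geq\; 2\,n!\,\frac{\chi(\omega_{F_d})}{h^0(F_d,\omega_{F_d})}\,\deg (f_d)_*\omega_{f_d}.
$$
The closing step is to compare both sides with the invariants of $X$ as $d\to\infty$: the left-hand side equals $d^{2q}K_X^n$ modulo contributions coming from the birational modification $\sigma_d$ and from the twist $\omega_{Y_d}=\omega_{f_d}\otimes f_d^*\omega_{\pr^1}$, which are of order at most $d^{2q-1}$ because the relevant intersection numbers are pulled back from a fixed divisor on $A$; similarly $\deg(f_d)_*\omega_{f_d}=d^{2q}\chi(\omega_X)+O(d^{2q-1})$ via Leray applied to $f_d$. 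Finally, generic vanishing in the Green--Lazarsfeld sense, applied to the maximal Albanese dimension varieties $F_d$, forces $h^i(F_d,\omega_{F_d})=o(h^0(F_d,\omega_{F_d}))$ for $i>0$, so the ratio $\chi(\omega_{F_d})/h^0(F_d,\omega_{F_d})$ tends to $1$. Dividing by $d^{2q}$ and letting $d\to\infty$ then yields $K_X^n\geq 2\,n!\,\chi(\omega_X)$.

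The hard part will be the asymptotic bookkeeping in the last step: one has to control simultaneously the error in $K_{Y_d}^n$ arising from the sequence of blowups resolving the pencil, the Leray corrections in $\deg(f_d)_*\omega_{f_d}$, and the higher cohomologies $h^i(F_d,\omega_{F_d})$ for $i>0$. The latter point is the most delicate, since one must guarantee that generic vanishing remains uniformly effective as $d$ grows, and this in turn relies on $F_d$ inheriting a sufficiently positive Albanese image from $H\subset A$ for all large $d$.
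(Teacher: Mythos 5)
Your overall strategy---induction on $n$, the multiplication-by-$d$ \'etale covers pulled back from the Albanese, the pencil in $|\widetilde{a}^*\cH|$ resolved to a fibration over $\pr^1$, and the asymptotic comparison as $d\to\infty$---is exactly the one the paper uses, and the bookkeeping you describe for the covers themselves ($\deg=d^{2q}$, $\chi(\omega_{X_d})=d^{2q}\chi(\omega_X)$, fibre terms of lower order in $d$) is sound. The gap is concentrated in the step you yourself flag as delicate, and it is a missing idea rather than a bookkeeping issue. After combining the slope inequality with the inductive Severi inequality you are left with the factor $\chi(\omega_{F_d})/h^0(F_d,\omega_{F_d})$ multiplying the \emph{main} term $\deg(f_d)_*\omega_{f_d}\sim d^{2q}\chi(\omega_X)$, so you genuinely need this ratio to tend to $1$. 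Green--Lazarsfeld generic vanishing does not give this: it controls $h^i(F_d,\omega_{F_d}\otimes P)$ for \emph{generic} $P\in\mathrm{Pic}^0(F_d)$ and says nothing at the point $P=\cO_{F_d}$; for instance $h^{n-2}(F_d,\omega_{F_d})=h^1(F_d,\cO_{F_d})=q(F_d)$, which is not negligible and may well grow with $d$. Likewise, the Leray/Grothendieck--Riemann--Roch identity expresses $\chi_{f_d}$ as the alternating sum $\sum_i(-1)^i\deg R^i(f_d)_*\omega_{f_d}$, so the lower bound you need on $\deg(f_d)_*\omega_{f_d}$ requires \emph{upper} bounds on the even-indexed higher direct images, which neither semipositivity nor anything you invoke provides.

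The paper dissolves both difficulties \emph{before} running Pardini's trick, by a preliminary limiting argument that your proposal is missing: given a maximal-Albanese-dimension variety fibred over $\pr^1$, apply the hypothesised slope inequality to the fibrations induced on \'etale Galois covers of degree $r$, divide by $r$, and let $r\to\infty$. Since the leading terms scale like $r$ while the correction terms (higher cohomology of $\omega_F$ on the fibre, higher direct images on the base) are of lower order, this replaces $h^0(F,\omega_F)$ by $\chi(\omega_F)$ and $\deg f_*\omega_f$ by $\chi_f=\chi(\omega_X)+2\chi(\omega_F)$; combined with the induction on $F$ it yields the clean intermediate inequality (\ref{slopemejor}). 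Once (\ref{slopemejor}) is available, the final $d\to\infty$ step uses only the exact identity $\chi(\omega_{Y})=d^{2q}\chi(\omega_X)$ together with the fact that every fibre contribution is $O(d^{2q-2})$, and no ratio of same-order quantities and no individual cohomology group has to be controlled. You should insert this $r\to\infty$ step (noting that it is also where the residual care lies, namely checking that the correction terms really are $o(r)$); as written, your limit argument does not close.
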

\begin{proof} We proceed by induction on $n={\rm dim}X$. For $n=1$ inequality (\ref{S}) is trivially true.
Take now $n\geq 2$. First of all observe that from the slope inequality we can deduce a stronger result for maximal Albanese dimensional varieties with fibrations $f\colon X\longrightarrow \pr^1$. Indeed, consider an \'etale Galois cover of $X$ of degree $r$, and the induced fibration ${\widetilde f}$.
Then, applying inequality (\ref{ineq-slope-high}) we obtain
$$
rK_f^n=K_{\widetilde f}^n \geq n\frac{rK_F^{n-1}}{r\chi(\omega_F)+\epsilon_1}(r\chi_f+\epsilon_2),
$$

\noindent where $\epsilon _1=(h^1(F,\omega_F)-...+(-1)^{n-2}h^{n-1}(F,\omega_F))$ and  $\epsilon _2=({\rm deg}R^1f_*\omega_f-...$ $...+(-1)^{n-2}R^{n-1}f_*\omega_f)$. Since the inequality holds for all $r$ we obtain
$$
K_F^n+2nK_F^{n-1}=K_f^n \geq n\frac{K_F^{n-1}}{\chi(\omega_F)}(\chi (\omega_X)+2\chi(\omega_F)).
$$
Applying induction hypothesis for $F$ (which is clearly of maximal Albanese dimension), we deduce the inequality
\begin{equation}\label{slopemejor}
K_F^n+2nK_F^{n-1}\geq 2n!(\chi (\omega_X)+2\chi(\omega_F)).
\end{equation}
Now we can ``eliminate the contribution due to $F$'' just mimetizing Pardini's argument in \cite{Parda}, which we sketch here.

Consider the following cartesian diagram
$$
\xymatrix { {\widetilde X} \ar[r]^{\mu} \ar[d]^{\widetilde a} & X \ar[d]^{a} \\ A \ar[r]^{\mu} & A}
$$
where $a\colon X\longrightarrow A$ is the Albanese map, and the maps $\mu$ are multiplication by $d$ in $A$ and so are Galois \'etale maps of degree $d^{2q}$. Fix a very ample line bundle $\cH$ on $A$ and let $\cM=a^*(\cH)$ and ${\widetilde \cM}={\widetilde a}^*(\cH)$.
By \cite[Ch2. Prop.3.5]{BirkLange} we have that
$$
{\widetilde M} \equiv \frac{1}{d^2}\mu ^*(M)\quad \mbox{ (numerical equivalence).}
$$

 Take general elements $F,F' \in |{\widetilde M}|$ and perform a blow-up $Y\longrightarrow X$  to obtain a fibration $f: Y \longrightarrow \mathbb{P}^1$. Then we apply (\ref{slopemejor}) to $f$ and obtain
$$
K^n_Y+2nK_F^{n-1} \geq 2n!(\chi( \omega_Y)+2\chi(\omega_F)).
$$
Now an easy computation through the blow-up and the \'etale cover $\mu$ shows that
\begin{itemize}
\item $K^n_Y=d^{2q}K_X^n+\cO(d^{2q-4})$.
\item $K_F^{n-1}=K_{\widetilde X}^{n-1}{\widetilde M}+(n-1)K_{\widetilde X}^{n-2}{\widetilde M}=\cO(d^{2q-2})$.
\item $\chi (\omega_Y)=\chi (\omega_{\widetilde X})=d^{2q}\chi( \omega_X)$.
\item $\chi (\omega_F)=\cO(d^{2q-2})$ by Riemann-Roch theorem on ${\widetilde X}$.
\end{itemize}
Since these equalities holds for any $d$ we conclude that
$$
K_X^n \geq 2n! \chi(\omega_X).
$$

\end{proof}







\end{document}